\newtheorem{theorem}{Theorem}[section]
\theoremstyle{plain}
\newtheorem{lemma}[theorem]{Lemma}
\newtheorem{proposition}[theorem]{Proposition}
\newtheorem{corollary}[theorem]{Corollary}
\theoremstyle{remark}
\newtheorem{definition}[theorem]{Definition}
\numberwithin{equation}{section}
\DeclareMathOperator{\supp}{supp}
\newcommand{\M}{\mathcal{M}}
\newcommand{\Complex}{\mathbb{C}}
\newcommand{\abs}[1]{\left\vert#1\right\vert}
\newcommand{\norme}[1]{\|#1\|_E}
\newcommand{\normcomm}[1]{\|#1\|_{E(\mathcal{M},\tau)}}
\newcommand{\nonsp}{E(\mathcal{M},\tau)}
\def\underset#1\to#2{\mathop{#2}\limits_{#1}{ }}
\def\overset#1\to#2{\mathop{#2}\limits^{#1}{ }}
\newcommand{\one}{\textup{\textbf{1}}}
\newcommand{\tauone}{\tau(\textup{\textbf{1}})}
\newcommand{\Mtau}{\left(\mathcal{M},\tau\right)}
\begin{document}
\title[Banach envelopes in symmetric spaces of measurable operators]
{Banach envelopes in symmetric spaces of measurable operators}

\author{M.M. Czerwi\'nska}
\address{ Department of Mathematics and Statistics, University of North Florida, Jacksonville, FL 32224} \email{m.czerwinska@unf.edu}

\author
{A. Kami\'nska}
\address{Department of Mathematical Sciences, The University of
Memphis, Memphis, TN 38152} \email{kaminska@memphis.edu}

\thanks {\emph{2010 subject classification}\ {46B20, 46B28, 46B40, 46E30, 46L52, 47L20}}

\keywords{Symmetric spaces of measurable operators, noncommutative function spaces, unitary matrix spaces, Banach envelopes, Mackey completion}

\maketitle
 \begin{abstract}
 We study Banach envelopes for commutative symmetric sequence or function spaces, and noncommutative symmetric spaces of measurable operators. 
 We characterize the class $(HC)$ of quasi-normed symmetric sequence or function spaces $E$ for which their Banach envelopes $\widehat{E}$ are also symmetric spaces. The class of symmetric spaces satisfying $(HC)$ contains but is not limited to order continuous spaces. 
      Let $\M$ be a non-atomic, semifinite von Neumann algebra with a faithful, normal, $\sigma$-finite trace $\tau$ and $E$ be as symmetric function space on $[0,\tauone)$ or symmetric sequence space. We compute Banach envelope norms on $\nonsp$ and $C_E$ for any quasi-normed symmetric space $E$. Then we show under assumption that $E\in (HC)$ that the Banach envelope $\widehat{\nonsp}$ of $\nonsp$ is equal to $\widehat{E}\Mtau$ isometrically. We also prove the analogous result for unitary matrix spaces $C_E$. \end{abstract}
\section{Preliminaries}

The concept of the Banach envelope was independently introduced by Peetre \cite{P} in 1974, and Shapiro \cite{Sh} in 1977.
Since then this concept have been studied by many authors in different spaces and settings. 
  In 1977, Kalton \cite{K1}  gave an isomorphic and  isometric representation of the Banach envelop of non-locally convex Orlicz spaces.  This result   was later generalized by Drewnowski in \cite{D} to Musielak-Orlicz spaces. In \cite{K2}, Kalton obtained the similar result in Orlicz sequence spaces $\ell_F$ and applied it to studies of isomorphic copies of $l_p$, $0<p<\infty$, in non-locally convex Orlicz spaces. This also inspired further  studies  in \cite{DN}, where a characterization of the Banach envelop of non-locally convex Orlicz spaces was obtained without the assumption of separability.

In the eighties Cwikel and  Fefferman in \cite{CF1980, CF1984} computed the Banach envelop semi-norm in the weak $L^1$ space called also Marcinkiewicz  space, and showed that the space equipped with  this semi-norm is not complete. In 2008, Kalton and Sukochev \cite{KS}  calculated the Banach envelop norm in the sequence Marcinkiewicz space $\ell_{1,\infty}$, and very recently more studies of this subject was continued by Pietsch in \cite{P}.

  It is also worth to notice that in \cite{Kal78}, Kalton proved that 
  if $X$ is a quasi-Banach space with separating dual then its Banach envelope has no type $p$ for any $p>1$.    It follows that for instance  the spaces $l_p$,  $L_p$ for $1<p<\infty$, or the Orlicz  spaces $\ell_\varphi$, $L_\varphi$ with the lower Orlicz-Matuszewska index $\alpha_\varphi >1$, cannot be Banach envelops of $X$.  In the papers \cite{kammast, kamlin} there have been given general descriptions of Banach envelopes for symmetric function spaces whose cones of decreasing or non-negative elements are $1$-concave. The latter two results have been applied for finding characterization of Banach envelops in Orlicz-Lorentz and weighted Ces\'aro spaces, respectively. 
  Finally in \cite{kamray}  there is a description of the Banach envelope of a Marcinkiewicz class in terms of Hardy-Littlewood submajorization.
  
  In this note we will study the Banach envelop of quasi-normed non-commutative spaces of measurable operators and the unitary matrix spaces \cite{A, DDP2, DDP4, noncomm}.  Our results, among others, will generalize and extend the characterization given by Nawrocki in \cite{naw},  where he investigated the  separable unitary matrix spaces.

Recall that a \textit{quasi-normed space} is a vector space $(X,\|\cdot\|_X)$ over a field of complex numbers $\mathbb{C}$ equipped with a function $\|\cdot\|_X:X\to \mathbb R$ satisfying
\begin{itemize}
\item[{(i)}] $\|x\|_X>0$ for all $x\neq 0$,
\item[{(ii)}] $\|\alpha x\|_X=\abs{\alpha} \|x\|_X$ for all $\alpha\in \Complex$ and all $x\in X$, and
\item[{(iii)}] there exists $C>0$ such that  $\|x+y\|_X\leq C\left(\|x\|_X+\|y\|_X\right)$ for all $x,y\in X$.
\end{itemize}

Denoting by $ \mbox{co} (B_X)$ the convex hull  of the unit ball $B_X$ in $X$, let $\|x\|_{\widehat{X}}=\inf\{\lambda>0: x/\lambda\in \mbox{co} (B_X)\}$ be the Minkowski functional of $B_X$.
Clearly $\|\cdot\|_{\widehat{X}}$ is a semi-norm, and if the dual $X^*$ separates the points of $X$, that is for every $0\ne x\in X$ there is $x^*\in X^*$ such that $x^*(x)\neq 0$, it is in fact a norm. Setting  $N =\{x\in X : \|x\|_{\widehat{X}}=0\}$, $X/N$  is a normed linear space, whose completion is called the \textit{Banach envelope} $\widehat{X}$ or $X\widehat{\phantom{x}}$ of $X$ \cite{KPR}. 

Obviously $\|x\|_{\widehat{X}}\leq \|x\|_X$ for any $x\in X$, and so the inclusion map $i:(X,\|\cdot\|_X)\to (X,\|\cdot\|_{\widehat{X}})$ is continuous with its range dense in $\widehat{X}$. 
Moreover, in view of the relation that $z\in \mbox{co}(B_X)$ if and only if $\|z\|_{\widehat{X}}\leq 1$, we have
\begin{equation}\label{eq:01}
\|x^*\|=\sup_{\|z\|_X\leq 1}|x^*(z)|=\sup_{z\in \mbox{co}(B_X)}|x^*(z)|=\sup_{\|z\|_{\widehat{X}}\leq 1}|x^*(z)|.
\end{equation}
Thus any $x^*\in X^*$ is bounded on $(X,\|\cdot\|_{\widehat{X}})$ as well and by the density it can be extended to $\widehat{X}$ with preservation of the norm. On the other hand, if $x^*\in \widehat{X}^*$ then its restriction to $X$ is bounded on $(X,\|\cdot\|_X)$.

Another way of introducing $\widehat{X}$ is through the canonical embedding $J$ of $X$ into its second dual $X^{**}$, defined as usual by $Jx(x^*)=x^*(x)$. If we assume that $X^*$ separates points in $X$ then $J$ is injective.
Moreover,
$\|J\|=\sup_{\|x\|_X\leq 1}\|Jx\|_{X^{**}}=\sup_{\|x\|_X\leq 1}\sup_{\|x^*\|\leq 1}|x^*(x)|\leq 1.$
Since the Hahn-Banach theorem is usually not satisfied in the spaces that are not locally convex, we do not have that $\|J\|=1$, so $J$ is not isometry.  The Banach envelope of $X$ can be taken to be the closure in $X^{**}$ of the image of $X$ under $J$. In fact since $\|\cdot\|_{\widehat{X}}$ is a norm, by the Hahn-Banach Theorem,
\[
\|x\|_{\widehat{X}}=\sup_{\|x^*\|\leq 1}|x^*(x)| =\|Jx\|_{X^{**}}.
\]
Therefore $\widehat{X}=\overline{J(X)}^{X^{**}}$  \cite{KPR}. 

The following very useful formula of the Banach envelope norm was introduced by Peetre in \cite{P}. For $x\in X$, set 
\[
\|x\|^*=\inf\Bigl\{\sum_{i=1}^n \|x_i\|_{X}:\sum_{i=1}^n x_i=x,\  x_i \in X, \ n\in\mathbb{N}\Bigr\}.
\]
It is standard to check that $\|\cdot\|^*$ is a seminorm on $X$  and
\begin{equation}
\label{eq:strongest}
\|x\|^*\leq \|x\|_X\quad \text{for every}\quad  x\in X.
\end{equation}
In fact $\|\cdot\|^*$ is the strongest seminorm for which (\ref{eq:strongest}) holds. That is, if $\|\cdot\|^0$ is another seminorm on $X$ satisfying $\|x\|^0\leq \|x\|_X$ for all $x\in X$, then $\|x\|^0\leq \|x\|^*$ for all $x\in X$. Indeed, if $x=\sum_{i=1}^n x_i$, $x_i\in X$, then $\|x\|^0=\|\sum_{i=1}^n x_i\|^0\leq \sum_{i=1}^n \|x_i\|^0\leq \sum_{i=1}^n \|x_i\|_X$, and so $\|x\|^0\leq \|x\|^*$.  Thus in particular, 
\begin{equation}
\label{eq:02}
\|x\|_{\widehat{X}}\leq \|x\|^*, \ \ \ x\in X.
\end{equation}
Applying (\ref{eq:strongest}) and the defintion of $\|\cdot\|^*$ one can  easily show that   the sets of  bounded functionals on $(X,\|\cdot\|^*)$ and on $(X,\|\cdot\|_X)$ coincide. Therefore
for any $x^*\in (X,\|\cdot\|_X)^*=(X,\|\cdot\|_{\widehat{X}})^*=(X,\|\cdot\|^*)^*$, in view of (\ref{eq:01}), (\ref{eq:strongest}) and (\ref{eq:02}) we get 
\[
\|x^*\| = \sup_{\|x\|_X\leq 1}|x^*(x)|=\sup_{\|x\|^*\leq 1}|x^*(x)|= \sup_{\|x\|_{\widehat{X}}\leq 1}|x^*(x)|.
\]
 Hence by the Hahn-Banach theorem, 
 \[
 \|x\|_{\widehat{X}}=\sup_{\|x^*\|\leq 1}|x^*(x)|=  \|x\|^*.
 \]

For $0<p<1$, the Banach envelope of the function space $L_p[0,1]$ is trivial $\{0\}$, while of the sequence space $\ell_p$ is equal to $\ell_1$ \cite{KPR}.

In this note we investigate Banach envelopes of quasi-normed  symmetric function and sequence spaces $E$, of unitary matrix spaces $C_E$, and of noncommutative spaces $\nonsp$ of $\tau$-measurable operators corresponding to $E$ and associated to a non-atomic von Neumann algebra $\M$ equipped with a normal, faithful, $\sigma$-finite trace $\tau$. In the first part of the manuscript we focus on the commutative case. We define the property $(HC)$ on $E$, which is necessary and sufficient for the Banach envelope $\widehat{E}$ to maintain the sequence or function character of $E$. We further show that if $E$ is a symmetric spaces with property $(HC)$ then its Banach envelope $\widehat{E}$ is also a symmetric space. The class of symmetric spaces satisfying $(HC)$ is quite large, as it contains but is not restricted to all order continuous spaces.
 In the second part of the paper we characterize Banach envelopes of noncommutative spaces $\nonsp$ and  $C_E$ in terms of the Banach envelope of $E$.  We compute  the Banach envelope norms on  $\nonsp$ and  $C_E$,  without any extra assumptions on the quasi-normed symmetric function or sequence spaces $E$.  For $E\in (HC)$, we obtain the complete characterizations of Banach envelopes of $\nonsp$ or $C_E$, that is we show that $\left(\widehat{\nonsp},\|\cdot\|_{\widehat{\nonsp}}\right)=\left(\widehat{E}\Mtau,\|\cdot\|_{\widehat{E}\Mtau}\right)$ and $\left(\widehat{C_E},\|\cdot\|_{\widehat{C_E}}\right)=\left(C_{\widehat{E}},\|\cdot\|_{C_{\widehat{E}}}\right)$.   Analogous result has been shown for the unitary matrix spaces $C_E$ in \cite{naw}, where $E$ was assumed to be weakly-separable quasi-normed symmetric sequence space. Our claim for noncommutative symmetric sequence spaces will yield a stronger version of the result in \cite{naw}.

Throughout the paper $\M$ will stand for a semi-finite von Neumann algebra acting on the Hibert space $H$, with a normal, faithful, semi-finite trace $\tau$. The identity on $\M$ will be denoted by $\one$ and the lattice of projections in $\M$ by $P(\M)$.  Non-zero projection $p\in
P(\mathcal{M})$ is called \emph{minimal} if $q\in P(\mathcal{M})$ and $q\leq p$ imply that $q=p$ or
$q=0$. The von Neumann algebra $\M$ is called \emph{non-atomic} if it has no minimal orthogonal projections. We say that the trace $\tau$ on $\mathcal{M}$ is $\sigma$-\textit{finite} if there exists a sequence $\{p_n\}\subseteq P(\mathcal{M})$ such that $p_n\uparrow \one$ and $\tau(p_n)<\infty$.   A closed and densely defined operator $x$ with domain $D(x)$ is called \textit{affiliated} with $\M$ whenever $ux=xu$ for all unitary operators $u\in\M'$. An operator $x$ is \textit{called $\tau$-measurable}, if $x$ is affiliated with $\M$ and its spectral projection $e^{\abs{x}}(\cdot)$ satisfies $\tau(e^{\abs{x}}(\lambda,\infty))<\infty$, for some $\lambda>0$. The set $S\Mtau$ of all $\tau$-measurable operators forms a $*$-algebra  under the strong sum and strong multiplication operations. 
For a closed and densely defined operator $x$, the \textit{support projection} is given by $s(x)=e^{\abs{x}}(0,\infty)$ and the \textit{null projection} by $n(x)=\one-s(x)=e^{\abs{x}}\{0\}$.

For $x\in S\Mtau$, the function
\[
\mu(t,x)=\inf\{s\geq 0:\, \tau(e^{\abs{x}}(s,\infty))\leq t\},\quad t\geq 0,
\]
is called a \textit{generalized singular value function} or \textit{decreasing rearrangement} of $x$. The distribution function of $x$ at $s\geq 0$ is given by $d(s, x)= \tau(e^{\abs{x}}(s,\infty))$.
We  set $\mu(\infty,x)=\lim_{t\to\infty}\mu(t,x)$, $x\in S\Mtau$. Observe that $\mu(t,x)=0$ for all $t\geq \tauone$. Hence we  will consider $\mu(\cdot,x)$ as a function on $[0,\tauone)$.

 A measurable operator $x$ is called \textit{positive} if $\langle x\xi,\xi\rangle\geq 0$ for all $\xi\in D(x)$. The partial order in the algebra of self-adjoint operators generated by the cone of positive operators will be denoted by $\leq$. 

Let $I$ denote the interval $[0,\tauone)$, $\tauone\leq \infty$, with the Lebesgue measure $m$, or the set $\mathbb N$ with the counting measure $m$. Let $L_0(I)$ be a collection of all complex-valued  measurable functions (resp. sequences) whenever  $I=[0,\tauone)$ (resp. $I=\mathbb N$).
For $f\in L_0(I)$, the \textit{support} of $f$, that is the set of all $t \in I$ for which $f (t)\neq 0$, will be denoted by $\supp{f}$.

Denote by $\left(L_1\Mtau,\|\cdot\|_{L_1\Mtau}\right)$ the Banach space of all $\tau$-integrable operators, that is the space of all $x\in S\Mtau$ for which $\|x\|_{L_1\Mtau}=\tau(\abs{x})<\infty$. The space $L_1\Mtau +\M$ is the Banach space of all measurable operators $x$ for which $\mu(x)\in L_1+L_{\infty}$, where $L_1=L_1[0,\tauone)$ and $L_\infty=L_\infty[0,\tauone)\subseteq L_0=L_0[0,\tauone)$ are the spaces of integrable and bounded functions on $[0,\tauone)$, respectively.

 In the commutative case, that is when $\M$ is identified with $ L_\infty$ with the trace $\tau$ given by integration, the algebra $S\Mtau$ coincides with the algebra of all functions $f\in L_0$ which  are finite except of the sets of finite measure. 
The singular value function $\mu(f)$ coincides then with the decreasing rearrangement of a function $f$ commonly denoted in the literature by $f^*$ \cite{BS}. Moreover, the distribution function for the commutative case is given by $d(s, f)=m\{t\geq 0:\, |f|(t)>s\}$, $s\geq 0$.

In the case of discrete measure, for $f=\{f(n)\}_{n=1}^\infty\in L_0(\mathbb N)$  with $\lim_{n\to\infty} f(n) =0$,  $\mu(t,f)$ is a  finite and countably valued function on $[0,\infty)$. In this case we will identify its decreasing rearrangement $\mu(f)$ with the sequence $\left\{\mu(n-1, f)\right\}_{n=1}^\infty$.

A quasi-normed space $\left(E,\|\cdot\|_E\right)$ is called a \textit{symmetric space} if $E\subseteq L_0(I)$ and for any $f\in L_0(I)$, $g\in E$, by $\mu(f)\leq \mu(g)$ it follows that $f\in E$ and $\|f\|_E\leq \|g\|_E$.
 When $I=[0,\tauone)$, $\tauone\leq\infty$, (resp. $I=\mathbb N$) a quasi-normed space $E$ will be called a quasi-normed \textit{function} (resp. \textit{sequence}) space.  If $(E,\|\cdot\|_E)$ is a normed symmetric space it is well known that $(L_1\cap L_{\infty} )(I)\hookrightarrow E\hookrightarrow (L_1+L_\infty)(I)$ \cite{KPS}. In particular when $I=\mathbb N$, $\ell_1\hookrightarrow E\hookrightarrow \ell_\infty$.  Notice also  that in this case  the condition $E\subseteq c_0$ is equivalent to $E \ne\ell_\infty$.

Recall that  $f\in E$ is called  order continuous whenever for any $ f_n \downarrow 0$ a.e. with $0\leq f_n \leq f$ we have $\|f_n\|_E\downarrow 0$. If all elements of $E$ are order continuous then the space $E$ is said to be \textit{order continuous}.
 
Given a symmetric quasi-normed function space $E\subseteq L_0[0,\tauone)$, the \textit{symmetric space of measurable operators} is defined as
\[
\nonsp = \{ x\in S\Mtau: \ \mu(x)\in E\}.
\]
 Similarly,  if $E$ is a quasi-normed symmetric sequence space such that   $E\subseteq c_0$, then  the {\em  unitary matrix space} is defined by 
\[
C_E=\{x\in K(H):\, S(x)\in E\},
\]
 where $K(H)$ is the ideal of all compact operators on a Hilbert space $H$ and $S(x)=\{S_n(x)\}_{n=1}^\infty$ is a sequence of singular numbers of an operator $x$.
 
 If $(E, \|\cdot\|_E)$ is a Banach symmetric space then $\|x\|_{\nonsp} = \|\mu(x)\|_E$ or  $\|x\|_{C_E} = \|S(x)\|_E$ is a norm and the spaces $\nonsp$ or $C_E$ equipped with these norms are complete \cite{KS}.

Moreover, the following holds.  
\begin{lemma}\cite[Lemma 6]{S}
\label{lem:quasi}
Let $E$ be a symmetric quasi-normed function space or a symmetric quasi-normed sequence space such that $E\subseteq c_0$. Then the functional  $\normcomm{x}=\norme{\mu(x)}$, $x\in\nonsp$, or $\|x\|_{C_E}=\norme{S(x)}$, $x\in C_E$, is a quasi-norm.
\end{lemma}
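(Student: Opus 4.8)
The plan is to verify the three defining properties of a quasi-norm for the functional $\normcomm{x}=\norme{\mu(x)}$ on $\nonsp$; the argument for $\|x\|_{C_E}=\norme{S(x)}$ on $C_E$ is entirely parallel, with the singular value function $\mu(x)$ replaced by the singular value sequence $S(x)$ and the continuous dilation by its discrete counterpart. First, the functional is finite and well defined, since by the definition of $\nonsp$ we have $\mu(x)\in E$ for every $x\in\nonsp$, so $\norme{\mu(x)}<\infty$. Positivity and homogeneity are immediate. Since $\tau$ is faithful, $x\neq 0$ forces $\mu(x)\neq 0$ in $L_0(I)$: indeed, $\mu(t,x)=0$ for all $t>0$ would give $\tau(e^{\abs{x}}(s,\infty))=0$, hence $e^{\abs{x}}(s,\infty)=0$, for every $s>0$, i.e. $\abs{x}=0$; positivity of $\norme{\cdot}$ then yields $\normcomm{x}>0$. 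Homogeneity follows from the standard identity $\mu(\alpha x)=\abs{\alpha}\mu(x)$ together with homogeneity of $\norme{\cdot}$.

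The crux is the quasi-triangle inequality. Here I would invoke the Fack--Kosaki inequality for $\tau$-measurable operators, $\mu(t+s,x+y)\le\mu(t,x)+\mu(s,y)$ for all $t,s\ge 0$; taking $s=t$ gives the pointwise bound $\mu(2t,x+y)\le\mu(t,x)+\mu(t,y)$, which I rewrite as $\mu(x+y)\le\sigma_2\mu(x)+\sigma_2\mu(y)$, where $\sigma_2$ is the dilation operator $(\sigma_2 f)(t)=f(t/2)$ and I have used its linearity. In the unitary matrix space case the same role is played by the Ky Fan inequality $S_{m+n-1}(x+y)\le S_m(x)+S_n(y)$; specializing to $m=n$ and using monotonicity of $S(x+y)$ yields $S_k(x+y)\le S_{\lceil k/2\rceil}(x)+S_{\lceil k/2\rceil}(y)$ for every $k$, that is, the termwise domination of $S(x+y)$ by the discrete dilation (each entry repeated twice) of $S(x)+S(y)$.

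Both sides of these inequalities are decreasing, so the pointwise (resp. termwise) domination is a domination of decreasing rearrangements, and the symmetry of $E$ then gives $\norme{\mu(x+y)}\le\norme{\sigma_2\mu(x)+\sigma_2\mu(y)}$. Applying the quasi-triangle inequality of $\norme{\cdot}$, with modulus $C_E$, and then the boundedness of the dilation operator $\sigma_2$ on $E$, say $\norme{\sigma_2 f}\le K\norme{f}$, I obtain $\normcomm{x+y}\le C_E\left(\norme{\sigma_2\mu(x)}+\norme{\sigma_2\mu(y)}\right)\le KC_E\left(\normcomm{x}+\normcomm{y}\right)$, which is the quasi-triangle inequality with constant $KC_E$. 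The discrete case is identical with $\sigma_2$ replaced by the sequence dilation.

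The main obstacle is the boundedness of the dilation operator on a symmetric quasi-normed space, which is precisely what converts the shifted Fack--Kosaki / Ky Fan estimate into a genuine quasi-triangle inequality; the submajorization argument giving $\|\sigma_2\|_{E\to E}\le\max(1,2)$ in the Banach case relies on convexity and does not apply verbatim in the non-locally-convex setting. I would handle this via the Aoki--Rolewicz theorem: replace $\norme{\cdot}$ by an equivalent $p$-norm ($0<p\le 1$) and pass to the $p$-convexification, a symmetric Banach space on which the dilation is bounded, then transfer the bound back to $E$ at the cost of an extra multiplicative constant. The remaining points---that $\sigma_2\mu(x)\in E$ so the right-hand sides lie in $E$, and the identification $\mu(n-1,x)=S_n(x)$ used to pass between the function and sequence pictures, with $E\subseteq c_0$ guaranteeing $S(x)\in E$ in the unitary matrix case---are routine.
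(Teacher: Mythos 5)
The paper never proves this lemma --- it is imported verbatim from \cite[Lemma 6]{S} --- so your argument must stand on its own, measured against the standard proof in that reference. Your skeleton is the correct (and the standard) one: positivity, homogeneity and well-definedness are routine, and the Fack--Kosaki inequality $\mu(t+s,x+y)\le\mu(t,x)+\mu(s,y)$ (resp.\ the Ky Fan inequality) reduces the quasi-triangle inequality to boundedness of the dilation $\sigma_2$ on $E$, which you rightly single out as the crux. The gap is in how you close that crux. The Aoki--Rolewicz theorem yields an equivalent quasi-norm that is $p$-subadditive, $\|f+g\|^p\le\|f\|^p+\|g\|^p$; it does \emph{not} yield lattice $p$-convexity, $\bigl\|(|f|^p+|g|^p)^{1/p}\bigr\|\le\left(\|f\|^p+\|g\|^p\right)^{1/p}$, and the latter is what is required for a $p$-convexification to carry a genuine norm: with $r=1/p\ge 1$ one has $(a+b)^{r}\ge a^{r}+b^{r}$ pointwise, so $p$-subadditivity feeds into the convexified functional in the wrong direction. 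This is not a removable technicality: by Kalton's work on L-convexity, a quasi-Banach lattice need not be $p$-convex for any $p>0$, so no renorming-plus-convexification argument can work without an additional hypothesis on $E$. Consequently the key estimate $\norme{\sigma_2 f}\le K\norme{f}$, and even the membership $\sigma_2\mu(x)\in E$ (which you defer as ``routine'' but which is part of the same problem, since the symmetry axiom of $E$ only applies once the dominating function is known to lie in $E$), remain unproven in your write-up.

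The crux has an elementary repair, needing no extra assumptions, and it is the one used in this literature. For decreasing $f$, the function $\sigma_2 f$ has the same distribution function, hence the same decreasing rearrangement, as a sum $g+h$ of two \emph{disjointly supported} functions, each equimeasurable with $f$: in the sequence case this is exact, $\sigma_2 f=(f_1,f_1,f_2,f_2,\dots)=(f_1,0,f_2,0,\dots)+(0,f_1,0,f_2,\dots)$; in the function case one transports $f$ (or its restriction $f\chi_{[0,\tauone/2)}$ when $\tauone<\infty$, which is all the Fack--Kosaki bound involves) onto two disjoint subsets of $[0,\tauone)$ by measure-preserving maps. Symmetry of $E$ gives $g,h\in E$ with $\norme{g}=\norme{h}=\norme{f}$; the quasi-triangle inequality in $E$ gives $g+h\in E$ with $\norme{g+h}\le 2C\norme{f}$; and symmetry applied once more to $\mu(\sigma_2 f)=\mu(g+h)$ gives $\sigma_2 f\in E$ and $\norme{\sigma_2 f}\le 2C\norme{f}$, where $C$ is the quasi-norm constant of $E$. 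Substituting this into your scheme settles both membership and the estimate, and yields the quasi-triangle inequality for $\normcomm{\cdot}$ (and for $\|\cdot\|_{C_E}$) with constant $2C^2$; the rest of your proof --- positivity via faithfulness of $\tau$, homogeneity, and the passage between $\mu(x)$ and $S(x)$ --- is fine.
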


For more information on the noncommutative symmetric spaces we refer the reader to \cite{DDP2, noncomm,LSZ}, on operator algebras to \cite{T}, and on Banach latices to \cite{BS, KPS}. 

Before we proceed to the main results of the paper, we state a few facts for easy reference.

\begin{lemma}\cite[Corollary 1.6]{CK-kext}
\label{lm:01} Let $x\in S\Mtau$ and $p\in P(\M)$. If $px=xp=0$ and $0\leq C\leq \mu(\infty,x)$ then $\mu(x+Cp)=\mu(x)$.
\end{lemma}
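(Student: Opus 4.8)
The plan is to reduce the statement to a comparison of distribution functions, after first replacing $x$ by its absolute value. First I would exploit the orthogonality hypothesis: from $px=xp=0$ one gets $x^{*}p=px^{*}=0$ by taking adjoints, so in
\[
\abs{x+Cp}^{2}=(x+Cp)^{*}(x+Cp)=x^{*}x+Cx^{*}p+Cpx+C^{2}p
\]
the two cross terms vanish and $\abs{x+Cp}^{2}=\abs{x}^{2}+C^{2}p$. Moreover $\abs{x}^{2}p=x^{*}(xp)=0$, which forces $\abs{x}p=p\abs{x}=0$, i.e. $\abs{x}$ and $p$ have orthogonal supports. Hence $(\abs{x}+Cp)^{2}=\abs{x}^{2}+C^{2}p=\abs{x+Cp}^{2}$, and since $\abs{x}+Cp\geq 0$, uniqueness of the positive square root gives
\[
\abs{x+Cp}=\abs{x}+Cp .
\]
Because $\mu(x+Cp)=\mu(\abs{x+Cp})$, it then suffices to compute the singular value function of the positive operator $a:=\abs{x}+Cp$.

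Next I would express the spectral projections of $a$ through those of $\abs{x}$. As $\abs{x}$ and $p$ act on orthogonal subspaces, $a$ acts as $\abs{x}$ on the support of $\abs{x}$ and as multiplication by $C$ on the range of $p$, so for $s\geq 0$
\[
e^{a}(s,\infty)=e^{\abs{x}}(s,\infty)+\chi_{(s,\infty)}(C)\,p .
\]
Passing to the distribution function $d(s,\cdot)=\tau\bigl(e^{(\cdot)}(s,\infty)\bigr)$ this yields $d(s,a)=d(s,x)+\tau(p)$ for $0\leq s<C$ and $d(s,a)=d(s,x)$ for $s\geq C$.

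The decisive step is to use $C\leq\mu(\infty,x)$ to neutralize the region $0\leq s<C$. Since $\mu(\infty,x)=\inf\{s\geq 0:\ d(s,x)<\infty\}$, every $s<\mu(\infty,x)$ satisfies $d(s,x)=\infty$; in particular $d(s,x)=\infty$ for all $s<C\leq\mu(\infty,x)$, and therefore $d(s,a)=d(s,x)+\tau(p)=\infty=d(s,x)$ on that range as well. Combining the two ranges gives $d(s,a)=d(s,x)$ for every $s\geq 0$. As the generalized singular value function is determined by the distribution function via $\mu(t,\cdot)=\inf\{s\geq 0:\ d(s,\cdot)\leq t\}$, this forces $\mu(x+Cp)=\mu(a)=\mu(\abs{x})=\mu(x)$.

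The hard part will be the identity $\abs{x+Cp}=\abs{x}+Cp$: one must check carefully that the cross terms genuinely vanish and that $\abs{x}$ and $p$ have orthogonal supports, so that the positive square root splits additively. Once this is secured, the remainder is a transparent comparison of distribution functions, the bound $C\leq\mu(\infty,x)$ serving exactly to ensure that the extra mass $\tau(p)$ is inserted only where $d(s,x)$ is already infinite and hence cannot affect $\mu$.
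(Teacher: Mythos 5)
Your proof is correct, but there is nothing in the paper to compare it against: the paper does not prove this lemma, it imports it verbatim from \cite[Corollary 1.6]{CK-kext}, so your argument stands as a self-contained replacement for an external citation. The steps all check out. Taking adjoints in $px=xp=0$ gives $x^*p=px^*=0$, so the cross terms in $(x+Cp)^*(x+Cp)$ vanish; the implication $\abs{x}^2p=0\Rightarrow\abs{x}p=0$ is justified by $(\abs{x}p)^*(\abs{x}p)=p\abs{x}^2p=0$, and uniqueness of positive square roots in the $*$-algebra $S\Mtau$ then yields $\abs{x+Cp}=\abs{x}+Cp$. The spectral splitting $e^{a}(s,\infty)=e^{\abs{x}}(s,\infty)+\chi_{(s,\infty)}(C)\,p$ for $s\geq 0$ is legitimate because $s(\abs{x})$, $p$ and $\one-s(\abs{x})-p$ are pairwise orthogonal projections reducing $a=\abs{x}+Cp$, on which $a$ acts as $\abs{x}$, $C$ and $0$ respectively. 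Finally, the identity $\mu(\infty,x)=\inf\{s\geq 0:\ d(s,x)<\infty\}$ on which your last step hinges is exactly the characterization the paper itself invokes in the proof of Corollary \ref{cor:isom}, so $d(s,x)=\infty$ for all $s<C\leq\mu(\infty,x)$, the extra mass $\tau(p)$ is absorbed, $d(\cdot,x+Cp)=d(\cdot,x)$ everywhere, and $\mu(x+Cp)=\mu(x)$ follows since $\mu$ is determined by the distribution function. The only stylistic remark is that the hypothesis $0\leq C\leq\mu(\infty,x)$ is used precisely once, where you claim $d(s,x)=\infty$ for $s<C$; it is worth flagging, as you do, that this is the entire role of that hypothesis.
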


\begin{proposition}\cite[Corollary 1.10]{CK-kext}
\label{prop:isom}
Let $\mathcal{M}$ be a non-atomic von Neumann algebra with a $\sigma$-finite trace $\tau$, $x\in S\Mtau$, and $\abs{x}\geq \mu(\infty,x)s(x)$. Denote by $p=s(\abs{x}-\mu(\infty,x)s(x))$ and define projection $q\in P(\M)$ in the following way.
\begin{itemize}
\item[{(i)}] If $\tau(s(x))<\infty$ set $q=\one$.
\item[{(ii)}] If $\tau(s(x))=\infty$ and $\tau(p)<\infty$, set $q=s(x)$.
\item[{(iii)}] If $\tau(p)=\infty$, set $q=p$.
\end{itemize}
 Then there exist a non-atomic commutative von Neumann subalgebra $\mathcal{N}\subseteq q\M q$ and a unital $*$-isomorphism
$V$ acting from the $*$-algebra $S\left(\left[0,\tauone\right),m\right)$ into the $*$-algebra $S(\mathcal{N},\tau)$, such that 
\[ V\mu(x)=\abs{x}q\ \ \ \text{ and }\ \ \ \mu(Vf)=\mu(f)\ \ \text{ for all } f\in S\left([0,\tauone),m\right).
\]
\end{proposition}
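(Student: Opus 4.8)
The plan is to realize $\abs{x}q$ as the multiplication operator corresponding to $\mu(x)$ inside a non-atomic commutative subalgebra built from the spectral resolution of $\abs{x}$, and then to take $V$ to be the resulting trace-preserving identification. Write $c=\mu(\infty,x)$ and let $e(\cdot)=e^{\abs{x}}(\cdot)$ be the spectral measure of $\abs{x}$, so that $\nu(B)=\tau(e(B))$ is a Borel measure on $[0,\infty)$. The first observation is that, directly from the definition of $\mu(\cdot,x)$, the function $\mu(x)$ on $[0,\tauone)$ is the decreasing rearrangement of the identity function with respect to $\nu$; equivalently, $m\{t:\mu(t,x)>s\}=\tau(e(s,\infty))$ for every $s\geq 0$. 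Thus $\abs{x}q$ and the multiplication operator by $\mu(x)$ carry the same distribution once $q$ is chosen so that $\tau(q)=\tauone$. The hypothesis $\abs{x}\geq c\,s(x)$ enters exactly here: it forces $e((0,c))=0$, so on its support $\abs{x}$ has no spectrum below the floor $c$, matching the fact that $\mu(x)\geq c$ on a set of infinite measure.

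Next I would justify the case split for $q$, whose sole purpose is to locate the infinite part of the trace so that $\tau(q)=\tauone$ and $\abs{x}q$ has the same distribution as $\mu(x)$. Since $\tau(e(s,\infty))=\infty$ for $s<c$ while $e((0,c))=0$, the infinite mass sits either at the atom $c$ or strictly above it. In case (i) everything is finite, $c=0$, and $q=\one$ works. In case (ii) the infinite mass is the atom at $c$, so taking $q=s(x)$ keeps that floor. In case (iii) the mass above $c$ is already infinite, so $q=p$ suffices and discards the now-irrelevant atom at $c$. In each case $\tau(q)=\tauone$, and $e(s,\infty)q$ matches the spectral projections of the multiplication operator by $\mu(x)$ layer by layer.

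The construction of $\mathcal{N}$ and $V$ then proceeds layer by layer. On the part of the spectrum where $\nu$ is non-atomic, the abelian von Neumann algebra generated by $\abs{x}q$ is already non-atomic and the restriction of $\tau$ transports to Lebesgue measure on the corresponding subinterval. At each atom $\lambda>c$ of $\nu$, the spectral projection $e(\{\lambda\})$ has finite trace and, because $\M$ is non-atomic, $e(\{\lambda\})\M e(\{\lambda\})$ is a non-atomic finite von Neumann algebra; inside it I would pick a non-atomic abelian subalgebra of trace $\tau(e(\{\lambda\}))$ on which $\abs{x}$ is the constant $\lambda$, matching the flat piece of $\mu(x)$ of that length. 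Collecting the continuous part together with these smeared atoms (and, in case (ii), the smeared infinite floor at $c$) produces a non-atomic commutative $\mathcal{N}\subseteq q\M q$ on which $\tau$ restricts to a measure isomorphic to Lebesgue measure on $[0,\tauone)$. Defining $V$ on indicators of $[0,\tauone)$ by sending the layer $\{\mu(x)>s\}$ to $e(s,\infty)q$, and extending to bounded and then to $\tau$-measurable functions by functional calculus, gives a unital $*$-isomorphism $S([0,\tauone),m)\to S(\mathcal{N},\tau)$ with $V\mu(x)=\abs{x}q$; trace preservation then yields $\mu(Vf)=\mu(f)$ for all $f$.

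The main obstacle is the atom-smearing step together with the bookkeeping that glues the continuous part, the smeared atoms, and the infinite floor into a single algebra isomorphic to all of $L_\infty[0,\tauone)$ in the correct decreasing order, so that the single generator $\mu(x)$ maps to $\abs{x}q$. Here I would invoke the standard structural fact that a non-atomic von Neumann algebra with semifinite trace contains, for each admissible value $a$, a non-atomic abelian subalgebra on which $\tau$ is carried to Lebesgue measure on an interval of length $a$, and then check the $\sigma$-finiteness compatibility and that the assembly takes place $\tau$-measurably rather than merely at the bounded level.
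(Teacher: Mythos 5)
You should know at the outset that the paper contains no proof of this proposition: it is imported verbatim, with citation, from \cite[Corollary 1.10]{CK-kext} (the paper only proves Corollary \ref{cor:isom} \emph{from} it). So there is no in-paper argument to compare yours against, and I can only judge the proposal on its own merits. On those merits, your skeleton is the natural one and is essentially sound: the identity $m\{t:\mu(t,x)>s\}=\tau(e(s,\infty))$ is correct (it uses right-continuity of $s\mapsto\tau(e(s,\infty))$, which follows from normality of $\tau$); the hypothesis $\abs{x}\geq \mu(\infty,x)s(x)$ does force $e((0,c))=0$; corners of a non-atomic algebra are non-atomic and $\sigma$-finiteness of $\tau$ passes to corners; the smearing of finite atoms via the standard structural fact (existence, under any projection of trace $a$ in a non-atomic algebra, of an abelian non-atomic subalgebra whose trace restriction is Lebesgue measure on an interval of length $a$) is legitimate; and trace preservation of the assembled isomorphism does yield $\mu(Vf)=\mu(f)$.

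The genuinely soft spot is your stated justification of the case split, namely that $\tau(q)=\tauone$ and that ``$e(s,\infty)q$ matches the spectral projections of the multiplication operator by $\mu(x)$ layer by layer.'' That criterion cannot be what validates the choice of $q$, because it is satisfied by the wrong choices as well: in case (iii), both $q=p$ and $q=s(x)$ have $\tau(q)=\tauone=\infty$, and both give $d(s,\abs{x}q)=d(s,x)=m\{t:\mu(t,x)>s\}$ for \emph{every} $s\geq 0$, since all quantities at or below the level $c$ are infinite and the distribution function is blind to the atom of $\nu=\tau\circ e$ at $c$. What actually forces $q=p$ is the finer requirement that the spectral measure of $\abs{x}q$, as a measure, equal the push-forward of $m$ under $\mu(x)$: any $V$ as in the statement satisfies $V(\chi_B(\mu(x)))=e^{\abs{x}q}(B)$ and $\tau(V\chi_A)=m(A)$, hence $m\{t:\mu(t,x)=\lambda\}=\tau(e^{\abs{x}q}\{\lambda\})$ for every $\lambda>0$. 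In case (iii) one has $m\{t:\mu(t,x)=c\}=0$ (because $m\{\mu(x)>c\}=\tau(p)=\infty$ and $\mu(x)$ is non-increasing), so keeping $s(x)-p$ inside $q$ is not merely superfluous but fatal whenever $\tau(s(x)-p)>0$; symmetrically, in case (ii) one needs $m\{t:\mu(t,x)=c\}=\infty=\tau(s(x)-p)$ and must exclude $n(x)$ to avoid a spurious atom at $0$, and in case (i) one needs $m\{t:\mu(t,x)=0\}=\tauone-\tau(s(x))=\tau(\one-s(x))$. Your construction paragraph implicitly performs exactly this atom-by-atom matching (atoms of $\nu$ above $c$ correspond to the flat pieces of $\mu(x)$, the floor at $c$ is kept precisely in case (ii) and excised precisely in case (iii)), so the proof goes through; but it is this bookkeeping of atoms, not the layer-by-layer comparison and not the equality $\tau(q)=\tauone$, that must be written down as the verification that the prescribed $q$ works.
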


\begin{corollary}
\label{cor:isom}
Let $\mathcal{M}$ be a non-atomic von Neumann algebra with a $\sigma$-finite trace $\tau$ and $x\in S\Mtau$ and $r=e^{\abs{x}}(\mu(\infty,x),\infty)$.  
Set $q=\one$ whenever $\tau(r)<\infty$, and $q=r$ if $\tau(r)=\infty$. 

Then there exist a non-atomic commutative von Neumann subalgebra $\mathcal{N}\subseteq q\M q$ and a unital $*$-isomorphism
$V$ acting from the $*$-algebra $S\left(\left[0,\tauone\right),m\right)$ into the $*$-algebra $S(\mathcal{N},\tau)$, such that 
\[ V\mu(x)=\abs{x}r+\mu(\infty, x)V\chi_{[\tau(r),\infty)}\ \ \ \text{ and }\ \ \ \mu(Vf)=\mu(f)\ \ \text{ for all } f\in S\left([0,\tauone),m\right).
\]
\end{corollary}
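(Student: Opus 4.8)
The plan is to deduce the statement from \propref{prop:isom} by replacing $x$ with a positive operator $y$ that \emph{does} satisfy the hypothesis $\abs{y}\ge\mu(\infty,y)s(y)$ of that proposition while having the same singular value function. Write $c:=\mu(\infty,x)$ and set $y=\abs{x}\vee c\one$, defined through the Borel functional calculus of $\abs{x}$; concretely $y=\abs{x}r+c(\one-r)$ with $r=e^{\abs{x}}(c,\infty)$. Since $y$ is affiliated with $\M$ and $e^{y}(\lambda,\infty)=e^{\abs{x}}(\lambda,\infty)$ for $\lambda>c$, we have $y\in S\Mtau$. Because $\{y>s\}=\{\abs{x}>s\}$ for $s\ge c$ while $\{y>s\}=\one$ for $s<c$, a direct computation of distribution functions gives $d(s,y)=d(s,x)$ for $s\ge c$, whence $\mu(y)=\mu(x)$ and $\mu(\infty,y)=c$. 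Moreover $y\ge c\one$, so when $c>0$ we get $s(y)=\one$ and $\abs{y}=y\ge c\,s(y)$, so the hypothesis of \propref{prop:isom} holds. (The degenerate case $c=0$ is immediate, since then $y=\abs{x}$ and the corollary reduces verbatim to the proposition.) Note that \lemref{lm:01} alone does not deliver $\mu(y)=\mu(x)$ when $\tau(r)<\infty$, since then $\mu(\infty,\abs{x}r)=0$; this is why I would argue via distribution functions.

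Next I would match the projections in the two statements. Applied to $y$, the proposition forms $p=s(\abs{y}-\mu(\infty,y)s(y))=s((\abs{x}-c)_+)=e^{\abs{x}}(c,\infty)=r$, and $s(y)=\one$. Running through cases (i)--(iii) of \propref{prop:isom} shows that its projection equals $\one$ precisely when $\tau(r)<\infty$ (cases (i) and (ii)) and equals $p=r$ when $\tau(r)=\infty$ (case (iii)); this is exactly the definition of $q$ in the corollary. Applying the proposition to $y$ therefore produces a non-atomic commutative von Neumann subalgebra $\mathcal{N}\subseteq q\M q$ and a unital $*$-isomorphism $V:S([0,\tauone),m)\to S(\mathcal{N},\tau)$ with $\mu(Vf)=\mu(f)$ and $V\mu(x)=V\mu(y)=\abs{y}q=yq$.

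It remains to convert $yq$ into the asserted form. From $\tau(r)=d(c,x)$ and the definition of $\mu$ one checks $\mu(t,x)>c$ for $t<\tau(r)$ and $\mu(t,x)=c$ for $t\ge\tau(r)$, so I would split $\mu(x)=\mu(x)\chi_{[0,\tau(r))}+c\,\chi_{[\tau(r),\infty)}$. Since $V$ is a unital $*$-homomorphism, $V\chi_{[0,\tauone)}=q$ and $V$ respects products, so with $P:=V\chi_{[0,\tau(r))}$ we get $V(\mu(x)\chi_{[0,\tau(r))})=(yq)P$ and $cV\chi_{[\tau(r),\infty)}=c(q-P)$. Because $\chi_{[0,\tau(r))}=e^{\mu(x)}(c,\infty)$ and $V$ intertwines the spectral projections of a self-adjoint element with those of its image, $P=e^{V\mu(x)}(c,\infty)=e^{yq}(c,\infty)$; evaluating the latter from $y=\abs{x}r+c(\one-r)$ gives $P=r$ whether $q=\one$ or $q=r$. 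Hence $(yq)P=\abs{x}r$, and adding the two pieces yields $V\mu(x)=\abs{x}r+cV\chi_{[\tau(r),\infty)}$, as required.

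The main obstacle is the identification $P=V\chi_{[0,\tau(r))}=r$: it requires knowing that the $*$-isomorphism $V$ furnished by \propref{prop:isom} commutes with the relevant spectral projections (equivalently, that it arises from a functional-calculus/measure-space identification), together with checking both possibilities $q=\one$ and $q=r$ so that $(yq)r=\abs{x}r$ uniformly. A secondary technical point is verifying that $\mu(x)$ equals the constant $c$ \emph{exactly} on $[\tau(r),\infty)$, which is what makes the constant term appear with the correct projection $\chi_{[\tau(r),\infty)}$ rather than on some larger support.
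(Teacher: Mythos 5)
Your reduction to Proposition \ref{prop:isom} via the auxiliary operator $y=\abs{x}\vee c\one=\abs{x}r+c(\one-r)$ is sound up to and including the conclusion $V\mu(x)=V\mu(y)=yq$: the verification that $\mu(y)=\mu(x)$, that $\abs{y}\geq\mu(\infty,y)s(y)$, and that the proposition's cases (i)--(iii) produce exactly the corollary's $q$, are all correct. The genuine gap is the one you flag yourself: the identification $P:=V\chi_{[0,\tau(r))}=e^{V\mu(x)}(c,\infty)$. Proposition \ref{prop:isom} hands you only a unital $*$-isomorphism $V$ with $V\mu(y)=\abs{y}q$ and $\mu(Vf)=\mu(f)$; it says nothing about $V$ intertwining Borel functional calculus, and a bare $*$-homomorphism need not preserve spectral projections --- that requires normality of $V$, which you would have to derive separately (for instance from the fact that $\mu(Vf)=\mu(f)$ forces $V$ to be trace preserving) and which your write-up leaves unproven. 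So, as written, the argument is incomplete precisely at the step that converts $yq$ into the asserted form $\abs{x}r+cV\chi_{[\tau(r),\infty)}$.

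The gap can be filled using only what the proposition provides. When $q=\one$ and $c>0$: since $\mu(x)=c$ exactly on $[\tau(r),\infty)$, one has the function identity $(\mu(x)-c)\chi_{[0,\tau(r))}=\mu(x)-c\chi_{[0,\infty)}$; applying $V$ and using multiplicativity and unitality gives $(y-c\one)P=y-c\one$, i.e.\ $(\abs{x}-c)r\,P=(\abs{x}-c)r$, hence $P\geq s\bigl((\abs{x}-c)r\bigr)=r$; since $\mu(P)=\chi_{[0,\tau(r))}$ forces $\tau(P)=\tau(r)<\infty$, faithfulness of $\tau$ yields $P=r$. It is worth noting, though, that the paper's own proof sidesteps the issue entirely by a different choice of auxiliary operator: it applies Proposition \ref{prop:isom} to $x_0=\abs{x}r$ (so that $V\mu(x_0)=x_0q=\abs{x}r$), writes $\mu(x)=\mu(x_0)+c\chi_{[\tau(r),\infty)}$ when $\tau(r)<\infty$, and concludes by linearity of $V$ alone: $V\mu(x)=V\mu(x_0)+cV\chi_{[\tau(r),\infty)}=\abs{x}r+cV\chi_{[\tau(r),\infty)}$. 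No identification of $V\chi_{[\tau(r),\infty)}$ is ever needed, because the corollary's formula leaves that term abstract. Your choice of $y$ (matching $\mu(y)=\mu(x)$) is what forces you to pin down the image of a specific projection under $V$; the paper's choice of $x_0$ (matching $V\mu(x_0)$ to the first summand of the target formula) keeps the proof purely linear.
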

\begin{proof}
Consider the operator $x_0=\abs{x}r$, where we have $s(x_0)=r$ and $x_0\geq \mu(\infty, x) s(x_0)$. Moreover, $\mu(x_0)=\mu(x)\chi_{[0,\tau(r))}$ \cite{noncomm}. If $\tau(r)<\infty$, then $\mu(\infty, x_0)=0$. Otherwise $\mu(x_0)=\mu(x)$. In either case $x_0\geq \mu(\infty, x_0)s(x_0)$.  Moreover, $p=s(x_0-\mu(\infty, x_0) s(x_0))=e^{x_0}(\mu(\infty, x_0),\infty)=e^{x}(\mu(\infty, x),\infty)=r$. If $\tau(r)=\infty$ set $q=p=r$, and if $\tau(r)<\infty$, $q=\one$. By Proposition \ref{prop:isom} (i) and (iii) applied to $x_0$ there exist a non-atomic commutative von Neumann subalgebra $\mathcal{N}\subseteq q\M q$ and a unital $*$-isomorphism
$V$ acting from the $*$-algebra $S\left(\left[0,\tauone\right),m\right)$ into the $*$-algebra $S(\mathcal{N},\tau)$, such that 
\[ V\mu(x_0)=x_0q\ \ \ \text{ and }\ \ \ \mu(Vf)=\mu(f)\ \ \text{ for all } f\in S\left([0,\tauone),m\right).
\]

In case of $\tau(r)=\infty$, $\mu(x)=\mu(x_0)$ and $q=r$, and therefore  $V\mu(x)=x_0r=\abs{x}r$.

Consider now the case when $\tau(r)=\tau(e^{\abs{x}}(\mu(\infty,x),\infty))<\infty$ with $q=\one$. Since $\mu(\infty, x)=\inf\{s\geq 0:\, \tau(e^{\abs{x}}(s,\infty))<\infty\}$, we have that $\tau(e^{\abs{x}}(s,\infty))=\infty$ for all $s\in[0, \mu(\infty,x))$. Recalling the definition of $\mu(t,x)=\inf\{s\geq 0:\, \tau(e^{\abs{x}}(s,\infty))\leq t\}$, it is easy to observe that $\mu(t,x)=\mu(\infty, x)$ for all $t\geq \tau(e^{\abs{x}}(\mu(\infty,x),\infty))=\tau(r)$. Hence 
\[
\mu(x)=\mu(x)\chi_{[0,\tau(r))}+\mu(\infty,x)\chi_{[\tau(r),\infty)}=\mu(x_0)+\mu(\infty,x)\chi_{[\tau(r),\infty)},
\]
and 
\[
V\mu(x)=V\mu(x_0)+\mu(\infty,x)V\chi_{[\tau(r),\infty)}=x_0+\mu(\infty,x)V\chi_{[\tau(r),\infty)}=\abs{x}r+\mu(\infty,x)V\chi_{[\tau(r),\infty)}.
\]

\end{proof}

\section{Main Results}
Throughout the remainder of the paper $E$ will stand for quasi-normed  space on $I$ whose dual $E^*$ separates points in $E$.

In view of Lemma \ref{lem:quasi}, the pairs $(\nonsp, \|\cdot\|_{\nonsp})$ and $(C_E, \|\cdot\|_{C_E})$ are quasi-normed spaces. The goal of this article is to find their Banach envelops.

Recall that given two $\sigma$-finite measure spaces $(\Omega_1 ,\mu_1 )$ and $(\Omega_2 , \mu_2 )$ we call a map $\sigma$ from $\Omega_1$ into $\Omega_2$ a measure preserving transformation  whenever for $\mu_2$-measurable subset $E$ of $\Omega_2$, the set
$\sigma^{-1} [E] = \{u \in\Omega_1:\, \sigma (u) \in E\}$ is a $\mu_1$-measurable subset of $\Omega_1$ and $\mu_1(\sigma^{-1} [E]) = \mu_2 (E)$ \cite{BS, KPS}.

It was shown in \cite{kammast} that given a symmetric quasi-normed function space $(E,\|\cdot\|_E)$ it follows that $(E,\|\cdot\|_{\widehat{E}})$ is a symmetric normed space. We will prove analogous result for symmetric quasi-normed sequence spaces.

\begin{lemma}
\label{lm:seqsymm}
If $(E,\|\cdot\|_E)$ is a  quasi-normed symmetric sequence space and $E\subseteq c_0$ then $(E,\|\cdot\|_{\widehat{E}})$ is a  normed symmetric sequence space.
\end{lemma}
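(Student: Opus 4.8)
The plan is to work throughout with Peetre's functional $\|\cdot\|^{*}$, which by the discussion preceding the lemma coincides with $\|\cdot\|_{\widehat{E}}$ and which is a genuine norm on the vector space $E$, since the standing hypothesis of this section guarantees that $E^{*}$ separates points. Because $(E,\norme{\cdot})$ is already a symmetric quasi-normed space on the very same underlying set $E\subseteq c_0\subseteq L_0(\mathbb{N})$, the membership clause in the definition of a symmetric space (namely $\mu(f)\le\mu(g)$, $g\in E$ $\Rightarrow$ $f\in E$) is automatic. Thus the entire content of the statement is the monotonicity $\mu(f)\le\mu(g)\Rightarrow\|f\|_{\widehat{E}}\le\|g\|_{\widehat{E}}$, and in particular that $\|\cdot\|_{\widehat{E}}$ depends only on $\mu$.

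The single device I would use is the elementary observation that any linear map $T$ on $E$ that is contractive for the quasi-norm, $\norme{Tg}\le\norme{g}$, is automatically contractive for $\|\cdot\|^{*}$: applying $T$ to a decomposition $f=\sum_{i=1}^{n}f_i$ gives $Tf=\sum_{i=1}^{n}Tf_i$ with $\sum_i\norme{Tf_i}\le\sum_i\norme{f_i}$, and taking the infimum yields $\|Tf\|^{*}\le\|f\|^{*}$. I would then manufacture the needed contractions from the symmetry of $(E,\norme{\cdot})$. First, for a multiplier $\phi$ with $|\phi|\le1$ one has $\mu(\phi g)\le\mu(g)$, so $g\mapsto\phi g$ is a quasi-norm contraction; choosing $\phi=f/g$ on $\supp g$ (and $0$ elsewhere) when $|f|\le|g|$ pointwise gives the \emph{domination property}: $|f|\le|g|$ pointwise $\Rightarrow\|f\|^{*}\le\|g\|^{*}$, and in particular $\||f|\|^{*}=\|f\|^{*}$.

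Second --- and this is the step that genuinely uses $E\subseteq c_0$ --- I would realize the passage between $f$ and its rearrangement by \emph{transport} operators. Writing $S=\supp f$ and $T=\supp\mu(f)$, there is a modulus-preserving bijection $\beta\colon T\to S$, since both are countable index sets carrying the same multiset of nonzero moduli of $f$. Extending by zero off the relevant supports, I would consider the linear maps $B\colon g\mapsto(g\circ\beta)\chi_T$ and $A\colon g\mapsto(g\circ\beta^{-1})\chi_S$; each acts on an arbitrary sequence by selecting and relabelling a sub-multiset of its entries, and since passing to a sub-multiset can only decrease the decreasing rearrangement, both satisfy $\mu(Bg)\le\mu(g)$ and $\mu(Ag)\le\mu(g)$, hence are quasi-norm contractions. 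As $B(|f|)=\mu(f)$ and $A(\mu(f))=|f|$, the contraction principle gives $\|\mu(f)\|^{*}\le\||f|\|^{*}$ and $\||f|\|^{*}\le\|\mu(f)\|^{*}$; combined with $\||f|\|^{*}=\|f\|^{*}$ this yields the equimeasurable invariance $\|f\|_{\widehat{E}}=\|\mu(f)\|_{\widehat{E}}$.

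Finally I would assemble the pieces: given $\mu(f)\le\mu(g)$, the decreasing rearrangements satisfy $0\le\mu(f)\le\mu(g)$ pointwise, so applying equimeasurable invariance at both ends and the domination property in the middle gives $\|f\|_{\widehat{E}}=\|\mu(f)\|^{*}\le\|\mu(g)\|^{*}=\|g\|_{\widehat{E}}$, which is precisely the symmetry of $(E,\|\cdot\|_{\widehat{E}})$. I expect the main obstacle to be the transport step: because of zero entries, a $c_0$-sequence and its decreasing rearrangement need not be related by a permutation of $\mathbb{N}$ (they may carry different numbers of zeros), so one cannot merely invoke permutation invariance as in the continuous picture; instead one must set up the one-sided maps $A$ and $B$ and verify, through the sub-multiset principle, that each is contractive for the quasi-norm. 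Everything else is a formal consequence of Peetre's formula.
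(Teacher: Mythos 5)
Your proof is correct, and at its core it runs on the same engine as the paper's: Peetre's formula plus the observation that a decomposition of $g$ can be pushed forward, term by term, through a map that does not increase decreasing rearrangements, with the modulus-preserving bijection between supports supplied by the hypothesis $E\subseteq c_0$. The differences are in the packaging, and they are real improvements in modularity. The paper compares two equimeasurable non-negative elements $f,g\in E$ directly: it takes an $\epsilon$-optimal decomposition $g=\sum_i g_i$, first normalizes it so that $g_i\geq 0$ and $\supp g_i\subseteq\supp g$, and then splits into two cases (supports of finite versus infinite cardinality), using a full permutation of $\mathbb{N}$ in the first case and a support bijection with cut-off in the second, checking by hand that each transported piece $\tilde{g_i}$ is equimeasurable with $g_i$. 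You instead isolate the mechanism as an abstract contraction principle (quasi-norm contractive linear maps are $\|\cdot\|^{*}$-contractive), and you route the comparison through $\mu(f)$ via the one-sided selection operators $A$ and $B$; since your sub-multiset principle only needs $\mu(Bg)\leq\mu(g)$ rather than equality of distributions, you avoid both the normalization of the pieces and the finite/infinite case split. Your multiplier argument also supplies a clean proof of the ideal property, which the paper dismisses as a ``standard argument.'' The one point to state carefully in a final write-up is the existence of the modulus-preserving bijection $\beta\colon\supp\mu(f)\to\supp f$: it holds because a $c_0$-sequence has only finitely many entries above any positive threshold, so each nonzero value occurs with finite multiplicity and the nonzero moduli can be enumerated in decreasing order; this is exactly where $E\subseteq c_0$ enters, just as it does in the paper.
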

\begin{proof}
It requires standard argument to show that if $f\in L_0(\mathbb N)$, $g\in (E,\|\cdot\|_{\widehat{E}})$, and $|f|\leq |g|$ then $f\in (E,\|\cdot\|_{\widehat{E}})$ and $\|f\|_{\widehat{E}}\leq \|g\|_{\widehat{E}}$. Therefore it suffices to prove that for two non-negative sequences $f=(f(k)),\ g=(g(k))\in E$ with $\mu(f)=\mu(g)$ we have $\|f\|_{\widehat{E}}=\|g\|_{\widehat{E}}$. Let $\epsilon>0$ and a sequence $\{g_i\}_{i=1}^n\subseteq E$ be such that $g=\sum_{i=1}^n g_i$ and $\sum_{i=1}^n \norme{g_i}\leq \|g\|_{\widehat{E}}+\epsilon$. Since $0\leq g\leq \sum_{i=1}^n|g_i|$,  we can always find sequences $h_i$ such that $g=\sum_{i=1}^nh_i$, $\supp{h_i}\subseteq \supp{g}$, and $0\leq h_i\leq |g_i|$. Since also $\sum_{i=1}^n \norme{h_i}\leq \sum_{i=1}^n \norme{g_i}\leq \|g\|_{\widehat{E}}+\epsilon$ we can assume without loss of generality that $g_i\geq 0$ and $\supp{g_i}\subseteq \supp{g}$. Since $f$ and $g$ have the same decreasing rearrangements and $f,g\in c_0$, their supports  must have the same measure and the following two cases hold.

Case $1^0$. Let  $m(\supp{g})=m(\supp{f})<\infty$. Then there exists  a bijective transformation $\sigma:\mathbb N\to\mathbb N$ such that $f=g\circ \sigma$. Hence $f=\sum_{i=1}^n g_i\circ \sigma$ and  $\|f\|_{\widehat{E}}\leq \sum_{i=1}^n\|g_i\circ \sigma\|_E=\sum_{i=1}^n\|g_i\|_E\leq \|g\|_{\widehat{E}}+\epsilon$. Consequently $\|f\|_{\widehat{E}}\leq \|g\|_{\widehat{E}}$. 

Case $2^0$. Let $m(\supp{g})=m(\supp{f})=\infty$. Then there is a bijective transformation $\sigma:\supp{f}\to\supp{g}$ such that $g(\sigma(k))=f(k)$ for $k\in\supp{f}$. Set $\tilde{g_i}(k)=g_i(\sigma(k))$ if $k\in\supp{f}$ and $\tilde{g_i}(k)=0$ otherwise. Then $f=\sum_{i=1}^n \tilde{g_i}$. Moreover for $t\geq 0$,  
\begin{align*}
d(t,\tilde{g_i})&=m\{k\in\mathbb N:\, \tilde{g_i}(k)>t\}=m\{k\in\supp{f}:\, g_i(\sigma(k))>t\}=m\{\sigma^{-1}(j):\,g_i(j)>t\}\\
&=m\left(\sigma^{-1}\{j\in\supp{g}:\,g_i(j)>t\}\right)=m\{j\in\supp{g}:\,g_i(j)>t\}\\
&=m\{j\in\mathbb N:\,g_i(j)>t\}=d(t,g_i).
\end{align*}
 The last equality for measures follows by the fact that $\supp{g_i}\subseteq \supp{g}$. Hence $\tilde{g_i}$ and $g_i$ have the same decreasing rearrangements, and so $\norme{\tilde{g_i}}=\norme{g_i}$. Consequently $\|f\|_{\widehat{E}}\leq \sum_{i=1}^n\norme{\tilde{g_i}}=\sum_{i=1}^n\norme{g_i}\leq \|g\|_{\widehat{E}}+\epsilon$. Therefore $\|f\|_{\widehat{E}}\leq \|g\|_{\widehat{E}}$. 

Similarly one can show that $\|g\|_{\widehat{E}}\leq \|f\|_{\widehat{E}}$, proving that $(E,\|\cdot\|_{\widehat{E}})$ is a normed symmetric sequence space. 
\end{proof}

 A sequence $\{x_n\}\subseteq S\Mtau$ converges in measure to $x\in S\Mtau$, denoted by $x_n\xrightarrow{\tau} x$, if and only if  $\mu(t, x-x_n)\to 0$ for all $t>0$  \cite[Lemma 3.1]{FK}.
For a symmetric normed space $E$,  we have that $\nonsp$ is continuously embedded in $S\Mtau$ with respect to the norm topology on $\nonsp$, that is if $\|x_n\|_{\nonsp}\to 0$,  it follows that $x_n\xrightarrow{\tau} 0$ \cite[Proposition 2.2]{DDP4}. 

In particular we get the analogous result for a normed symmetric space $E$ on $I$.
\begin{lemma}
\label{lm:measuretop}
Let $E$ be a   normed symmetric space on $I$. Then for the sequence $\{f_n\}\subseteq E$ satisfying $\|f_n\|_E\to 0$ it follows that $f_n\xrightarrow{m} 0$, that is $f_n$  converges to $0$ in measure $m$ on $I$.
\end{lemma}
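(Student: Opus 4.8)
The plan is to obtain the statement by specializing the noncommutative embedding result recalled just above to the abelian von Neumann algebra attached to $I$, the only genuine work being the translation between $\tau$-measure convergence and ordinary convergence in measure. First I would set $\M=L_\infty(I)$, acting by multiplication, with trace $\tau(f)=\int_I f\,dm$; this is a semifinite von Neumann algebra carrying a normal, faithful, $\sigma$-finite trace (non-atomic when $I=[0,\tauone)$, atomic when $I=\mathbb N$, but no such restriction is needed). As recorded in the Preliminaries, $S\Mtau$ is then the $*$-algebra of $m$-measurable functions on $I$ that are finite except on a set of finite measure, and the generalized singular value function $\mu(\cdot,f)$ coincides with the classical decreasing rearrangement $f^*$.

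Next I would identify $\nonsp$ with $E$ isometrically. Since $E$ is a normed symmetric space, applying the defining monotonicity to $\mu(f)=\mu(\mu(f))$ in both directions gives $\|f\|_E=\|\mu(f)\|_E$ for every $f\in E$, and the continuous embedding $E\hookrightarrow (L_1+L_\infty)(I)\subseteq S\Mtau$ ensures that each $f\in E$ lies in $S\Mtau$ with $\mu(f)\in E$. Conversely, any $f\in S\Mtau$ with $\mu(f)\in E$ belongs to $E$ directly from the definition of a symmetric space. Hence $\nonsp=E$ with $\|f\|_{\nonsp}=\|\mu(f)\|_E=\|f\|_E$. Invoking \cite[Proposition 2.2]{DDP4} in this setting, the hypothesis $\|f_n\|_E\to 0$ forces $f_n\xrightarrow{\tau}0$, which by \cite[Lemma 3.1]{FK} is exactly the assertion that $\mu(t,f_n)\to 0$ for every $t>0$.

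It remains to convert $\tau$-measure convergence into convergence in measure $m$, and this is the step I expect to require the most care. Here I would exploit the mutual relationship between $\mu(t,f)=\inf\{s\ge 0:\,d(s,f)\le t\}$ and the distribution function $d(s,f)=m\{|f|>s\}$, arguing purely from the monotonicity of these functions. For the direction needed: if $f_n\xrightarrow{m}0$ failed, there would be fixed $\varepsilon,\delta>0$ and a subsequence with $d(\varepsilon,f_{n_k})>\delta$; since $d(\cdot,f)$ is non-increasing this yields $\mu(\delta,f_{n_k})\ge\varepsilon$ along the subsequence, contradicting $\mu(\delta,f_n)\to 0$. Thus $d(\varepsilon,f_n)\to 0$ for every $\varepsilon>0$, i.e. $f_n\xrightarrow{m}0$, which completes the argument.

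Finally, I note that the noncommutative machinery can be bypassed altogether: the continuous embedding $E\hookrightarrow(L_1+L_\infty)(I)$ gives $\|f_n\|_{L_1+L_\infty}=\int_0^{1}\mu(s,f_n)\,ds\to 0$ (the upper limit being understood as $\min\{1,\tauone\}$, and $\mu(t,\cdot)\equiv 0$ for $t\ge\tauone$), and since $\mu(\cdot,f_n)$ is non-increasing this already forces $\mu(t,f_n)\to 0$ for each fixed $t>0$, whence the same conversion to measure convergence applies. I would present the specialization route as the main proof, consistent with the framing in the preceding paragraph, and mention this elementary alternative only as a remark.
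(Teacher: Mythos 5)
Your proposal is correct and takes essentially the same route as the paper: the paper offers no separate argument for this lemma, presenting it precisely as the specialization of the noncommutative embedding result \cite[Proposition 2.2]{DDP4} (together with the characterization of measure convergence in \cite[Lemma 3.1]{FK}) to the commutative algebra $\M=L_\infty(I)$ with integration as trace, which is your main argument. The details you supply beyond the citation --- the isometric identification $E(\M,\tau)=E$ via $\mu(f)=\mu(\mu(f))$, the passage from $\mu(t,f_n)\to 0$ for all $t>0$ to $d(\varepsilon,f_n)\to 0$ for all $\varepsilon>0$, and the elementary $L_1+L_\infty$ shortcut --- are all correct and simply make explicit what the paper leaves implicit.
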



Recall that for any Banach space $Z$ any
bounded linear operator $T:X \to Z$  extends with preservation of norm to an operator $\widehat{T}:\widehat{X}\to Z$ \cite{K3}. Given a  quasi-normed symmetric function (resp. sequence) space $(E,\|\cdot\|_E)$ it follows that $(E,\|\cdot\|_{\widehat{E}})$ is also a  normed symmetric function (resp. sequence)  space \cite{kammast}  (resp. Lemma \ref{lm:seqsymm}). Consequently $\|x\|_{L_1+L_\infty}\leq C \|x\|_{\widehat{E}}$ (resp. $\|x\|_{\ell_\infty}\leq C\|x\|_{\widehat{E}}$) for all $x\in E$ (see \cite{KPS}),
and the inclusion map $id:E\to L_1+L_\infty$ (resp. $id:E\to \ell_\infty$) has the norm $\|id\|=C$. Therefore there exists an extension of $id$ to $\widehat{id}:\widehat{E}\to L_1+L_\infty$ (resp. $\widehat{id}:\widehat{E}\to \ell_\infty$) with the same norm, so $\|\widehat{id}( x)\|_{L_1+L_\infty}\leq C\|x\|_{\widehat{E}}$ (resp. $\|\widehat{id}( x)\|_{\ell_\infty}\leq C\|x\|_{\widehat{E}}$ ), $x\in \widehat{E}$. 

It is known however that $\widehat{id}$ does not have to be an injection. Pietsch in \cite{Pt} shows an easy example of sequence spaces that this phenomenon occurs.
Unfortunately the same applies in the case of symmetric quasi-normed spaces as shown in \cite{DN}. The authors constructed there the Orlicz  sequence space, whose Banach envelope cannot be naturally treated as a sequence space. 

We state below the condition $(HC)$ characterizing quasi-normed symmetric spaces for which the embedding $\widehat{id}$ is injective, and therefore $\widehat{E}$ can be treated as a subspace of $L_1+L_{\infty}$ or $\ell_{\infty}$. The similar condition appeared  in \cite{DN} in the context of Orlicz sequence spaces.
   We will show in  Proposition \ref{prop:symmfun} that the property $(HC)$ guarantees that $\widehat{E}$ is also symmetric. 

\begin{definition}
\label{def:star}
Let $(E,\|\cdot\|_E)$ be a quasi-normed symmetric space.  We will write that $E\in (HC)$ if for  the Cauchy sequence $\{f_n\}$ in $(E,\|\cdot\|_{\widehat{E}})$ with $f_n\xrightarrow{m} 0$, it follows  that  $\|f_n\|_{\widehat{E}}\to 0$.
\end{definition} 
\begin{proposition}
\label{prop:star}
Let $(E,\|\cdot\|_E)$ be a quasi-normed symmetric space.
The inclusion map $id:E\to L_1+L_{\infty}$ extends to a continuous linear injection from $\widehat{E}$ into $L_1+L_{\infty}$ if and only if $E\in (HC)$.
\end{proposition}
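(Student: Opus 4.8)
The plan is to argue entirely inside the completion and reduce injectivity of $\widehat{id}$ to a comparison of two convergence notions. The excerpt has already established that $(E,\|\cdot\|_{\widehat E})$ is a normed symmetric space on $I$ and that $\|x\|_{L_1+L_\infty}\le C\|x\|_{\widehat E}$ for $x\in E$, so the bounded inclusion $id\colon(E,\|\cdot\|_{\widehat E})\to L_1+L_\infty$ admits a unique continuous linear extension $\widehat{id}\colon\widehat E\to L_1+L_\infty$; only its injectivity is at issue. First I would record how $\widehat{id}$ acts on a general element. By density of $E$ in $\widehat E$, any $\xi\in\widehat E$ is represented by a sequence $\{f_n\}\subseteq E$ that is Cauchy in $\|\cdot\|_{\widehat E}$; such a sequence is then Cauchy in the complete space $L_1+L_\infty$, so it converges there to some $g$, and continuity of $\widehat{id}$ gives $\widehat{id}(\xi)=\lim_n id(f_n)=g=\lim_n f_n$ in $L_1+L_\infty$. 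Two elementary facts then drive both implications: (a) convergence in $L_1+L_\infty$ implies convergence in measure on $I$, which is exactly Lemma~\ref{lm:measuretop} applied to the normed symmetric space $L_1+L_\infty$; and (b) limits in measure are unique, via the standard inequality $\mu(s+t,x+y)\le\mu(s,x)+\mu(t,y)$.

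For the implication ($\Leftarrow$), I would assume $E\in(HC)$ and take $\xi\in\widehat E$ with $\widehat{id}(\xi)=0$. Choosing a representing Cauchy sequence $\{f_n\}\subseteq E$, the identification above forces $f_n\to 0$ in $L_1+L_\infty$, hence $f_n\xrightarrow{m}0$ by fact (a). Since $\{f_n\}$ is Cauchy in $\|\cdot\|_{\widehat E}$ and converges to $0$ in measure, the defining property of $(HC)$ in Definition~\ref{def:star} yields $\|f_n\|_{\widehat E}\to 0$. As the norm is continuous on the completion, $\|\xi\|_{\widehat E}=\lim_n\|f_n\|_{\widehat E}=0$, so $\xi=0$ and $\widehat{id}$ is injective.

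For ($\Rightarrow$), I would assume $\widehat{id}$ injective and let $\{f_n\}\subseteq E$ be Cauchy in $\|\cdot\|_{\widehat E}$ with $f_n\xrightarrow{m}0$, denoting by $\xi$ the element of $\widehat E$ it represents. Then $\widehat{id}(\xi)=\lim_n f_n=g$ in $L_1+L_\infty$, and fact (a) gives $f_n\xrightarrow{m}g$. Comparing with the hypothesis $f_n\xrightarrow{m}0$ and invoking uniqueness of limits in measure (fact (b)), we obtain $g=0$, so $\widehat{id}(\xi)=0$ and hence $\xi=0$ by injectivity. Consequently $\lim_n\|f_n\|_{\widehat E}=\|\xi\|_{\widehat E}=0$, which is precisely the statement that $E\in(HC)$.

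The genuinely delicate point, and the one I would be most careful about, is the bookkeeping of the completion: verifying that $\widehat{id}$ sends the class of a $\|\cdot\|_{\widehat E}$-Cauchy sequence to its $L_1+L_\infty$-limit independently of the chosen representative, and that $L_1+L_\infty$-convergence genuinely entails convergence in measure so that the $\widehat E$-topology and the $L_1+L_\infty$-topology can be compared on $E$. Once facts (a) and (b) are in hand, both implications are short. The sequence case is handled verbatim, replacing $L_1+L_\infty$ by $\ell_\infty=\ell_1+\ell_\infty$ and using the analogous extension $\widehat{id}\colon\widehat E\to\ell_\infty$ together with Lemma~\ref{lm:seqsymm} in place of the function-space fact.
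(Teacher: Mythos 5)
Your proposal is correct and follows essentially the same route as the paper's proof: both directions hinge on approximating an element of $\widehat{E}$ by a $\|\cdot\|_{\widehat{E}}$-Cauchy sequence from $E$, passing to convergence in measure via the boundedness of $\widehat{id}$ into $L_1+L_\infty$ together with Lemma~\ref{lm:measuretop}, and then invoking either the $(HC)$ property or uniqueness of limits in measure plus injectivity. The only difference is presentational: you make explicit the completion bookkeeping and the uniqueness-of-measure-limits step that the paper uses implicitly.
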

\begin{proof}
 Suppose that $E\in (HC)$. As mentioned above, it remains only to show that $\widehat{id}$ is injective. Let $\widehat{id}(f)=0$, where $f\in \widehat{E}$. Choose a sequence $\{f_n\}\subseteq E$ for which $\|f_n-f\|_{\widehat{E}}\to 0$. Then 
 \[
\|f_n\|_{L_1+L_\infty}=\|\widehat{id}(f_n)-\widehat{id}(f)\|_{L_1+L_\infty}\leq C \|f_n-f\|_{\widehat{E}}\to 0.
 \]
 Therefore by Lemma \ref{lm:measuretop}, $f_n\xrightarrow{m} 0$. Since $\{f_n\}$ is also Cauchy in $\|\cdot\|_{\widehat{E}}$ norm, the fact that $E\in (HC)$ implies that $\|f_n\|_{\widehat{E}}\to 0$. Thus $f=0$ and  $\widehat{id}$ is injective.
 
 For the converse, suppose that $\widehat{id}$ is injective and let $\{f_n\}$ be a Cauchy sequence in $\|\cdot\|_{\widehat{E}}$ with $f_n\xrightarrow{m} 0$. Let $f\in \widehat{E}$ be such that $\|f_n-f\|_{\widehat{E}}\to 0$. Then 
 \[
 \|f_n-\widehat{id}(f)\|_{L_1+L_\infty}=\|\widehat{id}(f_n)-\widehat{id}(f)\|_{L_1+L_\infty}\leq C \|f_n-f\|_{\widehat{E}}\to 0,
 \]
  and $f_n\xrightarrow{m} \widehat{id}(f)$. Hence $\widehat{id}(f)=0$ and since $\widehat{id}$ is injective $f=0$. Consequently $\|f_n\|_{\widehat{E}}\to 0$ and $E\in (HC)$.
\end{proof}
As we see in the next proposition the large class of quasi-normed symmetric spaces satisfies condition $(HC)$.
\begin{proposition}
\label{prop:starorder}
If $E$ is an order continuous quasi-normed symmetric space then $E\in (HC)$.
\end{proposition}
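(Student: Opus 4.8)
The plan is to exploit the structural fact, recorded just before Definition~\ref{def:star}, that $(E,\|\cdot\|_{\widehat E})$ is a genuinely \emph{normed} symmetric space (by \cite{kammast} in the function case and by Lemma~\ref{lm:seqsymm} in the sequence case), together with the elementary bound $\|\cdot\|_{\widehat E}\le\|\cdot\|_E$. Thus $(E,\|\cdot\|_{\widehat E})$ is an order ideal in $L_0(I)$ on which the honest triangle inequality and the monotonicity $|u|\le|v|\Rightarrow\|u\|_{\widehat E}\le\|v\|_{\widehat E}$ hold. I would first record three ``absolute continuity'' consequences of order continuity for a \emph{single} $f\in E$, each obtained by applying the definition to an appropriate sequence decreasing to $0$ and dominated by $|f|\in E$: (i) $\|f\chi_{\{|f|>R\}}\|_E\to0$ as $R\to\infty$; (ii) $\|f\chi_{I\setminus G_k}\|_E\to0$ for any exhaustion $G_k\uparrow I$ by sets of finite measure; and (iii) $\sup_{m(A)<\delta}\|f\chi_A\|_E\to0$ as $\delta\to0$, the last obtained by splitting $f\chi_A$ at a height $R$ and using $\|\chi_{[0,s)}\|_E\to0$, itself a case of order continuity.

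The main obstacle is that the naive route \emph{fails}. One is tempted to pass to a subsequence with $\|f_{k+1}-f_k\|_{\widehat E}\le2^{-k}$, dominate $|f_k|\le|f_1|+\sum_j|f_{j+1}-f_j|=:h$, and apply order continuity to the fixed majorant $h$. But $h$ need not lie in $E$: an order continuous symmetric space need not be maximal (need not have the Fatou property), so the increasing partial sums, though bounded in $\|\cdot\|_{\widehat E}$, may have no supremum in $E$. The way around this is to dispense with a single majorant and instead show that the \emph{whole} Cauchy sequence has equi-absolutely-continuous envelope norm, after which a Vitali-type argument finishes the job.

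Concretely, fix $\epsilon>0$. Since $\{f_n\}$ is $\|\cdot\|_{\widehat E}$-Cauchy there is $N$ with $\|f_n-f_N\|_{\widehat E}<\epsilon$ for $n\ge N$. Applying (i), (ii), (iii) to the finite family $f_1,\dots,f_N$ and transferring from $\|\cdot\|_E$ to $\|\cdot\|_{\widehat E}$ via $\|\cdot\|_{\widehat E}\le\|\cdot\|_E$, I obtain a set $G$ of finite measure and a $\delta>0$ that work simultaneously for these finitely many functions. For $n\ge N$ these bounds are upgraded to all of $\{f_n\}$ through the ideal monotonicity $\|(f_n-f_N)\chi_A\|_{\widehat E}\le\|f_n-f_N\|_{\widehat E}$ and the triangle inequality, giving the two uniform estimates $\sup_n\|f_n\chi_{I\setminus G}\|_{\widehat E}<2\epsilon$ and $\sup_{m(A)<\delta}\sup_n\|f_n\chi_A\|_{\widehat E}<2\epsilon$.

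Finally, choose $\lambda>0$ so small that $\lambda\|\chi_G\|_{\widehat E}<\epsilon$ (legitimate since $\chi_G\in E$ when $m(G)<\infty$) and split
\[
f_n=f_n\chi_{I\setminus G}+f_n\chi_{G\cap\{|f_n|\le\lambda\}}+f_n\chi_{G\cap\{|f_n|>\lambda\}}.
\]
The first term is $<2\epsilon$ by the spatial bound, the second is dominated by $\lambda\chi_G$ hence $<\epsilon$, and for the third I invoke $f_n\xrightarrow{m}0$: since $m(G\cap\{|f_n|>\lambda\})\to0$, eventually this set has measure $<\delta$, so the uniform estimate gives $<2\epsilon$. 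Hence $\limsup_n\|f_n\|_{\widehat E}\le5\epsilon$, and as $\epsilon$ is arbitrary, $\|f_n\|_{\widehat E}\to0$, i.e. $E\in(HC)$. In the sequence case $I=\mathbb N$ the measure-small-set estimate is vacuous; there $G$ is a finite set, $f_n\xrightarrow{m}0$ forces $\max_{k\in G}|f_n(k)|\to0$, and $\|f_n\chi_G\|_{\widehat E}\le(\max_{k\in G}|f_n(k)|)\|\chi_G\|_{\widehat E}\to0$ directly. Once the Fatou pitfall is bypassed in this way, every remaining step is routine.
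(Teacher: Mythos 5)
Your proof is correct, and it takes a genuinely different route from the paper's. The paper argues by contradiction: assuming $\|f_n\|_{\widehat{E}}\not\to 0$, it passes to a semi-normalized subsequence and invokes Proposition 2.5 of \cite{kamray1} (a disjointification result, applicable precisely because $E$ is order continuous and $f_n\xrightarrow{m}0$) to produce pairwise disjoint $g_n\in E$ with $\sum_n\|f_n-g_n\|_{\widehat{E}}<\infty$; since $(E,\|\cdot\|_{\widehat{E}})$ is symmetric, disjointness forces $\|g_n\|_{\widehat{E}}\leq \||g_n|+|g_m|\|_{\widehat{E}}=\|g_n-g_m\|_{\widehat{E}}$, contradicting the fact that $\{g_n\}$ inherits the Cauchy property. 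You instead give a direct Vitali-type argument: equi-absolute continuity of the $\widehat{E}$-norms is transferred from the single element $f_N$ to the whole tail of the Cauchy sequence via the triangle inequality and ideal monotonicity, and convergence in measure then kills the remaining piece supported in a small subset of the finite-measure set $G$. Your observation that the naive dominated-majorant argument fails without the Fatou property is exactly the obstruction that makes both proofs nontrivial. What each approach buys: yours is self-contained and elementary, avoiding the external gliding-hump machinery of \cite{kamray1} entirely, at the cost of a longer three-way splitting; the paper's is shorter on the page because the hard work is outsourced, and the disjoint-sequence viewpoint ties in with standard lattice techniques. Both rest on the same foundation, namely that $(E,\|\cdot\|_{\widehat{E}})$ is a normed symmetric space (\cite{kammast} and Lemma~\ref{lm:seqsymm}). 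Two small points you should make explicit: in the sequence case Lemma~\ref{lm:seqsymm} needs $E\subseteq c_0$, which is automatic here since order continuity rules out the constant sequence $(1,1,\dots)$ belonging to $E$; and the fact that $\chi_G\in E$ for every finite-measure set $G$ in a quasi-normed symmetric space requires a short doubling argument (writing $\chi_{[0,2s)}$ as a sum of two functions equimeasurable with $\chi_{[0,s)}$), though this is standard.
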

\begin{proof}
Observe first that since $\|f\|_{\widehat{E}}\leq \|f\|_E$ for all $f\in E$, if $(E,\|\cdot\|_E)$ is order continuous then so is $(E,\|\cdot\|_{\widehat{E}})$. Suppose next that $\{f_n\}\subseteq E$ is Cauchy in $\|\cdot\|_{\widehat{E}}$, $f_n\xrightarrow{m} 0$ but $\|f_n\|_{\widehat{E}}$ does not converge to $0$. Thus without loss of generality we can assume that for all $n\in\mathbb N$, $C_1\leq\|f_n\|_{\widehat{E}}\leq C_2$ for some constants $C_1, C_2>0$. By Proposition 2.5 in \cite{kamray1} there is  a sequence $\{g_n\}$ of pairwise disjoint elements in $E$ such that 
\[
\sum_{n=1}^\infty\|f_n-g_n\|_{\widehat{E}}<\infty.
\] 
Note that the construction of the sequence $\{g_n\}$ in the proof of Proposition 2.5 in \cite{kamray1} does not require $(E,\|\cdot\|_{\widehat{E}})$ to be complete.

We have now that $\|f_n-g_n\|_{\widehat{E}}\to 0$. Thus going to a subsequence if necessary, we get a semi-normalized sequence  $\{g_n\}\subset E$, that is $C_1'\leq \|g_n\|_{\widehat{E}}\leq C_2'$, for some  $C_1', C_2'>0$ and all $n\in\mathbb N$, which is also Cauchy in $\|\cdot\|_{\widehat{E}}$. Thus, since $(E,\|\cdot\|_{\widehat{E}})$ is symmetric, we have
\[
C_1'\leq \|g_n\|_{\widehat{E}}\leq \||g_n|+|g_m|\|_{\widehat{E}}=\|g_n-g_m\|_{\widehat{E}}\to 0,
\]
which leads to a contradiction. Therefore $\|f_n\|_{\widehat{E}}\to 0$ and $E\in (HC)$.
\end{proof}

Note that not only order continuous spaces $E$ satisfy condition $(HC)$. In \cite{DN} in Example 5.7 (b) the  Orlicz space $\ell_\varphi$ is not order continues since  $\varphi$ does not satisfy condition $\Delta_2$ stated in the paper, but still $\varphi$ fulfills condition 5.5(c) which implies that the extension $\widehat{id}$ is an injection. Thus our main results Theorem \ref{thm:main} and Theorem \ref{thm:2} are not reduced only to order continuous spaces.

In view of Proposition \ref{prop:star}, if $E\in (HC)$ then we can identify $\widehat{id}(x)$ with $x$ for any $x\in \widehat{E}$. Thus any $x\in \widehat{E}$ will be treated as a measurable function (resp. sequence) which satisfies  $\|x\|_{L_1+L_\infty}\leq C\|x\|_{\widehat{E}}$ (resp. $\|x\|_{\ell_\infty}\leq C\|x\|_{\widehat{E}}$).
Observe that in view of Lemma \ref{lm:measuretop} if $\|x_n\|_{\widehat{E}}\to 0$, $x_n\in \widehat{E}$, then $x_n\xrightarrow{m} 0$. 

The next  simple observation is included here for easy reference.

\begin{lemma}
\label{lm:02}
Let $(E,\|\cdot\|_E)$ be a quasi-normed symmetric space  with $E\in (HC)$. If $\{f_n\}$ is Cauchy in $(E,\|\cdot\|_{\widehat{E}})$ and $f_n\xrightarrow{m} f$ for some $f\in L_0(I)$, then $f\in\widehat{E}$ and $\|f-f_n\|_{\widehat{E}}\to 0$.
\end{lemma}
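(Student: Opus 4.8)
The plan is to exploit the completeness of the Banach envelope together with the injectivity of $\widehat{id}$ furnished by the hypothesis $E\in(HC)$, and then to close the argument using uniqueness of limits in measure. Since $E^*$ separates points of $E$, the seminorm $\|\cdot\|_{\widehat{E}}$ is in fact a norm, so $\widehat{E}$ is the completion of $(E,\|\cdot\|_{\widehat{E}})$. Hence the first step is routine: as $\{f_n\}$ is Cauchy in $(E,\|\cdot\|_{\widehat{E}})$, it converges to some element $g\in\widehat{E}$, i.e. $\|f_n-g\|_{\widehat{E}}\to 0$.

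The substance of the proof lies in transferring this norm convergence in $\widehat{E}$ into convergence in measure. Because $E\in(HC)$, Proposition \ref{prop:star} allows me to regard $\widehat{E}$ as a subspace of $L_1+L_{\infty}$ (resp. $\ell_\infty$) through the continuous linear injection $\widehat{id}$, and in particular to identify the abstract limit $g$ with the genuine measurable function $\widehat{id}(g)$. Then $\|\widehat{id}(f_n)-\widehat{id}(g)\|_{L_1+L_\infty}\le C\|f_n-g\|_{\widehat{E}}\to 0$, and since norm convergence in $L_1+L_\infty$ forces convergence in measure (Lemma \ref{lm:measuretop} applied to the symmetric space $L_1+L_\infty$), I obtain $f_n\xrightarrow{m} g$.

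Finally, I would combine the two modes of convergence in measure. By hypothesis $f_n\xrightarrow{m} f$, while I have just shown $f_n\xrightarrow{m} g$; since the limit in measure is unique up to almost everywhere equality, it follows that $f=g$ almost everywhere. Consequently $f$ coincides with the measurable representative of $g$, so $f\in\widehat{E}$, and $\|f-f_n\|_{\widehat{E}}=\|g-f_n\|_{\widehat{E}}\to 0$, which is exactly the assertion.

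The only delicate point, and the step where the hypothesis is genuinely used, is the identification of the abstract completion element $g\in\widehat{E}$ with an actual measurable function, so that the statement $f_n\xrightarrow{m} g$ is meaningful and can be compared with $f_n\xrightarrow{m} f$. This is precisely what property $(HC)$ secures via the injectivity of $\widehat{id}$ in Proposition \ref{prop:star}; absent $(HC)$, the limit $g$ would live only in the abstract completion and there would be no way to equate it with $f$ in measure, so the conclusion could fail.
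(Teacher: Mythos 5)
Your proof is correct and is essentially the paper's own argument, only spelled out in more detail: completeness of $\widehat{E}$ produces the limit $g$, the $(HC)$-based identification of $\widehat{E}$ inside $L_1+L_\infty$ (Proposition \ref{prop:star}) combined with Lemma \ref{lm:measuretop} upgrades norm convergence to convergence in measure, and uniqueness of limits in measure gives $f=g\in\widehat{E}$. The paper's two-line proof compresses exactly these steps, so there is nothing to add.
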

\begin{proof}
Since  $\{f_n\}$ is Cauchy in $(E,\|\cdot\|_{\widehat{E}})$ there is $g\in\widehat{E}$ such that $\|f_n-g\|_{\widehat{E}}\to 0$. We have then by Lemma \ref{lm:measuretop} that $f_n\xrightarrow{m} g$ and thus $f=g\in \widehat{E}$.
\end{proof}

\begin{proposition}
\label{prop:symmfun}
 Let $E\in (HC)$ be a  quasi-normed symmetric space on $I$, where $E\subseteq c_0$  whenever $I=\mathbb N$. Then $\widehat{E}$ is a  Banach symmetric space. 
\end{proposition}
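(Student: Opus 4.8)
The plan is to verify, for the normed space $(\widehat{E},\|\cdot\|_{\widehat{E}})$, the three defining features of a Banach symmetric space: completeness, realizability inside $L_0(I)$, and the rearrangement/ideal property. Completeness is automatic, since $\widehat{E}$ is by construction the completion of $(E,\|\cdot\|_{\widehat{E}})$, and $\|\cdot\|_{\widehat{E}}$ is a genuine norm because $E^*$ separates points. Since $E\in (HC)$, \propref{prop:star} lets me identify $\widehat{E}$, via a continuous injection, with a linear subspace of $L_1+L_\infty$ (resp. of $\ell_\infty$; and since $E\subseteq c_0$ with $c_0$ closed in $\ell_\infty$, in fact $\widehat{E}\subseteq c_0$), so every $x\in\widehat{E}$ is a genuine function (resp. sequence) and $\widehat{E}\subseteq L_0(I)$. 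What remains is to show that for $g\in\widehat{E}$ and $f\in L_0(I)$ with $\mu(f)\le\mu(g)$ one has $f\in\widehat{E}$ and $\|f\|_{\widehat{E}}\le\|g\|_{\widehat{E}}$. I would split this into an ideal (pointwise) step and a rearrangement-invariance step, and then combine them.

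For the ideal step I claim: if $g\in\widehat{E}$ and $|f|\le|g|$, then $f\in\widehat{E}$ with $\|f\|_{\widehat{E}}\le\|g\|_{\widehat{E}}$. Choose $g_k\in E$ with $\|g_k-g\|_{\widehat{E}}\to 0$; by \lemref{lm:measuretop} also $g_k\xrightarrow{m}g$. Put $f_k=\sign(f)\min(|f|,|g_k|)$. Then $|f_k|\le|g_k|$, so $f_k\in E$, and since $(E,\|\cdot\|_{\widehat{E}})$ is symmetric, hence solid, $\|f_k\|_{\widehat{E}}\le\|g_k\|_{\widehat{E}}$. Because $t\mapsto\min(|f|,t)$ is $1$-Lipschitz, $|f_k-f_j|\le|g_k-g_j|$ and $|f-f_k|=(|f|-|g_k|)^+\le(|g|-|g_k|)^+\le|g-g_k|$; solidity then forces $\{f_k\}$ to be Cauchy in $\|\cdot\|_{\widehat{E}}$ while the second bound gives $f_k\xrightarrow{m}f$. \lemref{lm:02} now yields $f\in\widehat{E}$ and $\|f_k-f\|_{\widehat{E}}\to 0$, whence $\|f\|_{\widehat{E}}=\lim_k\|f_k\|_{\widehat{E}}\le\lim_k\|g_k\|_{\widehat{E}}=\|g\|_{\widehat{E}}$.

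Next I would establish rearrangement invariance: if $g\in\widehat{E}$ and $\mu(f)=\mu(g)$, then $f\in\widehat{E}$ and $\|f\|_{\widehat{E}}=\|g\|_{\widehat{E}}$. On the dense subspace this is already known, since equimeasurable elements of $E$ have equal $\|\cdot\|_{\widehat{E}}$-norm by \cite{kammast} in the function case and by the argument of \lemref{lm:seqsymm} in the sequence case; the task is to transport this equality across the completion. I would do so by approximating $g$ by $g_k\in E$ and producing, through measure-preserving transformations between $f$ and $g$ (a genuine bijection of supports in the sequence case as in \lemref{lm:seqsymm}, and a Ryff-type measure-preserving map in the non-atomic function case), companion elements $f_k\in E$ that are equimeasurable with $g_k$, converge to $f$ in measure, and remain Cauchy in $\|\cdot\|_{\widehat{E}}$; \lemref{lm:02} then places $f$ in $\widehat{E}$ with the correct norm. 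Finally, given only $\mu(f)\le\mu(g)$, I would enlarge $f$ to some $h\in L_0(I)$ with $\mu(h)=\mu(g)$ and $|f|\le|h|$ (possible since the distribution of $g$ dominates that of $f$ and $(I,m)$ is non-atomic, resp. can be matched atom-by-atom in the $c_0$ sequence case), conclude $h\in\widehat{E}$ with $\|h\|_{\widehat{E}}=\|g\|_{\widehat{E}}$ from the rearrangement step, and then deduce $f\in\widehat{E}$ with $\|f\|_{\widehat{E}}\le\|h\|_{\widehat{E}}=\|g\|_{\widehat{E}}$ from the ideal step.

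I expect the main obstacle to be the rearrangement-invariance step. Its difficulty is that the operations used to match $f$ with $g$ live on the dense subspace $E$, yet the resulting error terms must be controlled \emph{simultaneously} in measure (to invoke \lemref{lm:02}) and in $\|\cdot\|_{\widehat{E}}$ (to retain Cauchyness); unlike the ideal step there is no pointwise $1$-Lipschitz device available, so I anticipate relying either on the contraction behavior of the decreasing rearrangement in symmetric norms or on a sufficiently careful choice of measure-preserving approximants to keep $\{f_k\}$ Cauchy. Once this is secured, the ideal step together with the enlargement reduces the general case $\mu(f)\le\mu(g)$ to it, completing the verification that $\widehat{E}$ is a Banach symmetric space.
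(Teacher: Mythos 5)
Your two-step skeleton (ideal property, then rearrangement invariance) is the same as the paper's, and your ideal step is correct and complete --- indeed it is a mildly streamlined version of the paper's argument, which uses $g_n'=|g_n|\wedge|g|$ and $f_n=|f|\wedge g_n'$ where you use $f_k=\sign(f)\min(|f|,|g_k|)$; both rest on the same $1$-Lipschitz estimates, the solidity of $(E,\|\cdot\|_{\widehat{E}})$, and \lemref{lm:02}.

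The genuine gap is precisely the step you flag as ``the main obstacle'': you never exhibit the mechanism that keeps the transported approximants simultaneously Cauchy in $\|\cdot\|_{\widehat{E}}$ and convergent in measure, and that mechanism is the real content of the paper's proof. The paper's device is to attach one \emph{single, fixed} Ryff-type measure-preserving bijection $\sigma\colon A\to I$ to the limit function $f$ itself, satisfying $\mu(f)\circ\sigma=f\chi_A$, and to transport the approximants $f_n\in E$ of $f$ by the one map $h\mapsto(|h|\chi_A)\circ\sigma^{-1}$. Since composing with a fixed measure-preserving bijection preserves distributions, symmetry of $(E,\|\cdot\|_{\widehat{E}})$ gives $\|\tilde f_n\circ\sigma^{-1}-\tilde f_m\circ\sigma^{-1}\|_{\widehat{E}}=\|\tilde f_n-\tilde f_m\|_{\widehat{E}}\leq\|f_n-f_m\|_{\widehat{E}}$, so Cauchyness survives, while measure-preservation carries convergence in measure across; this is exactly the simultaneous control you said you lacked. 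By contrast, your plan to match $f$ directly with $g$ (making $f_k$ equimeasurable with $g_k$) is not only unimplemented but structurally problematic: a measure-preserving map carrying one given function onto another equimeasurable one need not exist in general --- Ryff-type theorems relate a function to its \emph{own} decreasing rearrangement, and even those require finite measure or $\mu(\infty,f)=0$ --- so one must route through $\mu(f)$, as the paper does. Moreover, your sketch ignores the function-space case $\mu(\infty,f)>0$, where the Ryff map only sees $f\chi_A\neq f$; there the paper needs an additional argument (approximating $f$ by $g\cdot(f\chi_A\circ\sigma_1)$ in $L_1\cap L_\infty$ via \cite[Ch. II, Theorem 2.1]{KPS}, combined with the already-proved ideal property) to obtain $\|f\|_{\widehat{E}}=\|f\chi_A\|_{\widehat{E}}$. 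Finally, your closing ``enlargement'' lemma (construct $h\geq|f|$ with $\mu(h)=\mu(g)$) is left unproved and is avoidable: once one knows $f\in\widehat{E}\iff\mu(f)\in\widehat{E}$ with equal norms, the general case $\mu(f)\leq\mu(g)$ follows by applying the ideal step to the rearrangements themselves, since $\mu(f)\leq\mu(g)$ is already a pointwise inequality.
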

\begin{proof}
We will show first that $\widehat{E}$ is an ideal in $L_0(I)$ that is if $\abs{f}\leq \abs{g}$ a.e., where $g\in \widehat{E}$ and $f\in L_0(I)$ it follows that $g\in\widehat{E}$ and $\|f\|_{\widehat{E}}\leq \|g\|_{\widehat{E}}$. 

Choose a sequence $\{g_n\}\subseteq E$ such that $\|g_n-g\|_{\widehat{E}}\to 0$.  Consequently, $g_n\xrightarrow{m} g$ and $\abs{g_n}\xrightarrow{m} \abs{g}$.  Using the fact that $(E,\|\cdot\|_{\widehat{E}})$ is symmetric (\cite{kammast} for function spaces or Lemma \ref{lm:seqsymm} for sequence spaces) and $\abs{\abs{g_n}-\abs{g_m}}\leq \abs{g_n-g_m}$, $\{\abs{g_n}\}$ is Cauchy in $\|\cdot\|_{\widehat{E}}$. Hence by Lemma \ref{lm:02}, $\abs{g}\in\widehat{E}$ and  $\|\abs{g_n}-\abs{g}\|_{\widehat{E}}\to 0$. Thus $\|g\|_{\widehat{E}}=\lim_{n\to\infty}\|g_n\|_{\widehat{E}}=\lim_{n\to\infty}\|\abs{g_n}\|_{\widehat{E}}=\|\abs{g}\|_{\widehat{E}}$. So we showed that 
$\|g\|_{\widehat{E}}=\|\abs{g}\|_{\widehat{E}}$ for $g\in \widehat{E}$. 

 Now set $g_n'=\abs{g_n}\wedge \abs{g}$. Then in view of $0\leq g_n'\leq \abs{g_n}$ and $\abs{g_n}\in E$, we have that $g_n'\in E$. It is easy to check that  $0\leq \abs{g}-g_n'\leq \abs{\abs{g}-\abs{g_n}}$ and $\abs{g_n'-g_m'}\leq \abs{\abs{g_n}-\abs{g_m}}$. Hence $g_n'\xrightarrow{m} \abs{g}$ and $\{g_n'\}$ is Cauchy in $\|\cdot\|_{\widehat{E}}$. 
 Define next $f_n=\abs{f}\wedge g_n'$. Then $0\leq f_n\leq g_n'$ and $f_n\in E$. We have that for any $t\in I$,
\[
0\leq (\abs{f}-f_n)(t)=\left.\begin{cases}0 &\text{ if }\, \abs{f}(t)\leq g_n'(t)\\ \abs{f}(t)-g_n'(t) &\text{ if }\, \abs{f}(t)> g_n'(t)\end{cases}\right\}\,\leq \abs{g}(t)-g_n'(t).
\]
Hence $f_n\xrightarrow{m} \abs{f}$. Furthermore $\abs{f_n-f_m}\leq\abs{ g_n'-g_m'}$, which implies that $\{f_n\}$ is Cauchy in $\|\cdot\|_{\widehat{E}}$.
By Lemma \ref{lm:02},  $\abs{f}\in\widehat{E}$ and $\|f_n-\abs{f}\|_{\widehat{E}}\to 0$.  Consider next functions $f_n\,\text{sgn}(f)$, where $\text{sgn}(f)(t)=1$ whenever $f(t)\geq 0$ and $\text{sgn}(f)(t)=-1$ otherwise. Then $\abs{f_n\,\text{sgn}(f)}= \abs{f_n}$, where $f_n\in E$, and therefore $f_n\,\text{sgn}(f) \in E$. Moreover $f_n\,\text{sgn}(f)\xrightarrow{m} \abs{f}\,\text{sgn}(f)=f$. Since $\{f_n\,\text{sgn}(f)\}$ is also Cauchy in $\|\cdot\|_{\widehat{E}}$,  it follows that $f\in\widehat{E}$ and $\|f_n\,\text{sgn}(f)-f\|_{\widehat{E}}\to 0$.    

Finally, $\|f\|_{\widehat{E}}=\lim_{n\to\infty}\|f_n\,\text{sgn}(f)\|_{\widehat{E}}=\lim_{n\to\infty}\|f_n\|_{\widehat{E}}\leq \lim_{n\to\infty}\|\abs{g_n}\|_{\widehat{E}}=\|\abs{g}\|_{\widehat{E}}=\|g\|_{\widehat{E}}$.

It remains now to show that $(\widehat{E},\|\cdot\|_{\widehat{E}})$ is symmetric, that is $f\in \widehat{E}$ if and only if $\mu(f)\in\widehat{E}$, and $\|f\|_{\widehat{E}}=\|\mu(f)\|_{\widehat{E}}$. Suppose first that $f\in\widehat{E}$.   By the first part $\widehat{E}$ is an ideal in $L_0(I)$, and so  $f\in\widehat{E}$ if and only if $|f|\in\widehat{E}$ and $\|f\|_{\widehat{E}}=\|\abs{f}\|_{\widehat{E}}$. Hence we will assume that $f\geq 0$.
Let $\{f_n\}\subseteq E$ be such that $\|f-f_n\|_{\widehat{E}}\to 0$, and so 
$f_n\xrightarrow{m} f$.

There is a measure preserving, injective and onto transformation $\sigma: A\to I$, such that $\mu(f)\circ \sigma =f\chi_{A}$, where $A=\{t:\, f(t)> \mu(\infty, f)\}$ if $m\{t:\, f(t)> \mu(\infty, f)\}=\infty$, or $A=\{t:\, f(t)\geq \mu(\infty, f)\}$ if $m\{t:\, f(t)> \mu(\infty, f)\}<\infty$ (compare Therorem 7.5 and Corollary 7.6  in Chapter II \cite{BS}, see also \cite{noncomm}). 
 
 Set $\tilde{f_n}=|f_n|\chi_A$. Then 
 \[
 \|\tilde{f_n}-f\chi_A\|_{\widehat{E}}=\||\tilde{f_n}-f\chi_A|\|_{\widehat{E}}\leq \|\abs{f_n-f}\|_{\widehat{E}}\to 0.
 \]
 Hence $\tilde{f_n}\xrightarrow{m}f\chi_A$ and since $\sigma^{-1}$ is also measure preserving  $\tilde{f_n}\circ \sigma^{-1}\xrightarrow{m}f\chi_A\circ \sigma^{-1}=\mu(f)$.
Note next that by  symmetry of $(E,\|\cdot\|_{\widehat{E}})$, it follows that $\tilde{f_n}\circ \sigma^{-1}\in E$ and $\|\tilde{f_n}\circ \sigma^{-1}-\tilde{f_m}\circ \sigma^{-1}\|_{\widehat{E}}\to 0$. Hence by Lemma \ref{lm:02}, 
\begin{equation}\label{eq:11}
\mu(f)\in\widehat{E} \ \ \ \ \text{and} \ \ \ \ \|\tilde{f_n}\circ \sigma^{-1}-\mu(f)\|_{\widehat{E}}\to 0.
\end{equation}
 Similarly one can show that if $\mu(f)\in\widehat{E}$ then  $f\in\widehat{E}$.

Observe that if $\mu(\infty, f)=0$, in particular if $E\subseteq c_0$ is a sequence space, then  $A=\supp{f}$ and so $f\chi_A=f$. Using the symmetry of $(E,\|\cdot\|_{\widehat{E}})$ it follows  that  $\|f\|_{\widehat{E}}=\lim_{n\to\infty} \|\tilde{f_n}\|_{\widehat{E}}=\lim_{n\to\infty} \|\mu(\tilde{f_n})\|_{\widehat{E}}=\lim_{n\to\infty} \|\mu(\tilde{f_n}\circ \sigma^{-1})\|_{\widehat{E}}=\lim_{n\to\infty} \|\tilde{f_n}\circ \sigma^{-1}\|_{\widehat{E}}=\|\mu(f)\|_{\widehat{E}}$.

Consider now the case when $E$ is a function space and $\mu(\infty, f)>0$. Let $\epsilon>0$. Since $f$ and $f\chi_A$ are equimeasurable, applying \cite[Ch. II, Theorem 2.1]{KPS} there exist a
measure preserving transformation $\sigma_1$ on $[0,\tauone)$, and a function $g\in L_0(I)$ satisfying  $\abs{g}\leq 1$ a.e. such that 
\[
\|f-g\cdot(f\chi_A\circ \sigma_1)\|_{L_1\cap L_\infty}\leq {\epsilon}/{K},
\]
where the constant $K>0$ comes from the inequality $\|h\|_{\widehat{E}}\leq K\|h\|_{L_1\cap L_\infty}$, $h\in L_1\cap L_\infty$.
Hence $\|f-g\cdot (f\chi_A\circ \sigma_1)\|_{\widehat{E}}\leq \epsilon$.  By the similar argument applied above to show (\ref{eq:11})  one can prove that $\|\tilde{f_n}\circ \sigma_1-f\chi_A\circ \sigma_1\|_{\widehat{E}}\to 0$. This combined with the symmetry of $(E,\|\cdot\|_{\widehat{E}})$ and the fact that $(\widehat{E},\|\cdot\|_{\widehat{E}})$ is an ideal, implies that
\begin{align*}
\|f\|_{\widehat{E}}&\leq \|f-g(f\chi_A\circ \sigma_1)\|_{\widehat{E}}+\|g(f\chi_A\circ \sigma_1)\|_{\widehat{E}}\leq \epsilon
+\|f\chi_A\circ \sigma_1\|_{\widehat{E}}=\epsilon+\lim_{n\to\infty}\|\tilde{f_n}\circ \sigma_1\|_{\widehat{E}}\\
&=\epsilon+\lim_{n\to\infty}\|\tilde{f_n}\|_{\widehat{E}}=\epsilon+\|f\chi_A\|_{\widehat{E}},
\end{align*}
and so $\|f\|_{\widehat{E}}\leq \|f\chi_A\|_{\widehat{E}}$. Consequently, $\|f\|_{\widehat{E}} = \|f\chi_A\|_{\widehat{E}}$ and by (\ref{eq:11}) we get
\[
\|f\|_{\widehat{E}}=\lim_{n\to\infty}\|\tilde{f_n}\|_{\widehat{E}}=\lim_{n\to\infty}\|\tilde{f_n}\circ \sigma^{-1}\|_{\widehat{E}}=\|\mu(f)\|_{\widehat{E}}.
\]
\end{proof}

After proving the necessary results in function spaces we are ready to investigate the spaces of measurable operators.

The first result guarantees monotonicity of the norm $\|\cdot\|_{\widehat{\nonsp}}$ on $\nonsp$.
\begin{proposition}
\label{prop:1} Let $E$ be a quasi-normed symmetric function space. Then for any $0\leq x\leq y\in{\nonsp}$ we have $\|x\|_{\widehat{\nonsp}}\leq\|y\|_{\widehat{\nonsp}}$.
\end{proposition}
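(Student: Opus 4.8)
The plan is to run everything through Peetre's formula, which by the discussion in the preliminaries identifies the envelope norm on the quasi-normed space $\nonsp$ with
\[
\|x\|_{\widehat{\nonsp}}=\|x\|^{*}=\inf\Bigl\{\sum_{i=1}^{n}\|x_i\|_{\nonsp}:\ \sum_{i=1}^{n}x_i=x,\ x_i\in\nonsp,\ n\in\mathbb N\Bigr\}.
\]
Because this is an infimum over \emph{all} decompositions of $x$, it suffices, given an arbitrary decomposition $y=\sum_{i=1}^{n}y_i$ of $y$, to manufacture a decomposition $x=\sum_{i=1}^{n}x_i$ whose terms obey $\|x_i\|_{\nonsp}\le\|y_i\|_{\nonsp}$; taking the infimum over decompositions of $y$ then yields the claim. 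The single algebraic ingredient that transports a decomposition of $y$ into one of $x$ is a Douglas-type factorization $x=aya^{*}$ with a contraction $a\in\M$.

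To produce this factorization I would argue as follows. From $0\le x\le y$ one has $D(y^{1/2})\subseteq D(x^{1/2})$ and $\|x^{1/2}\xi\|\le\|y^{1/2}\xi\|$ for all $\xi\in D(y^{1/2})$, so the assignment $y^{1/2}\xi\mapsto x^{1/2}\xi$ is well defined and contractive on the range of $y^{1/2}$ and extends, by zero on the orthogonal complement, to a contraction $a$ on $H$ with $x^{1/2}=a\,y^{1/2}$. A standard commutant computation shows $a\in\M$: for every unitary $u\in\M'$ the affiliation of $x,y$ with $\M$ gives $u x^{1/2}=x^{1/2}u$ and $u y^{1/2}=y^{1/2}u$, and since such $u$ preserves both the closed range and the kernel of $y^{1/2}$ one deduces $au=ua$, whence $a\in\M''=\M$. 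Taking adjoints in $x^{1/2}=a\,y^{1/2}$ and using that $x^{1/2},y^{1/2}$ are self-adjoint gives $x^{1/2}=y^{1/2}a^{*}$, and therefore, working with the strong product in $S\Mtau$,
\[
x=x^{1/2}x^{1/2}=(a\,y^{1/2})(y^{1/2}a^{*})=a\,y\,a^{*},\qquad \|a\|\le1.
\]

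With the factorization in hand, fix any decomposition $y=\sum_{i=1}^{n}y_i$ with $y_i\in\nonsp$ and set $x_i=a\,y_i\,a^{*}$. Then $\sum_{i=1}^{n}x_i=a\bigl(\sum_{i=1}^{n}y_i\bigr)a^{*}=a\,y\,a^{*}=x$. Since $a$ is a contraction, the standard submultiplicativity of generalized singular values, $\mu(bzc)\le\|b\|\,\|c\|\,\mu(z)$, gives $\mu(a\,y_i\,a^{*})\le\mu(y_i)$ on $[0,\tauone)$; as $E$ is symmetric this forces $a\,y_i\,a^{*}\in\nonsp$ together with $\|x_i\|_{\nonsp}=\norme{\mu(a\,y_i\,a^{*})}\le\norme{\mu(y_i)}=\|y_i\|_{\nonsp}$. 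Hence $\|x\|_{\widehat{\nonsp}}\le\sum_{i=1}^{n}\|x_i\|_{\nonsp}\le\sum_{i=1}^{n}\|y_i\|_{\nonsp}$, and passing to the infimum over all decompositions of $y$ gives $\|x\|_{\widehat{\nonsp}}\le\|y\|_{\widehat{\nonsp}}$.

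The singular value estimate and the passage through Peetre's formula are routine; the step requiring care is the Douglas factorization in the setting of unbounded $\tau$-measurable operators, namely justifying the domain inclusion and contractivity defining $a$, verifying that $a$ lands in $\M$ rather than merely in $\BH$ through the commutant argument, and confirming that $a\,y^{1/2}\cdot y^{1/2}a^{*}=a\,y\,a^{*}$ is valid for the strong product on $S\Mtau$. Once $x=a\,y\,a^{*}$ with $a\in\M$, $\|a\|\le1$ is secured, the remainder is formal.
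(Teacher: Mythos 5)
Your proof is correct and takes essentially the same route as the paper: the paper likewise combines the Peetre-formula description of $\|\cdot\|_{\widehat{\nonsp}}$ with a contraction factorization $x=z^*yz$, $z\in\M$, $\|z\|_{\M}\leq 1$, and the estimate $\mu(z^*y_iz)\leq\mu(y_i)$ applied to a near-optimal decomposition of $y$. The only difference is that the factorization you derive by hand via the Douglas-lemma and commutant argument (including the unbounded-operator technicalities you flag) is simply cited in the paper as \cite[Proposition 4.5]{Pag}.
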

\begin{proof}Let  $0\le x \le y \in \nonsp$.
By \cite[Proposition 4.5]{Pag} there exists an operator $z\in \M$, $\|z\|_{\M}\leq 1$, such that $\sqrt x=\sqrt yz$. Hence $x=(\sqrt x)^*\sqrt x=z^*(\sqrt y)^*\sqrt y z=z^*yz$.
 Take  $\epsilon>0$ and $\{y_i\}_{i=1}^n\subseteq\nonsp$ such that $y=\sum_{i=1}^n y_i$ and
\[
\sum_{i=1}^n \normcomm{y_i}\leq \|y\|_{\widehat{\nonsp}}+\epsilon.
\] 
Then $x=z^* yz=\sum_{i=1}^n z^* y_i z$, where $\mu(z^* y_i z)\leq \|z\|_{\M}\|z^*\|_{\M} \mu( y_i)\leq  \mu( y_i)$.  Hence
\[
\|x\|_{\widehat{\nonsp}}\leq \sum_{i=1}^n \normcomm{z^*y_i z}\leq \sum_{i=1}^n \normcomm{y_i}\leq \|y\|_{\widehat{\nonsp}}+\epsilon.
\]
Since $\epsilon$ is arbitrary the claim follows.
\end{proof}

The first main result computing the norm $\|x\|_{\widehat{E\Mtau}}$ for $x\in\nonsp$ does not require any additional assumption on $E$. 
\begin{theorem}
\label{thm:1}
Let $\mathcal{M}$ be a non-atomic von Neumann algebra with a $\sigma$-finite trace $\tau$ and $E$ be a quasi-normed symmetric function space.  Then for every $x\in \nonsp$,
\[
\|x\|_{\widehat{\nonsp}}=\|\mu(x)\|_{\widehat{E}}.
\]
\end{theorem}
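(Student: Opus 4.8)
The plan is to prove the two inequalities $\|\mu(x)\|_{\widehat E}\le\|x\|_{\widehat{\nonsp}}$ and $\|x\|_{\widehat{\nonsp}}\le\|\mu(x)\|_{\widehat E}$ separately, throughout identifying $\|\cdot\|_{\widehat{\nonsp}}$ with the Peetre functional $\|\cdot\|^{*}$ on the quasi-normed space $\nonsp$. First I would reduce to the case $x\ge 0$: writing the polar decomposition $x=u|x|$ with $u\in\M$ a partial isometry and $|x|=u^{*}x$, any decomposition of $|x|$ transports to one of $x$ by left multiplication by $u$ and conversely, and since $\mu(uy)\le\mu(y)$ one obtains $\|x\|_{\widehat{\nonsp}}=\||x|\|_{\widehat{\nonsp}}$, while $\mu(x)=\mu(|x|)$. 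Hence I may assume $x\ge 0$.

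For $\|\mu(x)\|_{\widehat E}\le\|x\|_{\widehat{\nonsp}}$ I would consider the functional $p(x)=\|\mu(x)\|_{\widehat E}$ on $\nonsp$ and show it is a seminorm dominated by $\|\cdot\|_{\nonsp}$. Homogeneity and $p(x)\le\|\mu(x)\|_E=\|x\|_{\nonsp}$ are immediate from $\|\cdot\|_{\widehat E}\le\|\cdot\|_E$. For the triangle inequality I would invoke the Hardy--Littlewood submajorization $\mu(x+y)\prec\prec\mu(x)+\mu(y)$ together with the fact that $(E,\|\cdot\|_{\widehat E})$ is a symmetric normed space (\cite{kammast} for functions, Lemma~\ref{lm:seqsymm} for sequences) and therefore monotone with respect to $\prec\prec$ \cite{KPS}; this yields $p(x+y)=\|\mu(x+y)\|_{\widehat E}\le\|\mu(x)+\mu(y)\|_{\widehat E}\le p(x)+p(y)$. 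Since $\|\cdot\|_{\widehat{\nonsp}}=\|\cdot\|^{*}$ is, by the preliminaries, the largest seminorm on $\nonsp$ dominated by $\|\cdot\|_{\nonsp}$, I conclude $p\le\|\cdot\|_{\widehat{\nonsp}}$, which is exactly the asserted inequality.

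For the converse $\|x\|_{\widehat{\nonsp}}\le\|\mu(x)\|_{\widehat E}$, I would set $a=\mu(\infty,x)$, $r=e^{x}(a,\infty)$, and introduce the model operator $w=xr+a(\one-r)$. On $\one-r=e^{x}[0,a]$ one has $x\le a$, so $0\le x\le w$ with $w\in\nonsp$, and by Proposition~\ref{prop:1} it suffices to bound $\|w\|_{\widehat{\nonsp}}$. Fixing $\epsilon>0$ I would pick a decomposition $\mu(x)=\sum_{i=1}^{n}f_i$ with $f_i\ge 0$ (reducing to the nonnegative case as in the proof of Lemma~\ref{lm:seqsymm}) and $\sum_i\|f_i\|_E\le\|\mu(x)\|_{\widehat E}+\epsilon$, and then apply Corollary~\ref{cor:isom}, which supplies a singular-value preserving unital $\ast$-isomorphism $V$ with $V\mu(x)=xr+aV\chi_{[\tau(r),\infty)}$. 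When $\tau(r)<\infty$, unitality of $V$ forces $V\chi_{[\tau(r),\infty)}=\one-r$, so $w=V\mu(x)=\sum_i Vf_i$ and $\|w\|_{\widehat{\nonsp}}\le\sum_i\|Vf_i\|_{\nonsp}=\sum_i\|\mu(f_i)\|_E=\sum_i\|f_i\|_E$. When $\tau(r)=\infty$ the tail term vanishes, $\sum_i Vf_i=xr$ with each $Vf_i\in r\M r$; using $a\le\sum_i\mu(\infty,f_i)$ (subadditivity of $\mu(\infty,\cdot)$) I would split $a=\sum_i c_i$ with $0\le c_i\le\mu(\infty,f_i)$ and set $x_i=Vf_i+c_i(\one-r)$. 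Lemma~\ref{lm:01} (with $p=\one-r$) gives $\mu(x_i)=\mu(Vf_i)=\mu(f_i)$, while $\sum_i x_i=xr+a(\one-r)=w$, so again $\|w\|_{\widehat{\nonsp}}\le\sum_i\|f_i\|_E$. Letting $\epsilon\to 0$ closes both subcases.

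The main obstacle is exactly the flat tail at infinity when $a=\mu(\infty,x)>0$: both $x$ and its rearrangement carry infinite-trace spectral mass at the level $a$, and splitting this tail off as a separate summand is too lossy, since the triangle inequality then runs the wrong way. The resolving device is to dominate $x$ by the model $w=xr+a(\one-r)$, whose tail is a single scalar multiple of the projection $\one-r$; Corollary~\ref{cor:isom} together with the unitality of $V$ (finite case) or Lemma~\ref{lm:01} (infinite case) then absorbs that tail into the pieces $Vf_i$ without increasing their singular value functions. The remaining points I expect to be routine are the reduction to nonnegative $f_i$ and the check that $w$, $Vf_i$ and $c_i(\one-r)$ all lie in $\nonsp$; the last holds because $c_i\chi_{[0,\tauone)}\le\mu(f_i)$ and $\widehat E$ is an ideal.
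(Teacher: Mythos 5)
Your reduction to $x\ge 0$ and your proof of the inequality $\|x\|_{\widehat{\nonsp}}\le\|\mu(x)\|_{\widehat E}$ follow the paper closely: your dominating operator $w=xr+\mu(\infty,x)(\one-r)$ is literally the paper's $rV\mu(x)r+\mu(\infty,x)r^{\perp}$, and your splitting $\mu(\infty,x)=\sum_i c_i$ with $c_i\le\mu(\infty,f_i)$ is exactly the paper's device with the constants $C_i$. The paper merely handles both trace cases at once, decomposing the dominating operator into the pieces $rVf_ir+C_ir^{\perp}$ and showing $\mu(rVf_ir+C_ir^{\perp})\le\mu(f_i)$ via $V\chi_A=r$, where you use unitality of $V$ in the case $\tau(r)<\infty$ (which indeed needs the observation that $V$ carries $\chi_{[0,\tau(r))}$ to $r$, i.e.\ that $*$-isomorphisms preserve support projections -- the same fact the paper uses) and Lemma \ref{lm:01} in the case $\tau(r)=\infty$. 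That half is correct.

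The genuine gap is in the other inequality. You justify the triangle inequality for $p(x)=\|\mu(x)\|_{\widehat E}$ by combining $\mu(x+y)\prec\prec\mu(x)+\mu(y)$ with the claim that a symmetric normed space is ``therefore monotone with respect to $\prec\prec$'', citing \cite{KPS}. That implication is false: monotonicity under Hardy--Littlewood submajorization is the defining property of \emph{fully} symmetric spaces and, by the Calder\'on--Mityagin theorem, is equivalent to being an exact interpolation space for the couple $(L_1,L_\infty)$; it is classical that symmetric Banach function spaces exist which are not interpolation spaces for this couple. Nothing forces $(E,\|\cdot\|_{\widehat E})$ to be fully symmetric -- for instance, if $E$ is taken to be a symmetric Banach space that is not fully symmetric, then $\widehat E=E$ isometrically, so your claimed implication fails for it. The fact you actually need -- that $x\mapsto\|\mu(x)\|_{\widehat E}$ satisfies the triangle inequality on $\nonsp$ for an arbitrary symmetric \emph{Banach} space $(E,\|\cdot\|_{\widehat E})$ -- is true, but it is precisely the nontrivial Theorem 8.7 of Kalton--Sukochev \cite{KS}, which is what the paper invokes, and it cannot be recovered from submajorization. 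Replacing your submajorization sentence by that citation repairs the argument, after which your maximality step (Peetre's $\|\cdot\|^{*}$ is the largest seminorm dominated by $\|\cdot\|_{\nonsp}$) goes through exactly as in the paper.
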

\begin{proof}
Observe first that since $(E,\|\cdot\|_{\widehat{E}})$ is a symmetric normed space \cite{kammast}, given $x\in\nonsp$ the functional $|||x|||=\|\mu(x)\|_{\widehat{E}}$ is a norm on $\nonsp$ by \cite[Theorem 8.7]{KS}. Moreover, for any $x\in\nonsp$ we have that
\begin{equation}
\label{eq:thm1.1}
|||x|||=\|\mu(x)\|_{\widehat{E}}\leq \norme{\mu(x)}=\normcomm{x}.
\end{equation}
Since $\|\cdot\|_{\widehat{\nonsp}}$ is the largest seminorm satisfying (\ref{eq:thm1.1}), it follows that
\[
|||x|||=\|\mu(x)\|_{\widehat{E}}\leq\|x\|_{\widehat{\nonsp}}\ \ \ \ \text{for all $x\in\nonsp$}.
\]
For the converse let $x\in \nonsp$. We first observe that $\|x\|_{\widehat{\nonsp}}=\|\abs{x}\|_{\widehat{\nonsp}}$. To see this let $x=\sum_{i=1}^n x_i$ for some $\{x_i\}_{i=1}^n\subseteq \nonsp$. By the polar decomposition of $x$ there exists a partial isometry $u$  such that $x=u\abs{x}$ or equivalently $u^*x=\abs{x}$.  We have that $\abs{x}=\sum_{i=1}^n u^*x_i$ and so 
 \[
 \|\abs{x}\|_{\widehat{\nonsp}}\leq \sum_{i=1}^n\normcomm{u^*x_i}\leq \sum_{i=1}^n\normcomm{x_i}.
 \]
Consequently, $ \|\abs{x}\|_{\widehat{\nonsp}}\leq  \|x\|_{\widehat{\nonsp}}$. Similarly one can show the opposite inequality.

Therefore we will assume without loss of generality that $x\geq 0$.
Let $\epsilon>0$ and choose $\{f_i\}_{i=1}^n\subseteq E$  such that 
\begin{equation}
\label{eq:thm1.2}
\mu(x)=\sum_{i=1}^n f_i\quad \text{and}\quad \sum_{i=1}^n \norme{f_i} \le \|\mu(x)\|_{\widehat{E}}+\epsilon.
\end{equation}

By Corollary \ref{cor:isom} there exists a unital $*$-isomorphism $V$ acting from the $*$-algebra $S\left(\left[0,\tauone\right),m\right)$ into $S(\mathcal{N},\tau)$ with $\mathcal{N}\subseteq q\M q$, such that 
\[ V\mu(x)=xr+\mu(\infty, x)V\chi_{[\tau(r),\infty)}\ \ \ \text{ and }\ \ \ \mu(Vf)=\mu(f)\ \ \text{ for all } f\in S\left([0,\tauone),m\right),
\]
where $r=e^{x}(\mu(\infty,x),\infty)$. The projection $q=\one$ if $\tau(r)<\infty$ and $q=r$ otherwise.

Note that  $\mu(t,x)=\mu(\sum_{i=1}^n t/n,\sum_{i=1}^n f_i)\leq \sum_{i=1}^n \mu(t/n,f_i)$, and so $\mu(\infty,x)\leq \sum_{i=1}^n\mu(\infty,f_i)$. Therefore we can select constants $C_i\ge 0$ satisfying 
$\sum_{i=1}^n C_i=\mu(\infty,x)$ and $C_i\leq \mu(\infty,f_i)$, $i=1,2,\dots, n$.

We have $0\leq xr\leq V\mu(x)$ and $0\leq xr=r(xr)r\leq rV\mu(x)r$. Hence in view of (\ref{eq:thm1.2}), 
\begin{align*}
0\leq x&= xr+ xr^\perp\leq rV\mu(x)r+\mu(\infty,x)r^\perp=\sum_{i=1}^n \left(rVf_i r+C_ir^\perp\right).
\end{align*}
Since $r=s(xr)$, and $xr\in S(\mathcal{N},\tau)$ by Corollary  \ref{cor:isom}, we have that $r\in \mathcal{N}$. Hence there is $A\subseteq [0,\tauone)$ such that $V\chi_A=r$. Observe that $0\leq rVf_i r+C_ir^\perp \leq V(f_i \chi_A)+\mu(\infty, f_i) V\chi_{A^c}=V(f_i \chi_A+\mu(\infty, f_i)\chi_{A^c})$, and so $\mu(rVf_i r+C_ir^\perp )\leq \mu(f_i \chi_A+\mu(\infty, f_i)\chi_{A^c})\leq \mu(f_i)$.
Therefore by Proposition \ref{prop:1},
\[
\|x\|_{\widehat{\nonsp}}\leq \|rV\mu(x)r+\mu(\infty,x)r^\perp\|_{\widehat{\nonsp}}\leq \sum_{i=1}^n\normcomm{rVf_i r+C_ir^\perp}\leq  \sum_{i=1}^n\norme{f_i}\leq \|\mu(x)\|_{\widehat{E}}+\epsilon.
\] 
Since $\epsilon$ is arbitrary the claim follows.

\end{proof}

\begin{proposition}
\label{prop:final1}
 Let $\mathcal{M}$ be a non-atomic von Neumann algebra with a $\sigma$-finite trace $\tau$ and $E\in (HC)$ be a quasi-normed symmetric function space.  Then
\[
\left(\widehat{\nonsp},\|\cdot\|_{\widehat{\nonsp}}\right)\subseteq\left(\widehat{E}\Mtau,\|\cdot\|_{\widehat{E}\Mtau}\right),
\]
and for all $x\in \widehat{E\Mtau}$
\[\quad\|x\|_{\widehat{E\Mtau}}=\|x\|_{\widehat{E}\Mtau}.\]
Consequently, the embedding of $\widehat{\nonsp}$ into $S\Mtau$ is continuous with respect to the norm topology on $\widehat{\nonsp}$.

\end{proposition}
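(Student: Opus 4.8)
The plan is to leverage Theorem \ref{thm:1} together with Proposition \ref{prop:symmfun} and thereby realize $\widehat{\nonsp}$ as the closure of $\nonsp$ inside the Banach space $\widehat{E}\Mtau$. First I would record the ambient set inclusion. Since $E\in(HC)$, Proposition \ref{prop:symmfun} guarantees that $\widehat{E}$ is a Banach symmetric function space, and by Proposition \ref{prop:star} it may be regarded as a genuine symmetric subspace of $L_1+L_\infty$ containing $E$, with $\|f\|_{\widehat{E}}\le\|f\|_E$. Hence whenever $x\in\nonsp$, so that $\mu(x)\in E\subseteq\widehat{E}$, we obtain $x\in\widehat{E}\Mtau$; that is, $\nonsp\subseteq\widehat{E}\Mtau$ as sets. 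Because $\widehat{E}$ is a Banach symmetric space, the functional $\|x\|_{\widehat{E}\Mtau}=\|\mu(x)\|_{\widehat{E}}$ is a genuine norm and $\bigl(\widehat{E}\Mtau,\|\cdot\|_{\widehat{E}\Mtau}\bigr)$ is complete, by the completeness statement recorded in the preliminaries for noncommutative spaces built over a Banach symmetric space.

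Next I would check that this inclusion is isometric for the two relevant norms. For $x\in\nonsp$, Theorem \ref{thm:1} gives $\|x\|_{\widehat{\nonsp}}=\|\mu(x)\|_{\widehat{E}}$, while by definition $\|x\|_{\widehat{E}\Mtau}=\|\mu(x)\|_{\widehat{E}}$; hence $\|x\|_{\widehat{\nonsp}}=\|x\|_{\widehat{E}\Mtau}$ on all of $\nonsp$. In particular $\|\cdot\|_{\widehat{\nonsp}}$ is already a genuine norm on $\nonsp$: its null space is trivial, since $\|\mu(x)\|_{\widehat{E}}=0$ forces $\mu(x)=0$ in the genuine function space $\widehat{E}$ and therefore $x=0$. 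Consequently the quotient step in the Banach-envelope construction is vacuous here, and $\widehat{\nonsp}$ is simply the completion of $\bigl(\nonsp,\|\cdot\|_{\widehat{\nonsp}}\bigr)$.

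Then I would pass to completions. The map $\iota\colon\bigl(\nonsp,\|\cdot\|_{\widehat{\nonsp}}\bigr)\hookrightarrow\bigl(\widehat{E}\Mtau,\|\cdot\|_{\widehat{E}\Mtau}\bigr)$ is an isometric embedding of a normed space into a Banach space, so it extends uniquely to an isometric isomorphism of the completion $\widehat{\nonsp}$ onto the closure of $\nonsp$ inside $\widehat{E}\Mtau$. This yields the claimed inclusion $\widehat{\nonsp}\subseteq\widehat{E}\Mtau$ together with the identity $\|x\|_{\widehat{\nonsp}}=\|x\|_{\widehat{E}\Mtau}$ for every $x\in\widehat{\nonsp}$. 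Finally, continuity of the embedding into $S\Mtau$ comes for free: $\widehat{E}$ being a normed symmetric space, $\widehat{E}\Mtau$ embeds continuously into $S\Mtau$ by the $\mu$-convergence criterion \cite[Proposition 2.2]{DDP4} recalled before Lemma \ref{lm:measuretop}, and since $\widehat{\nonsp}$ carries the restricted norm, the composition $\widehat{\nonsp}\hookrightarrow\widehat{E}\Mtau\hookrightarrow S\Mtau$ is continuous. The only point requiring genuine care—rather than any real difficulty—is the identification of the abstract completion $\widehat{\nonsp}$ with the concrete closure inside $\widehat{E}\Mtau$; once the isometry on $\nonsp$ is in hand via Theorem \ref{thm:1}, this is the standard extension-by-density argument, so the substance of the proposition is entirely carried by the two cited results.
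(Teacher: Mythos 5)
Your proposal is correct and follows essentially the same route as the paper's own proof: both rest on Proposition \ref{prop:symmfun} (so that $\widehat{E}\Mtau$ is a Banach space), the isometry $\|x\|_{\widehat{\nonsp}}=\|\mu(x)\|_{\widehat{E}}=\|x\|_{\widehat{E}\Mtau}$ on $\nonsp$ from Theorem \ref{thm:1}, the standard identification of the completion of $\nonsp$ with its closure inside $\widehat{E}\Mtau$, and \cite[Proposition 2.2]{DDP4} for continuity of the embedding into $S\Mtau$. Your extra remarks (triviality of the null space, invoking Proposition \ref{prop:star} to view $\widehat{E}$ concretely) are harmless elaborations of steps the paper leaves implicit.
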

\begin{proof}
By Proposition \ref{prop:symmfun}, $\widehat{E}$ is asymmetric Banach function space, and so $\widehat{E}\Mtau$ is a Banach space \cite{KS}.
Since $E\subseteq \widehat{E}$ we have that $\nonsp\subseteq \widehat{E}\Mtau$. Moreover, by Theorem \ref{thm:1} we have that  $\|x\|_{\widehat{E}\Mtau}= \|x\|_{\widehat{\nonsp}}$ for all $x\in\nonsp$.
Thus  completion of $\nonsp$ with respect to the norm $\|\cdot\|_{\widehat{\nonsp}}$  is the same as the closure of $\nonsp$ in $\widehat{E}\Mtau$. Consequently,
\[
\widehat{\nonsp}\subseteq\widehat{E}\Mtau\quad\text{ and }\quad  \|x\|_{\widehat{E}\Mtau}= \|x\|_{\widehat{\nonsp}}\quad\text{ for all }\quad x\in\widehat{\nonsp}.
\]
The second part follows from the well known fact that since  $\widehat{E}$ is a Banach symmetric space then  $\widehat{E}\Mtau$ is continuously embedded into $S\Mtau$ \cite[Proposition 2.2]{DDP4}.
\end{proof}
\begin{lemma}
\label{lm:03}
Suppose that $\tau$ is a $\sigma$-finite trace on a non-atomic von Neumann algebra $\M$,    and $E\in(HC)$ is a quasi-normed symmetric function space. If $\|x_n-x\|_{\widehat{E}\Mtau}\to 0$ for $x\in\widehat{E}\Mtau$ and $\{x_n\}\subseteq \widehat{\nonsp}$ then $x\in \widehat{\nonsp}$ and $\|x_n-x\|_{\widehat{\nonsp}}\to 0$.
\end{lemma}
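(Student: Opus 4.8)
The plan is to recognize the statement as the assertion that $\widehat{\nonsp}$ is a \emph{closed} subspace of $\widehat{E}\Mtau$, and to establish it by the standard principle that a complete subspace of a normed space is closed. The decisive input is already in hand: Proposition \ref{prop:final1} supplies the inclusion $\widehat{\nonsp}\subseteq\widehat{E}\Mtau$ together with the isometric identity $\|y\|_{\widehat{E}\Mtau}=\|y\|_{\widehat{\nonsp}}$ valid for every $y\in\widehat{\nonsp}$, while Proposition \ref{prop:symmfun} ensures that $\widehat{E}$ is a Banach symmetric space, so that $\widehat{E}\Mtau$ is a genuine Banach space and limits in it are unique.

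First I would note that since $\|x_n-x\|_{\widehat{E}\Mtau}\to 0$, the sequence $\{x_n\}$ is Cauchy in $\widehat{E}\Mtau$. Invoking the isometry from Proposition \ref{prop:final1}, one has $\|x_n-x_m\|_{\widehat{\nonsp}}=\|x_n-x_m\|_{\widehat{E}\Mtau}$ for all $n,m$, so $\{x_n\}$ is Cauchy in $(\widehat{\nonsp},\|\cdot\|_{\widehat{\nonsp}})$ as well. Because $\widehat{\nonsp}$ is complete, being by construction the completion defining the Banach envelope, there exists $y\in\widehat{\nonsp}$ with $\|x_n-y\|_{\widehat{\nonsp}}\to 0$. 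Applying the same isometry once more yields $\|x_n-y\|_{\widehat{E}\Mtau}\to 0$.

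Finally I would appeal to uniqueness of limits in the Banach space $\widehat{E}\Mtau$: from $x_n\to x$ and $x_n\to y$ in $\|\cdot\|_{\widehat{E}\Mtau}$ we conclude $x=y$. Hence $x=y\in\widehat{\nonsp}$, and $\|x_n-x\|_{\widehat{\nonsp}}=\|x_n-y\|_{\widehat{\nonsp}}\to 0$, which is exactly the desired conclusion. There is no substantial obstacle in this argument; the only points requiring care are the consistent identification of an element of $\widehat{\nonsp}$ with its image in $\widehat{E}\Mtau$ under the isometric inclusion, and the simultaneous use of the completeness of the Banach envelope $\widehat{\nonsp}$ and the Banach-space structure of $\widehat{E}\Mtau$ guaranteed by Proposition \ref{prop:symmfun}.
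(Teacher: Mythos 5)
Your proof is correct and follows essentially the same route as the paper's: transfer Cauchyness to $\widehat{\nonsp}$ via the isometry of Proposition \ref{prop:final1}, use completeness of the Banach envelope to produce a limit $z\in\widehat{\nonsp}$, and then identify $z$ with $x$. The only (immaterial) difference is in that last identification step: the paper passes to the measure topology on $S\Mtau$ --- using that both $\widehat{E}\Mtau$ and $\widehat{\nonsp}$ embed continuously into $S\Mtau$ --- and invokes uniqueness of limits in measure, whereas you invoke uniqueness of norm limits directly in $\widehat{E}\Mtau$, which is legitimate once Proposition \ref{prop:final1} is read, as you note, as identifying $\widehat{\nonsp}$ isometrically with a subspace of $\widehat{E}\Mtau$.
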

\begin{proof}
Let $\|x_n-x\|_{\widehat{E}\Mtau}\to 0$, where $x\in\widehat{E}\Mtau$ and $\{x_n\}\subseteq \widehat{\nonsp}$. Then  $x_n\xrightarrow{\tau} x$  and by Proposition \ref{prop:final1}, $\|x_n-x_m\|_{\widehat{\nonsp}}=\|x_n-x_m\|_{\widehat{E}\Mtau}$, $m,n\in\mathbb N$. Hence $\{x_n\}$ is Cauchy in $\widehat{\nonsp}$ and there is $z\in\widehat{\nonsp}$ such that $\|z-x_n\|_{\widehat{\nonsp}}\to 0$. Again by Proposition \ref{prop:final1}, $x_n\xrightarrow{\tau} z$, and so $z=x$. Consequently, $x\in \widehat{\nonsp}$ and $\|x_n-x\|_{\widehat{\nonsp}}\to 0$.
\end{proof}
We observe next that $\widehat{\nonsp}$ is an ideal in $S\Mtau$. 
\begin{lemma}
\label{lm:idealoper}
Suppose that $\tau$ is a $\sigma$-finite trace on a non-atomic von Neumann algebra $\M$,    and $E\in(HC)$ is a quasi-normed symmetric function space. Let $x,y\in S\Mtau$ with $0\leq x\leq y$ and $y\in \widehat{\nonsp}$. Then $x\in\widehat{\nonsp}$ and $\|x\|_{\widehat{\nonsp}}\leq \|y\|_{\widehat{\nonsp}}$.
\end{lemma}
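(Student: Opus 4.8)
The plan is to reduce everything to the isometric inclusion $\widehat{\nonsp}\subseteq\widehat{E}\Mtau$ already established in Proposition \ref{prop:final1}, and to transport a sequence of approximants of $y$ lying inside $\nonsp$ over to $x$ by means of the factorization $x=z^{*}yz$ used in Proposition \ref{prop:1}. The main obstacle is precisely that membership of $x$ in the \emph{a priori} smaller space $\widehat{\nonsp}$ is \emph{not} the same as membership in $\widehat{E}\Mtau$: the latter follows immediately from the ideal property of the Banach symmetric space $\widehat{E}\Mtau$, but to land in $\widehat{\nonsp}$ I must exhibit $x$ as an $\widehat{E}\Mtau$-limit of elements of $\nonsp$ and then invoke Lemma \ref{lm:03}.

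First I would record that $x\in\widehat{E}\Mtau$ with the correct norm bound. Since $0\le x\le y$ one has $\mu(x)\le\mu(y)$, and by Proposition \ref{prop:final1} $y\in\widehat{E}\Mtau$, so $\mu(y)\in\widehat{E}$. As $\widehat{E}$ is a Banach symmetric function space by Proposition \ref{prop:symmfun}, it is an ideal in $L_0(I)$, whence $\mu(x)\in\widehat{E}$; that is, $x\in\widehat{E}\Mtau$, and
\[
\|x\|_{\widehat{E}\Mtau}=\|\mu(x)\|_{\widehat{E}}\le\|\mu(y)\|_{\widehat{E}}=\|y\|_{\widehat{E}\Mtau}=\|y\|_{\widehat{\nonsp}}.
\]

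Next I would construct the approximating sequence. By \cite[Proposition 4.5]{Pag}, since $0\le x\le y$ there is $z\in\M$ with $\|z\|_{\M}\le1$ and $\sqrt{x}=\sqrt{y}\,z$, so that $x=z^{*}yz$. Using the definition of the Banach envelope as the completion of $\nonsp$ under $\|\cdot\|_{\widehat{\nonsp}}$, choose $\{y_k\}\subseteq\nonsp$ with $\|y_k-y\|_{\widehat{\nonsp}}\to0$ and set $x_k=z^{*}y_kz$. The inequality $\mu(z^{*}wz)\le\|z\|_{\M}\|z^{*}\|_{\M}\,\mu(w)\le\mu(w)$ gives at once $\mu(x_k)\le\mu(y_k)\in E$, so $x_k\in\nonsp\subseteq\widehat{\nonsp}$; the same inequality together with Proposition \ref{prop:final1} yields
\[
\|x_k-x\|_{\widehat{E}\Mtau}=\|z^{*}(y_k-y)z\|_{\widehat{E}\Mtau}\le\|y_k-y\|_{\widehat{E}\Mtau}=\|y_k-y\|_{\widehat{\nonsp}}\to0.
\]

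Finally I would close the argument with Lemma \ref{lm:03}: having $x\in\widehat{E}\Mtau$, $\{x_k\}\subseteq\widehat{\nonsp}$ and $\|x_k-x\|_{\widehat{E}\Mtau}\to0$, it follows that $x\in\widehat{\nonsp}$ and $\|x_k-x\|_{\widehat{\nonsp}}\to0$. The norm estimate is then free: since $x$ and $y$ now both lie in $\widehat{\nonsp}$, Proposition \ref{prop:final1} identifies the two norms and the bound from the first step becomes $\|x\|_{\widehat{\nonsp}}=\|x\|_{\widehat{E}\Mtau}\le\|y\|_{\widehat{E}\Mtau}=\|y\|_{\widehat{\nonsp}}$. (Alternatively the estimate $\|x_k\|_{\widehat{\nonsp}}\le\|y_k\|_{\widehat{\nonsp}}$, obtained exactly as in Proposition \ref{prop:1} by decomposing $y_k$ and applying $z^{*}(\cdot)z$, passes to the limit.) I expect no difficulty beyond keeping the two envelope norms carefully distinguished until Lemma \ref{lm:03} collapses them.
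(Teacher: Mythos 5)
Your proof is correct and takes essentially the same route as the paper's: the factorization $x=z^{*}yz$ from \cite[Proposition 4.5]{Pag}, transporting approximants $y_n\in\nonsp$ of $y$ to $x_n=z^{*}y_nz\in\nonsp$, checking $\|x_n-x\|_{\widehat{E}\Mtau}\to 0$, invoking Lemma \ref{lm:03}, and then getting the norm inequality from the isometric identification in Proposition \ref{prop:final1}. The only cosmetic difference is that you establish $x\in\widehat{E}\Mtau$ upfront via $\mu(x)\leq\mu(y)$ and the ideal property of $\widehat{E}$, whereas the paper reads the same membership off the factorization itself.
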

\begin{proof}
Notice first by Proposition \ref{prop:final1} we have that $\widehat{\nonsp}\subseteq S\Mtau$.
Let $x\in S\Mtau$, $y\in \widehat{\nonsp}$ and $0\leq x\leq y$. As noted in the proof of Proposition \ref{prop:1} there exists  $z\in \M$, $\|z\|_{\M}\leq 1$, such that $x=z^*yz$.
Let $\{y_n\}\subseteq \nonsp$ be such that $\|y-y_n\|_{\widehat{\nonsp}}\to 0$. By Proposition \ref{prop:final1},
\[
\|y-y_n\|_{\widehat{E}\Mtau}=\|y-y_n\|_{\widehat{\nonsp}}\to 0.
\]
 We have $z^*y_n z\in \nonsp\subset \widehat{E(\mathcal{M},\tau)}$ and $y\in \widehat{E}(\mathcal{M},\tau)$ by Proposition \ref{prop:final1}.  Thus $x=z^*yz\in \widehat{E}(\mathcal{M},\tau)$ and
\begin{align}
\label{eq:lmidealopereq1}
\|x-z^*y_nz\|_{\widehat{E}\Mtau}&=\|\mu\left(z^*(y-y_n)z\right)\|_{\widehat{E}}
\leq \|z\|_{\M}\|z^*\|_{\M}\|y-y_n\|_{\widehat{E}\Mtau}\to 0.
\end{align}
Hence by Lemma \ref{lm:03},  $x\in\widehat{\nonsp}$. The norm inequality follows instantly by Proposition \ref{prop:final1}, 
\[
\|x\|_{\widehat{\nonsp}}=\|x\|_{\widehat{E}\Mtau}\leq \|y\|_{\widehat{E}\Mtau}=\|y\|_{\widehat{\nonsp}}.
\]
\end{proof}

\begin{lemma}
\label{lm:complitionisom}
Let $E\in(HC)$ be a quasi-normed symmetric function space, and $\mathcal{M}$ be a non-atomic von Neumann algebra with a $\sigma$-finite trace $\tau$. Suppose that $V$ is a $*$-isomorphism from $S\left([0,\tauone),m\right)$ into  $S\Mtau$ such that $\mu(Vf)=\mu(f)$ for all $f\in S\left([0,\tauone),m\right)$. If $x\in\widehat{E}\Mtau$ then $V\mu(x)\in\widehat{\nonsp}$. 
\end{lemma}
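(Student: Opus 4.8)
The plan is to approximate $\mu(x)$ inside $\widehat{E}$ by members of $E$, to transport these approximants over to $\nonsp$ by applying $V$, and then to close up with Lemma~\ref{lm:03}. First I would observe that since $x\in\widehat{E}\Mtau$, by definition $\mu(x)\in\widehat{E}$; moreover $\mu(x)$ is nonnegative and decreasing, so $\mu(V\mu(x))=\mu(\mu(x))=\mu(x)\in\widehat{E}$, which already shows $V\mu(x)\in\widehat{E}\Mtau$ and makes the norm $\|V\mu(x)\|_{\widehat{E}\Mtau}$ meaningful. Because $E^*$ separates points, $\|\cdot\|_{\widehat{E}}$ is a genuine norm and $\widehat{E}$ is, by construction, the completion of $(E,\|\cdot\|_{\widehat{E}})$; hence there is a sequence $\{f_n\}\subseteq E$ with $\|f_n-\mu(x)\|_{\widehat{E}}\to 0$.

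Next I would verify that the transported approximants lie in $\nonsp$ and then estimate their distance to $V\mu(x)$. Since $V$ preserves singular value functions, $\mu(Vf_n)=\mu(f_n)$, and as $E$ is symmetric we have $\mu(f_n)\in E$; therefore $Vf_n\in\nonsp\subseteq\widehat{\nonsp}$. For the distance, both $f_n$ and $\mu(x)$ belong to $\widehat{E}\subseteq L_1+L_\infty\subseteq S([0,\tauone),m)$, so their difference lies in the domain of $V$, and linearity of the $*$-isomorphism gives $Vf_n-V\mu(x)=V(f_n-\mu(x))$. Using the definition of the norm on $\widehat{E}\Mtau$, the hypothesis $\mu(Vg)=\mu(g)$, and the symmetry of $\widehat{E}$ furnished by Proposition~\ref{prop:symmfun}, I would compute
\[
\|Vf_n-V\mu(x)\|_{\widehat{E}\Mtau}=\bigl\|\mu\bigl(V(f_n-\mu(x))\bigr)\bigr\|_{\widehat{E}}=\|\mu(f_n-\mu(x))\|_{\widehat{E}}=\|f_n-\mu(x)\|_{\widehat{E}}\to 0.
\]

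Finally I would assemble the pieces: $\{Vf_n\}\subseteq\widehat{\nonsp}$, the limit $V\mu(x)$ lies in $\widehat{E}\Mtau$, and $\|Vf_n-V\mu(x)\|_{\widehat{E}\Mtau}\to 0$, so Lemma~\ref{lm:03} yields $V\mu(x)\in\widehat{\nonsp}$ (indeed with $\|Vf_n-V\mu(x)\|_{\widehat{\nonsp}}\to 0$), which is the assertion. I expect the only delicate points to be bookkeeping rather than conceptual: checking that $f_n-\mu(x)$ genuinely sits in the domain $S([0,\tauone),m)$ on which $V$ is defined, so that $V$ may be applied termwise, and making sure each of the three successive equalities in the displayed estimate rests on an established fact, namely the definition of $\|\cdot\|_{\widehat{E}\Mtau}$, the hypothesis $\mu(Vg)=\mu(g)$, and the symmetry of $\widehat{E}$. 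Everything else follows the same ``approximate, transport, complete'' pattern already employed in Lemma~\ref{lm:idealoper}.
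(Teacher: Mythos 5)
Your proposal is correct and follows essentially the same route as the paper's own proof: approximate $\mu(x)$ by a sequence $\{f_n\}\subseteq E$ in the $\|\cdot\|_{\widehat{E}}$-norm, transport via $V$ to get $\{Vf_n\}\subseteq\nonsp\subseteq\widehat{\nonsp}$, show $\|Vf_n-V\mu(x)\|_{\widehat{E}\Mtau}\to 0$ using the rearrangement-preserving property of $V$ and the symmetry of $\widehat{E}$, and conclude by Lemma~\ref{lm:03}. Your extra bookkeeping (verifying $V\mu(x)\in\widehat{E}\Mtau$ so Lemma~\ref{lm:03} applies, and that $f_n-\mu(x)$ lies in the domain of $V$) is sound and only makes explicit what the paper leaves implicit.
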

\begin{proof}
Let $x\in \widehat{E}\Mtau$ that is $x\in S\Mtau$ and $\mu(x)\in \widehat{E}$. Then there is $\{f_n\}\subseteq E$ such that $\|\mu(x)-f_n\|_{\widehat{E}}\to 0$. Therefore
\[
\|V\mu(x)-Vf_n\|_{\widehat{E}\Mtau} = \|\mu(V\mu(x)-Vf_n)\|_{\widehat{E}}= \|\mu(\mu(x)-f_n)\|_{\widehat{E}}= \|\mu(x)-f_n\|_{\widehat{E}}\to 0.
\]
Moreover, $Vf_n\in S\Mtau$ and $\mu(Vf_n)=\mu(f_n)\in E$. Therefore  $\{Vf_n\}\subseteq \nonsp\subseteq \widehat{\nonsp}$, and by Lemma \ref{lm:03} it follows that $V\mu(x)\in \widehat{\nonsp}$.

\end{proof}

\begin{lemma}
\label{lm:absolutvalue}
Let $\tau$ be a $\sigma$-finite trace on a non-atomic von Neumann algebra $\M$ and $E\in(HC)$ be a quasi-normed symmetric function space. Then $x\in \widehat{\nonsp}$ if and only if $\abs{x}\in\widehat{\nonsp}$, and $\|x\|_{\widehat{\nonsp}} = \|\,|x|\,\|_{\widehat{\nonsp}}$.
\end{lemma}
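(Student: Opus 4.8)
The plan is to reduce the whole statement to one auxiliary fact---that left multiplication by a contraction in $\M$ is continuous on the envelope space $\widehat{\nonsp}$---and then to apply it to the two factors of the polar decomposition. Throughout I would use Proposition \ref{prop:final1}, which realizes every abstract element of $\widehat{\nonsp}$ as a genuine $\tau$-measurable operator in $\widehat{E}\Mtau$, with the norms $\|\cdot\|_{\widehat{\nonsp}}$ and $\|\cdot\|_{\widehat{E}\Mtau}$ agreeing on $\widehat{\nonsp}$.

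First I would prove the auxiliary claim: if $a\in\M$ with $\|a\|_\M\le 1$ and $x\in\widehat{\nonsp}$, then $ax\in\widehat{\nonsp}$ and $\|ax\|_{\widehat{\nonsp}}\le\|x\|_{\widehat{\nonsp}}$. To this end I would choose $\{x_n\}\subseteq\nonsp$ with $\|x_n-x\|_{\widehat{\nonsp}}\to 0$, so that $\|x_n-x\|_{\widehat{E}\Mtau}\to 0$ by Proposition \ref{prop:final1}. Since $\widehat{E}$ is a Banach symmetric space (Proposition \ref{prop:symmfun}), $\widehat{E}\Mtau$ is a left $\M$-module, and the submajorization estimate $\mu(a(x_n-x))\le\|a\|_\M\,\mu(x_n-x)$ already used in Proposition \ref{prop:1} gives $\|ax_n-ax\|_{\widehat{E}\Mtau}\le\|x_n-x\|_{\widehat{E}\Mtau}\to 0$; in particular $ax\in\widehat{E}\Mtau$. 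The same estimate shows $\mu(ax_n)\le\|a\|_\M\,\mu(x_n)\in E$, so $ax_n\in\nonsp\subseteq\widehat{\nonsp}$. Hence Lemma \ref{lm:03} yields $ax\in\widehat{\nonsp}$ with $\|ax_n-ax\|_{\widehat{\nonsp}}\to 0$, and since $\|ax_n\|_{\widehat{\nonsp}}=\|\mu(ax_n)\|_{\widehat{E}}\le\|\mu(x_n)\|_{\widehat{E}}=\|x_n\|_{\widehat{\nonsp}}$, letting $n\to\infty$ delivers the norm bound.

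Next I would invoke the polar decomposition $x=u\abs{x}$, where $u\in\M$ is a partial isometry and hence $\abs{x}=u^*x$, with both $u$ and $u^*$ contractions in $\M$ (exactly as in the proof of Theorem \ref{thm:1}). If $x\in\widehat{\nonsp}$, applying the auxiliary claim to $u^*$ gives $\abs{x}=u^*x\in\widehat{\nonsp}$ and $\|\abs{x}\|_{\widehat{\nonsp}}\le\|x\|_{\widehat{\nonsp}}$. Conversely, if $\abs{x}\in\widehat{\nonsp}$, applying it to $u$ gives $x=u\abs{x}\in\widehat{\nonsp}$ and $\|x\|_{\widehat{\nonsp}}\le\|\abs{x}\|_{\widehat{\nonsp}}$. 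The two inequalities together give both the equivalence and the equality $\|x\|_{\widehat{\nonsp}}=\|\abs{x}\|_{\widehat{\nonsp}}$.

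The only genuinely delicate point is the transfer across the completion: an abstract $x\in\widehat{\nonsp}$ must first be realized concretely so that ``$ax$'' and the polar decomposition even make sense, and one must confirm that the approximants $ax_n$ converge back into $\widehat{\nonsp}$ and not merely into the larger space $\widehat{E}\Mtau$. Proposition \ref{prop:final1} supplies the concrete realization and Lemma \ref{lm:03} supplies the return into $\widehat{\nonsp}$, so no new machinery is required---the analytic core is just the standard estimate $\mu(ax)\le\|a\|_\M\,\mu(x)$ already exploited in Proposition \ref{prop:1} and Lemma \ref{lm:idealoper}.
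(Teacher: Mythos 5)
Your proposal is correct and follows essentially the same route as the paper: realize $x$ concretely via Proposition \ref{prop:final1}, use the polar decomposition $x=u\abs{x}$, push an approximating sequence from $\nonsp$ through multiplication by the contraction $u^*$ (resp.\ $u$) using $\mu(az)\le\|a\|_\M\,\mu(z)$, and return to $\widehat{\nonsp}$ via Lemma \ref{lm:03}. The only cosmetic differences are that you package the argument as a reusable claim for arbitrary contractions and obtain the norm equality from the two inequalities, whereas the paper gets it in one line from $\mu(x)=\mu(\abs{x})$ and the norm identity of Proposition \ref{prop:final1}.
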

\begin{proof}
Let $x\in\widehat{\nonsp}$ and $\{x_n\}\subseteq \nonsp$ be such that $\|x-x_n\|_{\widehat{\nonsp}}\to 0$. Then by Proposition \ref{prop:final1},  $x, x_n \in \widehat{E}\Mtau$, $n\in\mathbb{N}$. 
Letting $x=u\abs{x}$ be the polar decomposition of $x$,  $u^*x_n\in\nonsp$ and  $|x| = u^*x\in \widehat{E}\Mtau$. By Proposition \ref{prop:final1},
\[
\||x|-u^*x_n\|_{\widehat{E}\Mtau}=\|u^*x-u^*x_n\|_{\widehat{E}\Mtau}\leq \|x-x_n\|_{\widehat{E}\Mtau}=\|x-x_n\|_{\widehat{\nonsp}}\to 0.
\]
Since $\{u^*x_n\}\subseteq \nonsp\subseteq\widehat{\nonsp}$, Lemma \ref{lm:03} implies that $\abs{x}\in\widehat{\nonsp}$.
The converse can be proved similarly.

By Proposition \ref{prop:final1} we have the equality of norms, since $\|x\|_{\widehat{\nonsp}}=\|x\|_{\widehat{E}\Mtau}=\|\abs{x}\|_{\widehat{E}\Mtau}=\|\abs{x}\|_{\widehat{\nonsp}}$.
\end{proof}

\begin{proposition}
\label{prop:opersymm}
Let $\tau$ be a $\sigma$-finite trace on a non-atomic von Neumann algebra $\M$  and  $E\in(HC)$ be a quasi-normed symmetric function space. Then for $x,y\in S\Mtau$, with $y\in \widehat{\nonsp}$, and $\mu(x)\leq \mu(y)$ we have that $x\in \widehat{\nonsp}$ and $\|x\|_{\widehat{\nonsp}}\leq \|y\|_{\widehat{\nonsp}}$.
\end{proposition}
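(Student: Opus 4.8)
The plan is to reduce to positive operators, dominate $x$ in the operator order by an explicit element of $\widehat{\nonsp}$ manufactured through the isomorphism of Corollary~\ref{cor:isom}, and then invoke the ideal property of \lemref{lm:idealoper}. First I would use \lemref{lm:absolutvalue} to pass to $x,y\ge 0$: since $\mu(\abs{x})=\mu(x)\le\mu(y)=\mu(\abs{y})$ and both membership in $\widehat{\nonsp}$ and the value of $\|\cdot\|_{\widehat{\nonsp}}$ are insensitive to taking absolute values, it suffices to treat $0\le x$ and $0\le y\in\widehat{\nonsp}$. By \propref{prop:final1} we have $y\in\widehat{E}\Mtau$, hence $\mu(y)\in\widehat{E}$; since $\mu(x)\le\mu(y)$ and $\widehat{E}$ is a symmetric Banach function space (\propref{prop:symmfun}), hence an ideal in $L_0$, we obtain $\mu(x)\in\widehat{E}$, i.e. $x\in\widehat{E}\Mtau$. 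Note that this already secures the norm estimate once membership is established: by \propref{prop:final1} and monotonicity of $\|\cdot\|_{\widehat{E}}$ one has $\|x\|_{\widehat{\nonsp}}=\|\mu(x)\|_{\widehat{E}}\le\|\mu(y)\|_{\widehat{E}}=\|y\|_{\widehat{\nonsp}}$.

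The core is therefore to prove $x\in\widehat{\nonsp}$. I would apply Corollary~\ref{cor:isom} to $x$, obtaining a commutative $\mathcal N\subseteq q\M q$ and a unital $*$-isomorphism $V$ with $\mu(Vf)=\mu(f)$ and $V\mu(x)=xr+\mu(\infty,x)V\chi_{[\tau(r),\infty)}$, where $r=e^{x}(\mu(\infty,x),\infty)$. Since $x\in\widehat{E}\Mtau$, \lemref{lm:complitionisom} gives $V\mu(x)\in\widehat{\nonsp}$. Because $r$ is a spectral projection of $x$ it commutes with $x$, so $x=xr+xr^\perp$ with both summands positive. One verifies, from the two cases $\tau(r)<\infty$ and $\tau(r)=\infty$ of Corollary~\ref{cor:isom}, that $xr=rV\mu(x)r$; and since $r$ and $V\mu(x)$ lie in the commutative algebra $\mathcal N$ and $V$ is positive, $0\le rV\mu(x)r\le V\mu(x)$. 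On $r^\perp=e^{x}[0,\mu(\infty,x)]$ the spectrum of $x$ does not exceed $\mu(\infty,x)$, so $0\le xr^\perp\le\mu(\infty,x)r^\perp\le\mu(\infty,x)\one$. Adding these two operator inequalities yields the domination
\[
0\le x\le V\mu(x)+\mu(\infty,x)\one.
\]

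The one genuinely delicate point will be to show that the residual term $\mu(\infty,x)\one$ belongs to $\widehat{\nonsp}$; this is precisely what the isomorphism attached to $x$ fails to capture when $\tau(r)=\infty$, and it is vacuous (the term being $0$) whenever $\mu(\infty,x)=0$, in particular in the sequence case $E\subseteq c_0$. When $\mu(\infty,x)>0$ one necessarily has $\tauone=\infty$, the constant function $\mu(\infty,x)\chi_{[0,\tauone)}=\mu(\mu(\infty,x)\one)$ is dominated by $\mu(x)\in\widehat{E}$ and hence lies in $\widehat{E}$, so $\mu(\infty,x)\one\in\widehat{E}\Mtau$. I would then apply Corollary~\ref{cor:isom} to the scalar operator $\mu(\infty,x)\one$: here $e^{\mu(\infty,x)\one}(\mu(\infty,x),\infty)=0$, so the relevant projection has trace $0$ and $q=\one$, producing a \emph{unital} isomorphism $V'$ into $S\Mtau$ with $V'\mu(\mu(\infty,x)\one)=\mu(\infty,x)\one$. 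By \lemref{lm:complitionisom} this gives $\mu(\infty,x)\one\in\widehat{\nonsp}$.

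Consequently $V\mu(x)+\mu(\infty,x)\one$ lies in the vector space $\widehat{\nonsp}$, and \lemref{lm:idealoper} applied to the displayed domination yields $x\in\widehat{\nonsp}$. Together with the norm estimate recorded in the first paragraph, this completes the argument. The whole scheme thus splits the positive operator into its ``compact'' part above level $\mu(\infty,x)$, controlled directly by $V\mu(x)$, and its ``constant'' part at level $\mu(\infty,x)$, controlled by a second, unital copy of the function space inside $\M$; assembling both through the operator-order ideal property is what upgrades the operator-order monotonicity of \lemref{lm:idealoper} to the full rearrangement monotonicity claimed.
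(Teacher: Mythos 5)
Your proof is correct and follows essentially the same route as the paper's: reduction to positive operators via Lemma~\ref{lm:absolutvalue}, Corollary~\ref{cor:isom} applied to $x$ combined with Lemma~\ref{lm:complitionisom}, a separate argument (via a unital rearrangement-preserving copy of the function space, exactly as in the paper's Case 2) to place the residual term at level $\mu(\infty,x)$ inside $\widehat{\nonsp}$, and Lemma~\ref{lm:idealoper} to conclude, with the norm inequality from Proposition~\ref{prop:final1}. The only cosmetic differences are that you bundle everything into the single domination $0\le x\le V\mu(x)+\mu(\infty,x)\one$ and invoke the ideal lemma once (the paper treats $xr$ and $xr^{\perp}$ separately, applying it three times), and you certify $V\mu(x)\in\widehat{\nonsp}$ directly from $x\in\widehat{E}\Mtau$ rather than from the comparison $V\mu(x)\le V\mu(y)$ with $V\mu(y)\in\widehat{\nonsp}$.
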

\begin{proof}
By Proposition \ref{prop:final1}, $\widehat{\nonsp}\subseteq \widehat{E}\Mtau\subseteq S\Mtau$. Let $\mu(x)\leq \mu(y)$, where $x,y\in S\Mtau$ and  $y\in\widehat{\nonsp}$. Since $\mu(x)=\mu(\abs{x})$ and $\mu(y)=\mu(\abs{y})$, Lemma \ref{lm:absolutvalue} allows to assume that $x,y\geq 0$.

Let $r=e^{\abs{x}}(\mu(\infty, x),\infty)$ and $q$ be like in Corollary \ref{cor:isom}. That is if $\tau(r)<\infty$ then $q=\one$, and if $\tau(r)=\infty$ then $q=r$. By Corollary \ref{cor:isom}, there is a $*$-isomorphism $V$ from $S\left([0,\tauone),m\right)$ into  $S(\mathcal{N},\tau)\subseteq qS\Mtau q$, preserving a decreasing rearrangement, and such that 
\[
V\mu(x)=xr+\mu(\infty,x)V\chi_{[\tau(r),\infty)}.
\]
Since $*$-isomorphism preserves the order,  $0\leq xr\leq V\mu(x)\leq V\mu(y)$. By Lemma \ref{lm:complitionisom} and by $y\in \widehat{E}\Mtau$ we have that $V\mu(y)\in \widehat{\nonsp}$. Thus Lemma \ref{lm:idealoper} implies that $xr\in \widehat{\nonsp}$.

Case 1. If $\mu(\infty, x)=0$ then $r=e^{\abs{x}}(0,\infty)=s(x)$ and $x=xr\in\widehat{\nonsp}$.

Case 2. If $\mu(\infty,x)> 0$, then $\tauone =\infty$ and since $\mu(x)\in \widehat{E}$ we must have $\chi_{[0,\infty)}\in \widehat{E}$. Hence $\one\in\widehat{E}\Mtau$. Let $V_1$ be a $*$-isomorphism from $S([0,\infty),m)$ to $S\Mtau$ such that $V_1\chi_{[0,\infty)}=\one$ and $\mu(V_1f)=\mu(f)$ for all $f\in S([0,\infty),m)$. Such an isomorphism can be chosen by applying Proposition \ref{prop:isom} to an operator $z\in S\Mtau$ with $\tau(s(z))<\infty$. Then by Lemma \ref{lm:complitionisom}, $\one=V_1\chi_{[0,\infty)}\in\widehat{\nonsp}$. Therefore in view of $0\leq r^\perp \leq \one$ and by Lemma \ref{lm:idealoper}, $r^\perp\in \widehat{\nonsp}$. Since $0\leq xr^\perp\leq \mu(\infty,x)r ^\perp$,
it follows that $xr^\perp \in \widehat{\nonsp}$. Consequently, $x=xr + xr^\perp\in\widehat{\nonsp}$.

Hence $x,y\in\widehat{\nonsp}$ and by Proposition \ref{prop:final1} and the fact that $\widehat{E}$ is symmetric we have
\[
\|x\|_{\widehat{\nonsp}}=\|\mu(x)\|_{\widehat{E}}\leq \|\mu(y)\|_{\widehat{E}}=\|y\|_{\widehat{\nonsp}}.
\]
\end{proof}

We are ready now for the the counterpart of Proposition \ref{prop:final1}.

\begin{proposition}
\label{prop:final2}
Let $\M$ be a non-atomic von Neumann algebra with a $\sigma$-finite trace $\tau$   and $E\in(HC)$ be a quasi-normed symmetric function space. Then
\[
\left(\widehat{E}\Mtau,\|\cdot\|_{\widehat{E}\Mtau}\right)\subseteq \left(\widehat{\nonsp},\|\cdot\|_{\widehat{\nonsp}}\right),
\]
and for all $x\in \widehat{E}\Mtau$,
\[\quad\|x\|_{\widehat{E\Mtau}}=\|x\|_{\widehat{E}\Mtau}.\]
\end{proposition}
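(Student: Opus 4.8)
The plan is to prove the reverse inclusion to Proposition \ref{prop:final1}, namely $\widehat{E}\Mtau\subseteq\widehat{\nonsp}$; once that is in hand, the isometric identity costs nothing. Indeed, as soon as I know that an arbitrary $x\in\widehat{E}\Mtau$ already belongs to $\widehat{\nonsp}$, Proposition \ref{prop:final1}---whose norm equality is valid for \emph{every} element of $\widehat{\nonsp}$---immediately yields $\|x\|_{\widehat{E\Mtau}}=\|x\|_{\widehat{E}\Mtau}$. So the entire substance of the statement is the set-theoretic inclusion, and together with Proposition \ref{prop:final1} it will establish $\widehat{\nonsp}=\widehat{E}\Mtau$ isometrically.

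To carry out the inclusion, I would fix $x\in\widehat{E}\Mtau$, so that $x\in S\Mtau$ and $\mu(x)\in\widehat{E}$ (note $\mu(x)\in S([0,\tauone),m)$, being a finite decreasing rearrangement). The idea is to manufacture inside $\widehat{\nonsp}$ an operator whose generalized singular value function coincides with $\mu(x)$, and then to invoke the rearrangement-domination principle. Concretely, apply Corollary \ref{cor:isom} to $x$ to obtain a unital rearrangement-preserving $*$-isomorphism $V$ from $S([0,\tauone),m)$ into $S\Mtau$, i.e. one with $\mu(Vf)=\mu(f)$ for all $f\in S([0,\tauone),m)$; only this $\mu$-preservation will be used. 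Put $y=V\mu(x)$. Because $x\in\widehat{E}\Mtau$, Lemma \ref{lm:complitionisom} gives $y=V\mu(x)\in\widehat{\nonsp}$, while $\mu$-preservation gives $\mu(y)=\mu(\mu(x))=\mu(x)$. Thus $y\in\widehat{\nonsp}$ and $\mu(x)\leq\mu(y)$, and Proposition \ref{prop:opersymm} forces $x\in\widehat{\nonsp}$ with $\|x\|_{\widehat{\nonsp}}\leq\|y\|_{\widehat{\nonsp}}$. This proves $\widehat{E}\Mtau\subseteq\widehat{\nonsp}$, and applying Proposition \ref{prop:final1} to this now-legitimate element $x\in\widehat{\nonsp}$ delivers $\|x\|_{\widehat{E\Mtau}}=\|x\|_{\widehat{E}\Mtau}$ for every $x\in\widehat{E}\Mtau$.

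I do not expect a genuine obstacle here, since the hard analysis has been distributed among the preceding results: the transfer of $\widehat{E}$-membership across the isomorphism $V$ (Lemma \ref{lm:complitionisom}), which itself rests on the ideal property (Lemma \ref{lm:idealoper}) and the completion argument (Lemma \ref{lm:03}), and the rearrangement domination (Proposition \ref{prop:opersymm}), which already absorbs the delicate case $\mu(\infty,x)>0$ via its own two-case analysis. The only point deserving a moment's attention is confirming that a single $\mu$-preserving $*$-isomorphism $V$ on all of $S([0,\tauone),m)$ is genuinely available rather than one merely adapted to a particular element; this is precisely what Corollary \ref{cor:isom} guarantees, the choice of seed operator being immaterial because only the relation $\mu(Vf)=\mu(f)$ is invoked.
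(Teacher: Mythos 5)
Your proposal is correct and follows essentially the same route as the paper's own proof: take a $\mu$-preserving $*$-isomorphism $V$, use Lemma \ref{lm:complitionisom} to place $V\mu(x)$ in $\widehat{\nonsp}$, transfer membership to $x$ via Proposition \ref{prop:opersymm} since $x$ and $V\mu(x)$ are equimeasurable, and conclude the norm equality from Proposition \ref{prop:final1}. Your extra remark pinning down the existence of $V$ through Corollary \ref{cor:isom} is a harmless (and reasonable) elaboration of a point the paper leaves implicit.
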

\begin{proof}
Let $x\in \widehat{E}\Mtau$. By Lemma \ref{lm:complitionisom}, $V\mu(x)\in \widehat{\nonsp}$, where $V$ is a $*$ - isomorphism from $S\left([0,\tauone),m\right)$ into $S\Mtau$ such that $\mu(Vf)=\mu(f)$ for all $f\in S\left([0,\tauone),m\right)$. Since operators $x$ and $V\mu(x)$ have the same singular value function, in view of Proposition \ref{prop:opersymm}, $x\in \widehat{\nonsp}$.
The equality of norms follows now instantly by Proposition \ref{prop:final1}.
\end{proof}

As a consequence of Propositions \ref{prop:final1} and \ref{prop:final2} we  state our main result.

\begin{theorem}
\label{thm:main}
Let $\M$ be a non-atomic von Neumann algebra with a $\sigma$-finite trace $\tau$   and $E\in(HC)$ be a quasi-normed symmetric function space.  Then
\[
\left(\widehat{E}\Mtau,\|\cdot\|_{\widehat{E}\Mtau}\right)= \left(\widehat{\nonsp},\|\cdot\|_{\widehat{\nonsp}}\right)
\]
with equality of norms.
\end{theorem}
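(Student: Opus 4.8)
The plan is to simply combine the two inclusions established in Propositions \ref{prop:final1} and \ref{prop:final2}. Since the excerpt explicitly presents Theorem \ref{thm:main} as a consequence of these two propositions, essentially all of the analytic work has already been carried out upstream, and what remains is a routine synthesis rather than a fresh argument.

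First I would apply Proposition \ref{prop:final1}, which yields the set inclusion $\widehat{\nonsp}\subseteq\widehat{E}\Mtau$ together with the isometric identity $\|x\|_{\widehat{\nonsp}}=\|x\|_{\widehat{E}\Mtau}$ valid for every $x\in\widehat{\nonsp}$. Next I would invoke Proposition \ref{prop:final2}, which supplies the opposite inclusion $\widehat{E}\Mtau\subseteq\widehat{\nonsp}$ and the same norm identity $\|x\|_{\widehat{E}\Mtau}=\|x\|_{\widehat{\nonsp}}$ for every $x\in\widehat{E}\Mtau$. Combining the two inclusions gives the equality of the underlying sets $\widehat{\nonsp}=\widehat{E}\Mtau$; and since both propositions deliver genuine equality of the norms (not merely equivalence up to a constant), the two norms agree on this common space, so the identity map is an isometry and the two Banach spaces coincide isometrically, which is precisely the assertion of the theorem.

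I do not anticipate any real obstacle at this stage: the difficult ingredients are all already in place. The embedding $\widehat{\nonsp}\hookrightarrow\widehat{E}\Mtau$ rests on the norm computation of Theorem \ref{thm:1} and the completion argument of Proposition \ref{prop:final1}, while the reverse embedding in Proposition \ref{prop:final2} is built from the $*$-isomorphism of Corollary \ref{cor:isom}, the ideal property (Lemma \ref{lm:idealoper}), and the rearrangement-monotonicity of Proposition \ref{prop:opersymm}. The one point worth verifying is the consistency of the two norm identities on the overlap, but this is immediate since both reduce to $\|x\|_{\widehat{E}\Mtau}=\|\mu(x)\|_{\widehat{E}}$ through Theorem \ref{thm:1}, so no gap arises when the two propositions are concatenated.
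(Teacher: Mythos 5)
Your proposal is correct and coincides with the paper's own treatment: the paper states Theorem \ref{thm:main} precisely as an immediate consequence of Propositions \ref{prop:final1} and \ref{prop:final2}, with no additional argument beyond concatenating the two inclusions and their respective norm identities. Your extra check that both identities reduce consistently to $\|x\|_{\widehat{\nonsp}}=\|\mu(x)\|_{\widehat{E}}$ via Theorem \ref{thm:1} is sound, though the paper does not even spell this out.
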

We give now the  variant of Theorem \ref{thm:1} for the unitary matrix spaces $C_E$. Recall that so far for the unitary matrix spaces the result has been obtained in \cite{naw} under additional  assumption that $E$ is weakly-separable. 
\begin{theorem}
\label{thm:2p1}
Let $E$ be a quasi-normed  symmetric sequence space  such that $E\subseteq c_0$. Then for all $x\in C_E$,
\[
 \|x\|_{\widehat{C_E}}= \|S(x)\|_{\widehat{E}}.
\]
 
\end{theorem}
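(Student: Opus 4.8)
The plan is to prove the two inequalities $\|S(x)\|_{\widehat{E}}\le\|x\|_{\widehat{C_E}}$ and $\|x\|_{\widehat{C_E}}\le\|S(x)\|_{\widehat{E}}$ separately, following the scheme of \thmref{thm:1} but replacing the non-atomic isomorphism of Corollary \ref{cor:isom} by the diagonal (atomic) embedding available for compact operators. For the first, lower, inequality I invoke Lemma \ref{lm:seqsymm}, by which $(E,\|\cdot\|_{\widehat{E}})$ is a normed symmetric sequence space, so that by the matrix-space version of \cite[Theorem 8.7]{KS} the functional $|||x|||=\|S(x)\|_{\widehat{E}}$ is a norm on $C_E$; its triangle inequality rests on the Ky Fan submajorization $\sum_{k=1}^{n}S_k(x+y)\le\sum_{k=1}^{n}\bigl(S_k(x)+S_k(y)\bigr)$ together with the symmetry of $\widehat{E}$. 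Since $|||x|||=\|S(x)\|_{\widehat{E}}\le\norme{S(x)}=\|x\|_{C_E}$ and $\|\cdot\|_{\widehat{C_E}}$ is the largest seminorm on $C_E$ dominated by $\|\cdot\|_{C_E}$, I conclude $\|S(x)\|_{\widehat{E}}\le\|x\|_{\widehat{C_E}}$ for all $x\in C_E$.

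For the reverse inequality I first reduce to $x\ge 0$, exactly as in \thmref{thm:1}. Writing the polar decomposition $x=u\abs{x}$ with $\abs{x}=u^{*}x$, any decomposition $x=\sum_{i=1}^{n}x_i$ gives $\abs{x}=\sum_{i=1}^{n}u^{*}x_i$ with $S(u^{*}x_i)\le S(x_i)$, hence $\|u^{*}x_i\|_{C_E}\le\|x_i\|_{C_E}$; this yields $\|\abs{x}\|_{\widehat{C_E}}\le\|x\|_{\widehat{C_E}}$ and, symmetrically, the opposite estimate, so $\|x\|_{\widehat{C_E}}=\|\abs{x}\|_{\widehat{C_E}}$. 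Because $S(x)=S(\abs{x})$, it suffices to treat $0\le x\in C_E$.

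The key remaining step is the diagonal embedding. As $x\ge 0$ is compact we have $\mu(\infty,x)=0$, and the Schmidt decomposition provides a maximal orthogonal family of rank-one projections $\{p_k\}_{k\ge 1}$, ordered (with projections onto $\Ker x$ appended) so that $x=\sum_{k}S_k(x)p_k$. The map $Va=\sum_{k}a_k p_k$ is a $*$-isomorphism of the sequence algebra into $B(H)$ carrying $c_0$ into $K(H)$, with $S(Va)=\mu(a)$ for every $a$, and in particular $VS(x)=x$. Given $\epsilon>0$, I choose $\{f_i\}_{i=1}^{n}\subseteq E$ with $S(x)=\sum_{i=1}^{n}f_i$ and $\sum_{i=1}^{n}\norme{f_i}\le\|S(x)\|_{\widehat{E}}+\epsilon$. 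By linearity of $V$,
\[
x=VS(x)=\sum_{i=1}^{n}Vf_i,\qquad \|Vf_i\|_{C_E}=\norme{S(Vf_i)}=\norme{\mu(f_i)}=\norme{f_i},
\]
so that $\|x\|_{\widehat{C_E}}\le\sum_{i=1}^{n}\|Vf_i\|_{C_E}=\sum_{i=1}^{n}\norme{f_i}\le\|S(x)\|_{\widehat{E}}+\epsilon$; letting $\epsilon\to 0$ finishes the proof.

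I expect the main obstacle to be precisely the construction and verification of the embedding $V$, namely the identities $VS(x)=x$ and $S(Vf)=\mu(f)$. Since $B(H)$ is atomic, Corollary \ref{cor:isom} is not directly available and must be replaced by this hand-built diagonalization. This substitution in fact works in our favour: because $\mu(\infty,x)=0$ for a compact operator, there is no tail term $\mu(\infty,x)V\chi_{[\tau(r),\infty)}$ and hence no need for the compression estimate and the monotonicity argument (Proposition \ref{prop:1}) used in the function-space case; one obtains the exact equality $x=\sum_i Vf_i$ rather than a mere domination, which shortens the argument considerably. The only point demanding care is that the ordering of the $p_k$ matches the decreasing rearrangement, so that $VS(x)=x$ holds on the nose.
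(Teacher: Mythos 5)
Your proof is correct and takes essentially the same approach as the paper: the lower inequality $\|S(x)\|_{\widehat{E}}\le\|x\|_{\widehat{C_E}}$ is obtained exactly as in the paper (Lemma \ref{lm:seqsymm}, the norm property of $|||\cdot|||$ from \cite{KS}, and maximality of the Peetre seminorm among seminorms dominated by $\|\cdot\|_{C_E}$), and the upper inequality by pushing a near-optimal decomposition of $S(x)$ through a linear, singular-value-preserving embedding $V$ satisfying $VS(x)=x$. The only difference is that the paper obtains $V$ for an arbitrary $x\in C_E$ by citing Arazy \cite[Proposition 1.1]{A} — which makes your polar-decomposition reduction to $x\ge 0$ unnecessary — whereas you build $V$ by hand from the spectral decomposition of a positive compact operator; the two arguments are equivalent, yours being merely a self-contained version of the cited lemma.
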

\begin{proof}
Observe first that in view of  Proposition \ref{lm:seqsymm} and \cite[Corollary 8.8]{KS}, the functional $|||x|||=\|S(x)\|_{\widehat{E}}$, $x\in C_E$, is a norm on $C_E$. Moreover, $|||x|||=\|S(x)\|_{\widehat{E}}\leq \|S(x)\|_E=\|x\|_{C_E}$, $x\in C_E$. Since $\|\cdot\|_{\widehat{C_E}}$ is the strongest semi-norm on $C_E$ for which the latter is true, it follows that for all $x\in C_E$ we have $\|S(x)\|_{\widehat{E}}\leq \|x\|_{\widehat{C_E}}$. 

In order to show the converse inequality let $\epsilon >0$ and $\{a_i\}_{i=1}^n\subseteq E$ be such that $S(x)=\sum_{i=1}^n a_i$ and $\sum_{i=1}^n\|a_i\|_{E}\leq \|S(x)\|_{\widehat{E}}+\epsilon$. By \cite[Proposition 1.1]{A}  there is a $*$-isomorphism $V:E\to C_E$ such that $V S(x)=x$ and $S(V a)=\mu(a)$ for all $a\in E$. Therefore $V$ is an isometry on $E$. Hence $x=VS(x)=\sum_{i=1}^n V a_i$ and $\|x\|_{\widehat{C_E}}\leq \sum_{i=1}^n\|V a_i\|_{C_E}=\sum_{i=1}^n\|a_i\|_{E}\leq \|S(x)\|_{\widehat{E}}+\epsilon$. Consequently, $\|x\|_{\widehat{C_E}}\leq \|S(x)\|_{\widehat{E}}$ and combining with the previous inequality we get
\begin{equation}\label{eq:112}
\|x\|_{\widehat{C_E}}=\|S(x)\|_{\widehat{E}}\ \ \ \ \text{for all $x\in C_E$}.
\end{equation}
\end{proof}
\begin{theorem}
\label{thm:2}
Let $E\in(HC)$ be a quasi-normed  symmetric sequence space  such that $E\subseteq c_0$. Then 
\[
(\widehat{C_E}, \|\cdot\|_{\widehat{C_E}})=(C_{\widehat{E}}, \|\cdot\|_{C_{\widehat{E}}})
\]
 with equality of norms.
\end{theorem}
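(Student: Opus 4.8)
The plan is to replicate, for the unitary matrix spaces, the chain of results that carried us from Theorem \ref{thm:1} to Theorem \ref{thm:main}, now taking Theorem \ref{thm:2p1} as the starting point and using the diagonal $*$-isomorphism $V\colon E\to C_E$ of \cite[Proposition 1.1]{A} in place of Corollary \ref{cor:isom}. A preliminary step is to check that $C_{\widehat E}$ is even defined: by Proposition \ref{prop:symmfun} (with Lemma \ref{lm:seqsymm}) $\widehat E$ is a Banach symmetric sequence space, and since $E\in(HC)$ the extended inclusion $\widehat E\hookrightarrow\ell_\infty$ is injective, so $\widehat E$ is realized as a symmetric subspace of $\ell_\infty$. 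That $\widehat E\subseteq c_0$ then follows from $E\subseteq c_0$: if some $f_n\in E$ satisfied $\|f_n-\one\|_{\widehat E}\to0$, then $\|f_n-\one\|_{\ell_\infty}\to0$, forcing $|f_n(k)|>1/2$ for all $k$ and large $n$ and contradicting $f_n\in c_0$. Hence $\one\notin\widehat E$, so $\widehat E\subseteq c_0$ and $C_{\widehat E}$ is a Banach space by \cite{KS}.

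The inclusion $\widehat{C_E}\subseteq C_{\widehat E}$ mirrors Proposition \ref{prop:final1}: since $E\subseteq\widehat E$ we have $C_E\subseteq C_{\widehat E}$, and Theorem \ref{thm:2p1} gives $\|x\|_{\widehat{C_E}}=\|S(x)\|_{\widehat E}=\|x\|_{C_{\widehat E}}$ for every $x\in C_E$. Thus the Banach envelope $\widehat{C_E}$, being the completion of $(C_E,\|\cdot\|_{\widehat{C_E}})$, is exactly the closure of $C_E$ in $C_{\widehat E}$, so $\widehat{C_E}\subseteq C_{\widehat E}$ with equality of norms on $\widehat{C_E}$. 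Because $\widehat E\hookrightarrow\ell_\infty$ also yields $C_{\widehat E}\hookrightarrow K(H)$ continuously (via $S_1(x)\le C\|S(x)\|_{\widehat E}$), I obtain, exactly as in Lemma \ref{lm:03}, a transfer principle: if $\|x_n-x\|_{C_{\widehat E}}\to0$ with $x_n\in\widehat{C_E}$, then $\{x_n\}$ is Cauchy in $\widehat{C_E}$, its $\widehat{C_E}$-limit agrees with $x$ in operator norm, and hence $x\in\widehat{C_E}$ with $\|x_n-x\|_{\widehat{C_E}}\to0$.

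For the reverse inclusion I would reprove, for $\widehat{C_E}$, the sequence-space analogues of Lemma \ref{lm:idealoper}, Lemma \ref{lm:complitionisom}, Lemma \ref{lm:absolutvalue} and Proposition \ref{prop:opersymm}. First, $\|x\|_{\widehat{C_E}}=\|\,|x|\,\|_{\widehat{C_E}}$ and the ideal property ($0\le x\le y\in\widehat{C_E}$ implies $x\in\widehat{C_E}$ with $\|x\|_{\widehat{C_E}}\le\|y\|_{\widehat{C_E}}$) follow by polar decomposition and the factorization $x=z^*yz$, $\|z\|\le1$, just as in Proposition \ref{prop:1} and Lemma \ref{lm:idealoper}. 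Next, writing $V\colon\widehat E\to C_{\widehat E}$ for the isometric diagonal embedding with $S(Va)=\mu(a)$, any $x\in C_{\widehat E}$ has $S(x)\in\widehat E$, so there are $f_n\in E$ with $\|f_n-S(x)\|_{\widehat E}\to0$ and $Vf_n\in C_E\subseteq\widehat{C_E}$ with $\|VS(x)-Vf_n\|_{C_{\widehat E}}\to0$; the transfer principle then gives $VS(x)\in\widehat{C_E}$. Finally, to pass from the diagonal operator $VS(x)$ back to $x$ I use the partial-isometry equivalence $|x|=w\,(VS(x))\,w^{*}$ (both are positive compacts sharing the eigenvalue sequence $S(x)$): combined with the conjugation-contraction bound and $|x|$-invariance this yields $x\in\widehat{C_E}$ and, through Theorem \ref{thm:2p1}, $\|x\|_{\widehat{C_E}}=\|S(x)\|_{\widehat E}=\|x\|_{C_{\widehat E}}$. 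This gives $C_{\widehat E}\subseteq\widehat{C_E}$ isometrically, and with the first inclusion proves the theorem.

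The main obstacle is this last step, the symmetry lemma transferring membership in $\widehat{C_E}$ between operators with the same singular values: the non-atomic machinery of Corollary \ref{cor:isom} and Proposition \ref{prop:opersymm} is unavailable here (equivalently $\M=B(H)$ is atomic), so the $*$-isomorphism must be replaced by the diagonal embedding of \cite{A}, and the passage from $VS(x)$ to $x$ must be carried out by hand through the diagonalization $|x|=w\,(VS(x))\,w^{*}$, with care that the eigenvalue multiplicities (in particular the kernel dimension) match. I expect the sequence case to be otherwise \emph{simpler} than the function case, since $S(x)\in c_0$ forces $\mu(\infty,x)=0$ and the delicate second case of Proposition \ref{prop:opersymm} disappears entirely.
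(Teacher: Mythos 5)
Your proof is correct, but it takes a genuinely different (and longer) route than the paper on the key inclusion $C_{\widehat{E}}\subseteq\widehat{C_E}$. Your first inclusion and your ``transfer principle'' are exactly what the paper does: by Theorem \ref{thm:2p1}, $\widehat{C_E}$ is the closure of $C_E$ inside the Banach space $C_{\widehat{E}}$ with equality of norms, and a $\|\cdot\|_{C_{\widehat{E}}}$-convergent sequence from $\widehat{C_E}$ has its limit in $\widehat{C_E}$. The divergence is in the final step. The isometry of \cite[Proposition 1.1]{A}, as the paper uses it, is not one fixed diagonal embedding: it is chosen \emph{adapted to the given operator} $x\in C_{\widehat{E}}$, built from the Schmidt expansion of $x$, so that $VS(x)=x$ exactly while still $S(Va)=\mu(a)$ for all $a$. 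With that choice, approximating $S(x)$ by $a_n\in E$ gives $Va_n\in C_E$ with $\|Va_n-x\|_{C_{\widehat{E}}}\to 0$, and the transfer principle ends the proof at once; no passage from a diagonal operator back to $x$ is needed, which is why the paper needs no analogues of Lemma \ref{lm:idealoper}, Lemma \ref{lm:absolutvalue} or Proposition \ref{prop:opersymm} in the $C_E$ setting. You instead fix one diagonal embedding, conclude $VS(x)\in\widehat{C_E}$, and then conjugate back via $|x|=w\,(VS(x))\,w^{*}$ with $w$ a partial isometry carrying the fixed basis onto the eigenvectors of $|x|$, followed by the contraction-ideal property of $\widehat{C_E}$ and the polar decomposition $x=u|x|$. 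This is sound --- in particular the kernel-dimension worry you flag is harmless exactly because $w$ is a partial isometry rather than a unitary, and your auxiliary lemmas all follow from the same approximation-plus-transfer scheme --- but it buys the conclusion at the cost of three lemmas that the paper's $x$-adapted choice of $V$ renders unnecessary. Conversely, your approach has two small merits: it makes explicit the verification that $\widehat{E}\subseteq c_0$ (so that $C_{\widehat{E}}$ is even defined and, by \cite{KS}, complete), a point the paper leaves implicit, and it mirrors the non-atomic machinery of Lemmas \ref{lm:03}--\ref{lm:absolutvalue}, so it would survive in situations where only a basis-fixed embedding is available.
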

\begin{proof}
By Theorem \ref{thm:2p1}, we have that $(C_E,\|\cdot\|_{\widehat{C_E}})=(C_E,\|\cdot\|_{C_{\widehat{E}}})\subseteq (C_{\widehat{E}},\|\cdot\|_{C_{\widehat{E}}})$, and consequently 
\begin{equation}\label{eq:113}
(\widehat{C_E},\|\cdot\|_{\widehat{C_E}})=\overline{(C_E,\|\cdot\|_{C_{\widehat{E}}})}^{\|\cdot\|_{C_{\widehat{E}}}}\subseteq (C_{\widehat{E}},\|\cdot\|_{C_{\widehat{E}}})
\end{equation}
 with $\|x\|_{\widehat{C_E}}=\|x\|_{C_{\widehat{E}}}$ for all $x\in\widehat{C_E}$. 

To show the opposite inclusion, let $x\in C_{\widehat{E}}$, that is $x\in K(H)$ and $S(x)\in \widehat{E}$. Then there exists a sequence $\{a_n\}\subseteq E$ such that $\|a_n-S(x)\|_{\widehat{E}}\to 0$. Again by Arazy's result, let $V:\widehat{E}\to C_{\widehat{E}}$ be an isometry such that $VS(x)=x$ and $S(V a)=\mu(a)$ for all $a\in \widehat{E}$. In particular  we have that $V$ acts from $E$ into $C_E$ and so $Va_n \in C_E$, $n\in\mathbb{N}$. Then 
\[
\|V a_n-x\|_{C_{\widehat{E}}}=\|V a_n-VS(x)\|_{C_{\widehat{E}}}=\|a_n-S(x)\|_{\widehat{E}}\to 0, \ \ \ \ n\to\infty.
\] 
Hence and in view of (\ref{eq:112}) we have that 
\[
\|V a_n-V a_m\|_{\widehat{C_E}}=\|V a_n-Va_m\|_{C_{\widehat{E}}}\to 0 \ \ \ \text{as} \ \ \ n,m\to\infty,
\]
 and so $\{Va_n\}$ is Cauchy in $\|\cdot\|_{\widehat{C_E}}$.
Therefore there is $z\in\widehat{C_E}$ such that $\|Va_n-z\|_{\widehat{C_E}}\to 0$. Now by (\ref{eq:113}), $\|V a_n-z\|_{C_{\widehat{E}}}=\|V a_n-z\|_{\widehat{C_E}}\to 0$. Hence $x=z\in\widehat{C_E}$ and 
\[
\|x\|_{\widehat{C_E}}=\lim_{n\to\infty}\|V a_n\|_{\widehat{C_E}}=\lim_{n\to\infty}\|V a_n\|_{C_{\widehat{E}}}=\|x\|_{C_{\widehat{E}}}.
\]

\end{proof}

We conclude the paper with the corollaries on Banach envelopes of noncommutative $L_p\Mtau$ spaces and Schatten classes $C_p=\{x\in K(H):\, S(x)\in \ell_p\}$, $0<p<1$. They follow from Theorems \ref{thm:main} and \ref{thm:2}, as the spaces $L_p$ and $\ell_p$, $0<p<1$, are order continuous and therefore satisfying $(HC)$.
\begin{corollary}
 Let $\M$ be a non-atomic, semifinite von Neumann algebra with a faithful, normal, $\sigma$-finite trace $\tau$. For $0<p<1$ we have that
  \[
L_p\Mtau\widehat{\phantom{x}}=\{0\}.
 \]

\end{corollary}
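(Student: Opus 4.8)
The plan is to realize $L_p\Mtau$ as a noncommutative symmetric space and reduce the claim to the triviality of a commutative Banach envelope. First I would observe that $L_p\Mtau$ is exactly $\nonsp$ for the commutative symmetric function space $E=L_p[0,\tauone)$, since $\mu(x)\in L_p$ is equivalent to $\tau(\abs{x}^p)<\infty$. For $0<p<1$ the space $L_p[0,\tauone)$ is order continuous, so Proposition \ref{prop:starorder} places it in the class $(HC)$, and Theorem \ref{thm:main} then applies to give
\[
\left(\widehat{L_p\Mtau},\|\cdot\|_{\widehat{L_p\Mtau}}\right)=\left(\widehat{E}\Mtau,\|\cdot\|_{\widehat{E}\Mtau}\right),\qquad E=L_p[0,\tauone),
\]
with equality of norms. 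Thus everything comes down to identifying $\widehat{E}=\widehat{L_p[0,\tauone)}$.

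The core step is to verify that this commutative Banach envelope is trivial, $\widehat{L_p[0,\tauone)}=\{0\}$. Here I would use Peetre's formula $\|f\|_{\widehat{L_p}}=\|f\|^*$ recorded in the Preliminaries (for $X=L_p$). Fixing $f\in L_p$ with $\|f\|_{L_p}=1$ and using that $m$ is non-atomic, for each $n$ I can split $\supp f$ into $n$ disjoint pieces equalizing the mass of $\abs{f}^p$, obtaining $f=\sum_{i=1}^n f_i$ with $\|f_i\|_{L_p}^p=1/n$, whence $\|f_i\|_{L_p}=n^{-1/p}$. Then
\[
\|f\|^*\leq \sum_{i=1}^n \|f_i\|_{L_p}=n^{\,1-1/p}\xrightarrow[n\to\infty]{}0,
\]
since $1-1/p<0$. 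By homogeneity $\|f\|_{\widehat{L_p}}=0$ for every $f$, so $\widehat{L_p[0,\tauone)}=\{0\}$.

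Finally I would combine the two steps: $\widehat{L_p\Mtau}=\widehat{E}\Mtau$ with $\widehat{E}=\{0\}$, and since $\mu(x)=0$ forces $x=0$, the space $\{0\}\Mtau$ collapses to $\{0\}$, establishing $L_p\Mtau\widehat{\phantom{x}}=\{0\}$. The only nonroutine point is the splitting estimate, which must be carried out uniformly whether $\tauone<\infty$ or $\tauone=\infty$; in both cases the non-atomicity of Lebesgue measure on $[0,\tauone)$ supplies the equimeasurable decomposition, so no separate argument for the infinite interval is needed. As a shortcut, since $\|\cdot\|_{\widehat{L_p}}\equiv 0$, one could bypass the $(HC)$ machinery altogether and invoke Theorem \ref{thm:1} directly, which already yields $\|x\|_{\widehat{L_p\Mtau}}=\|\mu(x)\|_{\widehat{L_p}}=0$ for every $x$.
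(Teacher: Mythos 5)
Your proof is correct and takes essentially the same route as the paper: $L_p[0,\tauone)$ is order continuous, hence in $(HC)$ by Proposition \ref{prop:starorder}, and Theorem \ref{thm:main} reduces the claim to $\widehat{L_p[0,\tauone)}=\{0\}$, a fact the paper simply cites from \cite{KPR} in the Preliminaries while you re-derive it via the standard non-atomic splitting estimate $\sum_{i=1}^n\|f_i\|_{L_p}=n^{1-1/p}\to 0$. Your closing shortcut is also sound: Theorem \ref{thm:1} needs no $(HC)$ hypothesis and already yields $\|x\|_{\widehat{L_p\Mtau}}=\|\mu(x)\|_{\widehat{L_p}}=0$ for every $x$, so the envelope is the completion of a space with identically vanishing seminorm, namely $\{0\}$.
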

\begin{corollary}\cite{naw}
For $0<p<1$ it follows that
\[
\widehat{C_p}=C_1\text{ with } \|x\|_{\widehat{C_p}}=\|S(x)\|_1, \, x\in \widehat{C_p}.
\]
\end{corollary}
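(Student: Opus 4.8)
The plan is to derive this as a direct specialization of Theorem \ref{thm:2} to the case $E=\ell_p$. First I would verify that $\ell_p$, $0<p<1$, satisfies the hypotheses of that theorem: it is a quasi-normed symmetric sequence space, every $p$-summable sequence tends to zero so that $\ell_p\subseteq c_0$, and $\ell_p$ is order continuous, whence $\ell_p\in(HC)$ by Proposition \ref{prop:starorder}. Since $C_p=C_{\ell_p}$ by definition, Theorem \ref{thm:2} then applies and gives
\[
(\widehat{C_p},\|\cdot\|_{\widehat{C_p}})=(C_{\widehat{\ell_p}},\|\cdot\|_{C_{\widehat{\ell_p}}})
\]
with equality of norms.

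The second step is to identify the envelope of the base sequence space. As recalled in the preliminaries, $\widehat{\ell_p}=\ell_1$ for $0<p<1$ \cite{KPR}, and this identification is isometric: splitting $a=\sum_n a(n)e_n$ coordinatewise gives $\|a\|_{\widehat{\ell_p}}\leq\sum_n|a(n)|=\|a\|_1$, while the elementary inequality $\|a\|_1\leq\|a\|_p$ valid for $0<p<1$, together with the maximality of $\|\cdot\|_{\widehat{\ell_p}}$ among seminorms dominated by $\|\cdot\|_p$, yields the reverse bound $\|a\|_1\leq\|a\|_{\widehat{\ell_p}}$. Hence $\widehat{\ell_p}=\ell_1$ with coinciding norms, so $C_{\widehat{\ell_p}}=C_{\ell_1}=C_1$.

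Combining the two steps gives $\widehat{C_p}=C_{\widehat{\ell_p}}=C_1$, and for $x\in\widehat{C_p}$ the norm equality of Theorem \ref{thm:2} reads $\|x\|_{\widehat{C_p}}=\|x\|_{C_{\widehat{\ell_p}}}=\|S(x)\|_{\widehat{\ell_p}}=\|S(x)\|_1$, as claimed. There is essentially no obstacle here beyond checking the hypotheses of Theorem \ref{thm:2} and invoking the classical identification $\widehat{\ell_p}=\ell_1$; the only point requiring a word of care is that the latter holds \emph{isometrically} rather than merely isomorphically, which is precisely what makes the norm formula $\|S(x)\|_1$ come out with constant one.
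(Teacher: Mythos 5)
Your proposal is correct and follows the paper's own route: the paper likewise deduces the corollary from Theorem \ref{thm:2} by noting that $\ell_p$, $0<p<1$, is order continuous and hence in $(HC)$, and then invokes the classical identification $\widehat{\ell_p}=\ell_1$ from \cite{KPR}. Your extra verification that this identification is isometric (via coordinatewise splitting and the maximality of $\|\cdot\|_{\widehat{\ell_p}}$ among seminorms dominated by $\|\cdot\|_p$) is a detail the paper leaves to the citation, but it is the same argument in substance.
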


\begin{center}
{\bf Acknowledgement}\\
\end{center} 
\textit{We are grateful to the reviewer for his/hers insightful  and detailed report of the paper, which allowed us to improve significantly the final version of the manuscript. }

\bibliographystyle{amsplain}

\end{document}